\newcommand{\FS}{{\mathbb F}}
\newcommand{\F}{{\mathcal F}}
\newcommand{\PN}{{\mathbb P}}
\newcommand{\PF}{{\mathbb F}}
\newcommand{\rk}{{\rm rk}}
\newcommand{\Pic}{{\rm Pic}}
\newcommand{\lra}{\longrightarrow}
\newcommand{\KC}{{\mathbb C}}
\newcommand{\KZ}{{\mathbb Z}}
\newcommand{\KQ}{{\mathbb Q}}
\newcommand{\Bs}{{\rm Bs}}
\newcommand{\Bl}{{\rm Bl}}
\newcommand{\exc}{{\rm exc}}
\newcommand{\X}{{\mathcal X}}
\newcommand\sE{{\mathcal E}}
\newcommand\sF{{\mathcal F}}
\newcommand\sI{{\mathcal I}}
\newcommand\sO{{\mathcal O}}
\newcommand\bZ{{\mathbb Z}}
\newcommand\bQ{{\mathbb Q}}
\newcommand\bN{{\mathbb N}}
\newcommand\bP{{\mathbb P}}
\newcounter{lemma}
\newtheorem{lemma1}[lemma]{\setcounter{equation}{0}}
\newenvironment{lemma}{\begin{lemma1}{\bf Lemma.}}{\end{lemma1}}
\newenvironment{theorem}{\begin{lemma1}{\bf Theorem.}}{\end{lemma1}}
\newenvironment{theorem2}[1]{\begin{lemma1}{\bf Theorem [#1].}}{\end{lemma1}}
\newenvironment{proposition}{\begin{lemma1}{\bf Proposition.}}{\end{lemma1}}
\newenvironment{proposition2}[1]{\begin{lemma1}{\bf Proposition [#1].}}{\end{lemma1}}
\newenvironment{corollary}{\begin{lemma1}{\bf Corollary.}}{\end{lemma1}}
\newenvironment{remark}{\begin{lemma1}{\bf Remark.}\rm}{\end{lemma1}}
\newenvironment{notation}{\begin{lemma1}{\bf Notation.}}{\end{lemma1}}
\newenvironment{setup}{\begin{lemma1}{\bf Setup.}}{\end{lemma1}}
\title {Threefolds with big and nef anticanonical bundles II}
\date{Version vom 15.10.07}
\author[P. Jahnke]{Priska Jahnke} 
\address{P. Jahnke - 
Mathematisches Institut - Universit\"at Bayreuth - D-95440 Bayreuth, Germany} 
\email{priska.jahnke@uni-bayreuth.de}
\author[Th. Peternell]{Thomas Peternell}
\address{Th. Peternell - 
Mathematisches Institut - Universit\"at Bayreuth - D-95440 Bayreuth, Germany} 
\email{thomas.peternell@uni-bayreuth.de}
\author[I.Radloff]{Ivo Radloff} 
\address{I.Radloff - 
Mathematisches Institut - Universit\"at Bayreuth - D-95440 Bayreuth, Germany} 
\email{ivo.radloff@uni-bayreuth.de}
\begin{document}

\maketitle
\tableofcontents

\section{Introduction}
This is the second part of our classification of smooth complex projective threefolds $X$ whose anticanonical bundles $-K_X$ are big and nef, but not ample. 
In other words, we classify {\it almost Fano threefolds}. 
In the first part [JPR05] we classified those $X$ with Picard number $\rho (X) = 2$, whose anticanonical morphism contracts a divisor. To be more
specific, recall that some multiple $-mK_X$ is generated by global sections (usually $m = 1 $) inducing a morphism with connected fibers
$$ \psi: X \to X'$$ 
which we call the {\it anticanonical morphism} of $X$. Since $-K_X$ is big but not ample, $\psi$ is birational, but not an isomorphism. 
\vskip .2cm \noindent
In this paper we are concerned with the case $\rho(X) = 2$ and $\psi$ {\it small}, i.e., $\psi$ contracts finitely many curves and
nothing else. The singular variety $X'$ is Fano with terminal Gorenstein singularities, but it is not $\bQ-$factorial. By [Na97], $X'$ admits a smoothing $(X_t)$, 
and $\rho (X_t) = 1$ by [JR06a]. 
In this situation $X$ can be flopped, i.e., there is another almost Fano threefold $X^+$ with small anticanonical morphism $\psi^+: X^+ \to X'$
which henceforth is isomorphic to $X$ in codimension 1. This is an important tool to study the original threefold $X.$ Another main ingredient is the
unique Mori contraction $\phi: X \to Y$, which is somehow perpendicular to $\psi.$ Since $\rho(X^+) = 2$, too, also $X^+$ carries a unique
Mori contraction $\psi^+: X^+ \to Y^+$, and we study the interplay of the two contractions.
\vskip .2cm \noindent
Our classification results are resumed in the lists in the appendix. We classify according to the types of $\phi$ and $\phi^+$: del Pezzo fibrations over
 $\bP_1$ (including projective and quadric bundles), conic bundles over $\bP_2$, and birational contractions. However, due to the complexity of the 
problem and the length of the paper, we shall not consider the case that both contractions are birational, and hope to come back to that case later. 
\vskip .2cm \noindent In many cases we explicitly write down examples, but in some circumstances this is very delicate and must be left open. The reason for that
is twofold. First, it is difficult to construct explicity del Pezzo fibrations with relative Picard number $2$ and $K_F^2 = 5,6$ (with $F$ the general
fiber) and second, in the case of conic bundles $X \subset \bP(E)$, it is possible to write down the potential rank 3 bundles $E$ over $\bP_2,$ but in order
to construct $X$, it is necessary to work out ``smooth'' sections in a certain twist of $S^2(E)$ which is difficult, too. 
\vskip .2cm \noindent 
We would like to thank the DFG-Schwerpunkt ``Globale Methoden in der komplexen Geometrie'' and the DFG-Forschergruppe ``Classification of algebraic surfaces
and compact complex manifolds'' for the support of our project.

\section{Preliminaries}

\begin{notation} {\rm As in Part I we consider a smooth projective threefold $X$ with $-K_X$ big and nef.  We always assume that $X$ is {\it not Fano} 
and say that $X$ is {\it almost Fano}. 
Then $-mK_X$ will be spanned for suitable large $m.$
Throughout this paper $\psi: X \to X'$ will denote the
morphism (with connected fibers) associated with $\vert {-}mK_X \vert$ {\bf and $\psi$ is assumed to be small}, therefore contracts 
only finitely many smooth rational curves and nothing else. 
Notice that $X'$ is Gorenstein Fano threefold with only terminal singularities and $\rho(X') = 1$, but $X'$ is not
$\bQ-$factorial.}
\end{notation} 

By \cite{Kollar}, there exists the following flop--diagram 
\begin{equation} \label{flopdiag}
\xymatrix{X \ar@{-->}[rr]^{\chi} \ar[dr]^{\psi} \ar[d]_{\phi} & & X^+
\ar[dl]_{\psi^+} \ar[d]^{\phi^+}\\
Y & X' & \tilde{Y}}
\end{equation}
where the rational map $\chi$ is an isomorphism outside the
exceptional locus of $\psi$ and $X^+$ is again a smooth almost
Fano threefold with anticanonical map $\psi^+$ and extremal
contraction $\phi^+$. Note that $\phi$ and $\phi^+$ are not
necessarily of the same type. Our assumption $\rho(X) = 2$ implies that $\chi$ does
not depend on the choice of some $\psi$--negative divisor in $X$. To be more precise, we have the following

\begin{proposition} Let $D$ be any divisor which is not $\psi-$nef, i.e. $-D$ is $\psi-$ample. 
Then the $D-$flop of $\psi$ exists,
i.e. a small birational map $\psi^+: X^+ \to X'$ such that the strict transform $\tilde D \subset X^+$ is $\psi^+-$ample. Moreover $X^+$ is smooth with $-K_{X^+}$ big and nef and
$$\rho(X^+) = 2,$$
$$({-}K_X)^3 = ({-}K_{X^+})^3, $$ 
$$h^0(\sO_X(D)) = h^0(\sO_{X^+}(\tilde D)),$$
the same being true for the strict transform of any divisor on $X.$ 
Finally $\psi^+$ does not depend on $D.$
\end{proposition}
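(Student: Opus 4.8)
The plan is to produce $X^{+}$ by the explicit $D$--flop construction of \cite{Kollar}, to read the numerical equalities off the fact that $\psi$ and $\psi^{+}$ are small and crepant, and to deduce the independence of $D$ from the dimension count $\dim N^{1}(X/X')_{\bR}=\rho(X)-\rho(X')=1$.

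\emph{Existence.} Since $|{-}mK_X|$ induces $\psi$ and $\psi$ is small, $\psi$ has no exceptional divisor, so $K_X=\psi^{*}K_{X'}$ is numerically $\psi$--trivial and $\psi$ is a flopping contraction of a threefold with terminal Gorenstein singularities. If $-D$ is $\psi$--ample, then by \cite{Kollar} the $D$--flop exists: one may take
\[
  X^{+}\;=\;\mathrm{Proj}_{X'}\ \bigoplus_{n\ge 0}\psi_{*}\sO_X(nD),
\]
the relevant $\sO_{X'}$--algebra being finitely generated in dimension three; then $\psi^{+}\colon X^{+}\to X'$ is again small with $K_{X^{+}}=(\psi^{+})^{*}K_{X'}$, and by construction the strict transform $\tilde D$ is $\psi^{+}$--ample. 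The dimension--three local structure theory of terminal Gorenstein flopping contractions also yields that $X^{+}$ is smooth with terminal Gorenstein singularities and that the Picard number is unchanged, so $\rho(X^{+})=\rho(X)=2$; this is exactly the input already used for \eqref{flopdiag}. Finally $-K_{X^{+}}=(\psi^{+})^{*}({-}K_{X'})$ is big (a birational pull--back of a big divisor) and nef, but not ample since the small non--isomorphism $\psi^{+}$ contracts a curve, so $X^{+}$ is almost Fano.

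\emph{Numerical invariants.} Both $-K_X$ and $-K_{X^{+}}$ are pull--backs of the Cartier divisor $-K_{X'}$ along birational morphisms, hence $({-}K_X)^{3}=({-}K_{X'})^{3}=({-}K_{X^{+}})^{3}$. For the cohomological statement, let $V\subset X'$ be the open set over which both $\psi$ and $\psi^{+}$ are isomorphisms; its complement is finite, so $\psi^{-1}(V)$ and $(\psi^{+})^{-1}(V)$ have complements of codimension $\ge 2$ in $X$ and $X^{+}$ and are identified by $\chi$. For any divisor $B$ on $X$ with strict transform $\tilde B$ on $X^{+}$ the reflexive sheaves $\sO_X(B)$ and $\sO_{X^{+}}(\tilde B)$ restrict to corresponding sheaves on $\psi^{-1}(V)\cong(\psi^{+})^{-1}(V)$, and since $X$, $X^{+}$ are smooth, restriction is an isomorphism on global sections:
\[
  H^{0}\big(X,\sO_X(B)\big)\;=\;H^{0}\big(\psi^{-1}(V),\sO(B)\big)\;=\;H^{0}\big(X^{+},\sO_{X^{+}}(\tilde B)\big).
\]
Taking $B=D$ gives the asserted equality, and the argument applies verbatim to the strict transform of any divisor.

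\emph{Independence of $D$.} Pull--back $\psi^{*}\colon N^{1}(X')_{\bR}\hookrightarrow N^{1}(X)_{\bR}$ is injective (restrict to a curve not contracted by $\psi$ and use $\rho(X')=1$), so $N^{1}(X/X')_{\bR}$ is one--dimensional. The $D$--flop is unchanged if $D$ is replaced by a positive multiple (pass to a Veronese subalgebra) or by $D+\psi^{*}L$ with $L$ a divisor on $X'$ (this twists the graded algebra by a pull--back and does not change $\mathrm{Proj}_{X'}$); hence $X^{+}$ depends only on the class of $D$ in the line $N^{1}(X/X')_{\bR}$, and in fact only on which of its two rays that class spans. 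The hypothesis that $-D$ is $\psi$--ample pins this down to the ray opposite the $\psi$--ample one, for every admissible $D$. Spelled out: if $D_{1},D_{2}$ are both not $\psi$--nef, then $X^{+}_{D_{1}}$ and $X^{+}_{D_{2}}$ are small $\bQ$--factorial models of $X'$ whose relative ample cones, read inside the common one--dimensional space $N^{1}(\,\cdot\,/X')_{\bR}$, are the same ray; by the uniqueness of $\bQ$--factorial small models with prescribed relative ample class they are isomorphic over $X'$. In particular $X^{+}$ and $\psi^{+}$ are those appearing in \eqref{flopdiag}.

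\emph{Main obstacle.} The second and third steps are formal, being facts about small birational maps together with the count $\dim N^{1}(X/X')_{\bR}=1$. All the content sits in the first step --- the existence, finite generation and, above all, smoothness of the flop --- which we do not reprove but take from the local theory of terminal Gorenstein flopping contractions in dimension three \cite{Kollar} (the flopping curves are smooth rational curves with normal bundle $\sO({-}1)^{\oplus 2}$, $\sO\oplus\sO({-}2)$ or $\sO(1)\oplus\sO({-}3)$, and the flop stays smooth in each case). Nothing beyond this is expected to be needed.
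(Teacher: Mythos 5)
Your proposal is correct and follows essentially the same route as the paper: existence, smoothness and preservation of the Picard number are quoted from Koll\'ar's flop paper, the equality $({-}K_X)^3=({-}K_{X^+})^3$ comes from both anticanonical divisors being crepant pull-backs of $-K_{X'}$, the $H^0$-equality is the standard Hartogs argument for maps that are isomorphisms in codimension one, and the independence of $D$ is the one-dimensionality of $N^1(X/X')_{\bR}$ forced by $\rho(X)=2$, $\rho(X')=1$. You merely spell out details (the $\mathrm{Proj}$ construction, the normal-bundle trichotomy for the flopping curves) that the paper leaves implicit.
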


\proof Let $l$ be a curve contracted by $\psi.$ Then $D \cdot l < 0,$
hence the $D-$flop exists by \cite{Kollar}. Also the smoothness and the
statement on the Picard number follows from \cite{Kollar}. Since $\psi^+$ is
small and since $X'$ has only terminal singularities, we have $K_{X^+} = (\psi^+)^*(K_{X'}), $ hence $-K_{X^+}$ is big and nef and also $({-}K_X)^3 = ({-}K_{X^+})^3.$ The 
$H^0-$statement is clear, too. Finally $\psi^+$ does not depend on
$D$, since $\rho(X) = 2$ so that two divisors $D$ and $D'$  which are negative on the curves $l_{\psi}$ coincide up to multiples in a neighborhood of the exceptional locus of $\psi.$  
\qed

\begin{notation} {\rm  The $D-$flop as in \eqref{flopdiag} will always denoted $\psi^+: X^+ \to X';$ if $E$ is a divisor on $X,$ then the strict transform 
of $E$ will be denoted by $\tilde E.$ \\
On the level of sheaves, let $L$ be the pull back to $X$ of the ample generator on $Y;$ then we set
$$ \tilde L = (\psi^+)^*(\psi_*(L)^{**}). $$
Notice that $-\tilde K_X = - K_{X^+}$. 
The induced birational map $X \dasharrow X^+$ is called $\chi.$ 
Since $\rho(X^+) = 2$ and since $-K_{X^+}$ is big and nef but not ample,
$X^+$ carries a unique contraction which is called  $\phi^+: X^+ \to
Y^+.$ Then as above we consider the pull back $L^+$ of an ample generator on $Y^+$ and
define $\tilde L^+$ a line bundle on $X$.}
\end{notation}

\

A {\em smoothing} of a singular Fano threefold $X'$ is a flat family
 \[\X \lra \Delta\]
over the unit disc, such that $\X_0 \simeq X'$ and $\X_t$ is a smooth
Fano threefold for $t \not= 0$. Namikawa has shown in \cite{Namikawa}
that a smoothing always exists if $X'$ has only terminal Gorenstein
singularities, not necessarily $\KQ$--factorial: In this case the
Picard groups of $X'$ and the general $\X_t$ are isomorphic (over
$\KZ$) by \cite{smoothing}. 

\begin{proposition2}{\cite{Namikawa}, \cite{smoothing}} \label{antmodel}
Let $X'$ be a Gorenstein Fano threefold with only
  terminal singularities (not necessarily $\bQ-$factorial). Then $X' $
  has a smoothing $\X \to \Delta$ and $\Pic(X') \simeq \Pic(\X_t)$. In particular, $X'$ and $\X_t$ have the
same Picard number and the same index.
\end{proposition2}

\begin{proposition}\label{basepoints}
The anticanonical bundle $-K_{X'}$ and therefore $-K_X$ are generated by global sections unless $X'$ is a deformation of the Fano threefold
$V_2$ and arises as complete intersection of a quadric cone and a
general sextic in the weighted projective space $\bP(1^4,2,3).$ In
this case, there exists a small resolution $X \to X'$ with $\rho(X) =
2$, $({-}K_X)^3 = 2$ and $X \simeq X^+$ both admit a del Pezzo
fibration with $K_F^2 = 1$.
\end{proposition}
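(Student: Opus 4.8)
The plan is to show that $-K_{X'}$ fails to be globally generated only in a single exceptional case, and then to analyze that case by hand. First I would recall that for a Gorenstein Fano threefold $X'$ with terminal singularities the anticanonical base-point freeness is governed by the same numerics as in the smooth case: by Proposition \ref{antmodel} the index and $(-K_{X'})^3$ are preserved under the smoothing $\X \to \Delta$, so $(-K_{X'})^3 = (-K_{\X_t})^3 =: 2g-2$ where $g$ is the genus of the smooth Fano $\X_t$ of Picard number $1$. The classification of smooth Fano threefolds of Picard number $1$ (Iskovskih) shows that $|-K_{\X_t}|$ is base-point free except precisely for $V_1$, the threefold with $(-K)^3 = 2g-2$ minimal. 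The point to establish is that the same dichotomy persists for the singular, non-$\bQ$-factorial $X'$: one runs the standard argument on $|{-}K_{X'}|$ (Reider-type / Shokurov's nonvanishing together with Kawamata--Viehweg vanishing on the terminal Gorenstein $X'$, or semicontinuity of $h^0(-K)$ along $\Delta$ combined with the fact that the generic anticanonical section is a smooth K3) to conclude that $-K_{X'}$ is spanned exactly when $(-K_{X'})^3 \geq 4$, i.e. $g \geq 3$, and that the only obstruction in the remaining case is the degree-$2$ one.

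Next, in the exceptional case $(-K_{X'})^3 = 2$, I would identify $X'$ concretely. The smooth model $\X_t$ is the Fano threefold $V_2$ of index $1$, degree $2$, which is a double cover of $\bP_3$ branched in a quartic; equivalently it is a hypersurface of degree $6$ in $\bP(1^4,2)$. For the singular $X'$, non-$\bQ$-factoriality forces the anticanonical ring to acquire an extra generator: writing $R(X', -K_{X'}) = \bigoplus_m H^0(-mK_{X'})$, a Riemann--Roch / plurigenus computation (using Gorenstein terminal, $\chi(\sO_{X'}) = 1$, and the known Hilbert series of an index-$1$ degree-$2$ Fano) shows that the generators sit in degrees $1^4, 2, 3$ with a single relation in degree $6$ coming from the quadric cone, so $X' = \{q = 0\} \cap \{f_6 = 0\} \subset \bP(1^4,2,3)$ with $q$ the rank-$4$ quadric (a quadric cone, reflecting the extra non-factorial node-like locus) and $f_6$ general. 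This is exactly the model asserted.

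Finally I would produce the small resolution and verify its properties. The quadric cone $\{q=0\} \subset \bP(1^4,2,3)$ has an ordinary singularity along the vertex stratum; resolving $X'$ by a small blow-up separating the two rulings of the cone (equivalently, taking the obvious weighted-blow-up that is small on the complete intersection) yields $X \to X'$ contracting finitely many rational curves, and by Proposition~1 (our flop analysis) $X$ is smooth, almost Fano, with $\rho(X) = 2$ and $(-K_X)^3 = (-K_{X'})^3 = 2$. The two rulings of the quadric cone give the two small resolutions, which are interchanged by the flop, so $X \simeq X^+$. To exhibit the del Pezzo fibration: the projection $\bP(1^4,2,3) \dasharrow \bP_1$ onto a suitable coordinate pencil (the pencil cut out by the two linear forms appearing in the two rulings of $q$) restricts to a morphism $\phi: X \to \bP_1$ whose general fiber $F$ is the intersection of a quadric and a sextic in the remaining weighted projective $3$-space $\bP(1^2,2,3)$ — a degree-$1$ del Pezzo surface, i.e. $K_F^2 = 1$. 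The main obstacle is the middle step: pinning down that the anticanonical ring of the \emph{non-}$\bQ$-factorial $X'$ has precisely this presentation (degrees $1^4,2,3$, one relation in degree $6$, with $q$ of corank $1$) rather than some other degeneration — this requires carefully combining the Picard-group rigidity of Proposition \ref{antmodel}, the plurigenus/Hilbert-series computation, and an argument that the non-factoriality is concentrated exactly along the vertex of the quadric cone; once the model is fixed, the resolution and the del Pezzo fibration follow by direct inspection of the weighted projective geometry.
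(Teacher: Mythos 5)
There is a genuine gap in the first (and decisive) step. You propose to reduce to the exceptional case by a numerical criterion: ``$-K_{X'}$ is spanned exactly when $(-K_{X'})^3\ge 4$, and the only obstruction in degree $2$ is the $V_2$ one.'' This dichotomy is false, and the paper itself contains counterexamples: Proposition~\ref{dpdp} (Case II) and Table~\ref{tdpdp} list several threefolds with $({-}K_X)^3=2$ and $|{-}K_X|$ base point free (e.g.\ the general divisor of bidegree $(3,2)$ in $\PN_3\times\PN_1$, which is hyperelliptic over $\PN_3$). Conversely, base-point-freeness of $|{-}K_{\X_t}|$ on the smoothing gives you nothing by semicontinuity, since freeness is not a closed condition in families --- indeed in the exceptional case the general fibre $\X_t\simeq V_2$ has free anticanonical system while the central fibre $X'$ does not. (Also, for smooth Fano threefolds of Picard number one $|{-}K|$ is always free, so there is no smooth ``$V_1$ exception'' to transport.) The actual input here is the classification of Gorenstein terminal Fano threefolds with base points in $|{-}K|$, which the paper simply quotes from [JR06b]; your attempt to rederive it from the degree alone cannot work.

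The second step has a related problem: you assert that non-$\KQ$-factoriality forces the anticanonical ring to acquire an extra generator, hence the $\bP(1^4,2,3)$ presentation. But almost every $X'$ in this paper is non-$\KQ$-factorial with $-K_{X'}$ spanned and with the ``expected'' anticanonical model; non-factoriality by itself changes neither $h^0(-mK)$ nor the Hilbert series (these are computed by Riemann--Roch from $(-K)^3$ and are deformation invariant). So the Hilbert-series computation cannot distinguish the exceptional $X'$ from the free ones of the same degree; again one needs the geometric classification of [JR06b]. Your final step --- the two small resolutions coming from the two rulings of the rank-$4$ quadric, interchanged by the flop, with del Pezzo fibrations of degree $1$ --- is essentially the right picture and matches the paper's construction (which realizes $X'$ as a double cover of the cone over $\PN_1\times\PN_1$ and factors the resolution $\PN(\sO\oplus\sO(2,2))\to\widehat Q$ through the two partial contractions $Z_i\to\PN_1$ whose fibres are quadric cones, the induced double covers being degree-$1$ del Pezzo surfaces). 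But without a correct argument for why this is the \emph{only} case where $|{-}K_{X'}|$ has base points, the proof is incomplete.
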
 

\begin{proof}
Assume $X'$ is a Gorenstein, not $\KQ$-factorial Fano threefold with terminal
singularities, such that $|{-}K_{X'}|$ is not base point free. By
\cite{JR}, $X' \subset \PN(1^4, 2,3)$ is a complete intersection of a
quadric cone $Q$ in the first four
variables and a general sextic. This means $X'$ is a double cover of
the cone over the quadric $Q \simeq \PN_1 \times \PN_1 \hookrightarrow
\PN_8$ embedded by the system $|(2,2)|$, i.e. we have
 \[\xymatrix{V \ar[rr]^{\hspace{-1cm}2:1} \ar[d]^q & & \PN(\sO \oplus \sO(2,2))
   \ar[d]^p \ar[r]^{\pi}
   & Q \simeq \PN_1 \times \PN_1\\
             X' \ar[rr]^{2:1} & & \widehat{Q}&}\] 
Here $V$ is a smooth almost Fano threefold with $\rho(V) = 3$, the
anticanonical divisor $-K_V$ being the pull back of
$\pi^*\sO(1,1)$. The double cover $V \to \PN(\sO \oplus \sO(2,2))$ is
ramified along the minimal section 
 \[Q_0\simeq \PN_1 \times \PN_1\] 
of the projective bundle and a general cubic, disjoint from
$Q_0$. The vertical maps $p$ and $q$ contract $Q_0$ and its reduced
inverse image $E$ in $V$ to a point. We have 
 \[K_V = q^*K_{X'} + E\]
and $E|_E = \sO(-1,-1)$ by adjunction formula.

\vspace{0.2cm}

Let $\zeta$ be the tautological line bundle on $\PN(\sO \oplus
\sO(2,2))$ and $F_1, F_2 \simeq \Sigma_2$ the pull-back of the two rulings of $Q$. The
contraction $p$ of $Q_0$ to a point factors over the blowdown of $Q_0$
to $\PN_1$ along the two rulings, defined by $|\zeta + F_i|$:
 \[\xymatrix{\PN(\sO \oplus \sO(2,2)) \ar[d]^{p_i} \ar[r]^{\pi}
   & Q \simeq \PN_1 \times \PN_1\ar[d]\\
    Z_i \ar[d] \ar[r] & \PN_1\\
   \widehat{Q} &}\] 
By construction, the maps $p_i$ are crepant, hence $Z_1$ and $Z_2$ are Gorenstein almost Fano threefolds with
canonical singularities along the image of $Q_0$. Let
 \[V \stackrel{q_i}{\lra} X_i \stackrel{\psi_i}{\lra} X'\]
be the induced factorization of $q: V \to X'$, i.e., $X_i$ is a double
cover of $Z_i$. Then $q_i$ contracts $E$ along a ruling to $\PN_1$,
but here $K_V$ is negativ on the fibers, hence $X_1$ and $X_2$ are smooth
almost Fano threefolds with $\rho(X_i) = 2$. The anticanonical map
$\psi_i: X_i \to X'$ is small with exceptional locus a single curve,
namely $q_i(E) = \mathrm{Bs}|{-}K_{X_i}| \simeq \PN_1$.

\vspace{0.2cm}
 
On the fibers $F_i \simeq \Sigma_2$, the map $p_i$ contracts the
minimal section, i.e., $Z_i \to \PN_1$ has general fiber the quadric
cone. The induced covering gives a smooth del Pezzo surface of degree
$1$. The flop diagram hence is
 \[\xymatrix{X_1 \ar[d] \ar[dr]^{\psi_1} \ar@{<-->}[rr]^{\chi} & & X_2
 \ar[d] \ar[dl]_{\psi_2}\\
     \PN_1 & X' & \PN_1}\]
with $X = X_1 \simeq X_2 = X^+$, but $\chi$ not an isomorphism.
\end{proof}

\vskip .2cm \noindent

\noindent {\bf From now on we shall assume for the rest of the paper that $-K_X$ is spanned.}

\vskip .2cm \noindent 

\begin{notation} {\rm As in Part I, we let $\mu : X' \to W$ be the finite part of the map associated with $\vert {-}K_X \vert.$ 
We know (see e.g. Part I) that either
$\mu $ is an isomorphism or that $\mu$ has degree $2$, in which case $X'$ is hyperelliptic. Furthermore as usual we 
let $r_X = r_{X'}$ be the index of $X'$ and define the genus $g$
of $X$ or $X'$ by 
$$ 2g-2 = {{({-}K_X)^3}\over {2}}. $$ }
\end{notation}

\vspace{0.2cm}

By Bertini's classifiaction of varieties of minimal degree
(\cite{Bertini}), $W$ is either $\PN_3$, a quadric, the Veronese cone
or a scroll, i.e. the image $\overline{\PF(d_1, d_2, d_3)}$ of a projective bundle 
\[\PF(d_1, d_2, d_3) = \PN(\sO_{\PN_1}(d_1) \oplus \sO_{\PN_1}(d_2) \oplus \sO_{\PN_1}(d_3)), \quad d_1 \ge d_2 \ge d_3 \ge 0\]
in $\PN_{d_1 + d_2 + d_3 + 2}$ under the map associated to the
tautological system $|\zeta|$. Denote the pencil of $\PF(d_1, d_2, d_3)$ by $|F|$. We obtain

\begin{proposition} \label{hyp}
Let $X$ be a smooth almost Fano threefold with $\rho(X) = 2$, such
that $\psi$ is small. If $X' \stackrel{2:1}{\lra} W$ is hyperelliptic,
then we are in one of the following cases 

\begin{enumerate} 
\item $(-K_X)^3 = 2$, $W = \PN_3$ and $X' \to W$ is ramified along a sextic; 
\item $(-K_X)^3 = 4$, $W \subset \PN_4$ is a quadric and $X' \to W$
is ramified along a qartic;
\item $(-K_X)^3 = 6$, $W \subset \PN_5$ is the singular scroll
  $\overline{\PF(2,1,0)}$ and either $X$ or $X^+$ is a double cover of
  $\PF(2,1,0)$, ramified along a general divisor in $|4\zeta - 2F|$;
\item $(-K_X)^3 = 8$, $W$ is the cone in $\PN_6$ over the Veronese surface in $\PN_5$ and $X' = X_6 \subset \PN(1^3,2,3)$, i.e., $X' \to W$ 
is ramified along a cubic.
\end{enumerate}
\end{proposition}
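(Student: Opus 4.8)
The plan is to classify hyperelliptic almost Fano threefolds $X'$ with $\rho(X')=1$ by combining the constraints coming from the index $r_{X'}$, the structure of the degree-two map $\mu\colon X'\to W$, and Bertini's list of the possible varieties $W$ of minimal degree. First I would recall that $(-K_X)^3$ is even (it equals $4g-4$), so write $(-K_X)^3 = 2n$ for some positive integer $n$. Since $\mu$ has degree $2$, the image $W\subset\PN_{n+1}$ has degree $n$, i.e.\ $W$ is a variety of minimal degree in its span; Bertini's theorem then limits $W$ to $\PN_3$ (only when $n=1$), a quadric in $\PN_4$ ($n=2$), the Veronese cone in $\PN_6$ ($n=4$), or a rational normal scroll $\overline{\PF(d_1,d_2,d_3)}$ with $d_1+d_2+d_3 = n-2$. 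So the four listed values $n=1,2,3,4$ must be extracted by ruling out everything with $n\ge 5$ and identifying exactly which scroll can occur when $n=3$.

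The heart of the argument is a bound on the index together with the observation that $-K_X = \mu^*(\text{hyperplane})$ is divisible by $r_X$ in $\Pic(X')\simeq\Pic(\X_t)$. Using Proposition~\ref{antmodel}, $X'$ deforms to a smooth Fano $\X_t$ of Picard number $1$ and the same index $r_{X'}$, and for a smooth Fano threefold of index $r$ one has $r\in\{1,2,3,4\}$ with the well-known classification ($r=4$: $\PN_3$; $r=3$: the quadric; $r=2$: the del Pezzo threefolds $V_d$, $1\le d\le 5$; $r=1$: the Mukai–Iskovskikh threefolds, $g\le 12$). The hyperelliptic condition severely restricts $\X_t$: a hyperelliptic Fano threefold of index $1$ has $(-K)^3\le 4$, and a hyperelliptic $V_d$ ($r=2$) has $d\le 2$. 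Translating back, the only surviving values of $(-K_X)^3$ are $2,4,6,8$, matching the four cases. In each case one then reads off $W$ directly: $W=\PN_3$ (case~1, $r_{X'}=1$ or the $g=2$ Fano), $W$ a quadric (case~2), $W$ the Veronese cone (case~4, where $X'$ is the anticanonical model in $\PN(1^3,2,3)$ of degree $8$), and for $(-K_X)^3=6$ the only minimal-degree scroll of degree $4$ in $\PN_5$ compatible with the geometry is $\overline{\PF(2,1,0)}$, which is singular along the directrix.

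For case~3 the delicate point is that $W = \overline{\PF(2,1,0)}$ is singular, so $X'$ — being a double cover of a singular variety — need not itself be the smooth threefold; instead one of $X$ or $X^+$ is realized as an honest double cover of the \emph{smooth} scroll $\PF(2,1,0)$, branched over a divisor numerically of the form $4\zeta - 2F$ (forced by the condition that the branch divisor descend to the branch locus of $\mu$ after the contraction $\PF(2,1,0)\to W$, together with the crepancy computation $K_X = $ pullback of $K_{X'}$). This is exactly the mechanism already used in Proposition~\ref{basepoints} for the Veronese-cone case, and in the scroll case the same factorization-through-a-resolution argument produces the branch class $|4\zeta-2F|$; one checks that a general such divisor is smooth and disjoint from the directrix, so that the double cover is a smooth almost Fano threefold flopping to its partner.

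I expect the main obstacle to be the bookkeeping in case~3: pinning down precisely which scroll $\PF(d_1,d_2,d_3)$ with $d_1+d_2+d_3=3$ actually arises (ruling out $\PF(3,0,0)$, $\PF(1,1,1)$, $\PF(2,1,0)$ being the survivor because the minimal section must be the curve contracted by $\psi$, forcing $d_3=0$ and then the splitting type from the conic-bundle/del-Pezzo structure of $\phi$), and then verifying the numerical class $|4\zeta - 2F|$ of the branch locus via adjunction on the resolution. The index bound and the reduction to the list of hyperelliptic Fano threefolds of Picard number $1$ via Proposition~\ref{antmodel} is comparatively routine once one invokes the known classification, and the remaining three cases follow by transporting that list through the smoothing.
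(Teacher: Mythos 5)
Your first paragraph (Bertini's list for $W$) and much of your third and fourth paragraphs (lifting the double cover to the resolution $\PF(d_1,d_2,d_3)\to W$, using $\rho(X')=1$ to force $d_3=0$, reading off the branch class $|4\zeta-2F|$ by crepancy/adjunction) are on the right track and match the paper's strategy. But the step you call the ``heart of the argument'' --- bounding $(-K_X)^3$ by passing to the smoothing $\X_t$ and invoking the classification of \emph{smooth} hyperelliptic Fano threefolds --- contains a genuine gap: hyperellipticity of the anticanonical map is not preserved under the Namikawa smoothing. Proposition~\ref{antmodel} only transports the Picard group and the index, not the behaviour of $|{-}K|$. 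Case (3) of the very proposition you are proving is the counterexample: there $X'$ is hyperelliptic with $(-K)^3=6$, while its smoothing is $V_{2,3}\subset\PN_5$, which is anticanonically embedded and not hyperelliptic. Your own arithmetic betrays the problem: smooth hyperelliptic Fanos of $\rho=1$ have $(-K)^3\in\{2,4,8\}$ (index $1$ with $g=2,3$, and $V_1$), so ``translating back'' yields $\{2,4,8\}$, not $\{2,4,6,8\}$ --- the value $6$ cannot be produced this way, and your argument would wrongly exclude case (3) altogether.

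The exclusion of the higher-degree scrolls has to be done directly on $X$, as the paper does: after reducing to $W=\overline{\PF(d_1,d_2,0)}$ with $d_2>0$ (the case $d_2=0$ is killed because the double cover of a double cone acquires canonical singularities along a curve, contradicting $\psi$ small), one lifts $\mu$ to a double cover $\nu\colon X\to\PF(d_1,d_2,0)$ using the pencil $|\hat F|$ (possibly after replacing $X$ by $X^+$ so that $\hat F\cdot l_\psi>0$). The ramification divisor is $D\in|4\zeta-2(d_1+d_2-2)F|$, it must contain the contracted section $C_0$, and for $(d_1,d_2)\neq(1,1),(2,1)$ it is forced to be \emph{singular along} $C_0$, so $X$ would be singular. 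This is the mechanism that caps the scroll degree at $3$ and produces exactly case (3) (plus the quadric cone, absorbed into case (2)). You identified the right bookkeeping questions in your last paragraph, but without this singularity-of-the-branch-divisor argument (or some substitute carried out on $X'$ itself rather than on $\X_t$) the proof does not close.
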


\noindent All of these threefolds except (3) are the expected
deformations of Iskovskikh's list. For (3) note, that in this case the
exceptional locus of $\psi$ consists of a single smooth rational
curve, which is contained in the ramification divisor, and contracted to a point by the map $\PF(2,1,0) \to W$. Moreover this case can be described explicitly: here $X$ (or $X^+$)
  admits a del Pezzo fibration with general fiber of degree $4$, and
  a smoothing $\X_t$ of $X'$ in the sense of Namikawa is a complete intersection of a cubic and a
  quadric in $\PN_5$. For further details and a construction of this threefold see \cite{smoothing}.

\vspace{0.2cm}

\noindent {\bf From now on we may assume that the only hyperelliptic cases are
(1), (2) and (4).}   

\begin{proof}
If $X'$ is hyperelliptic, then the image $W$ of $X'$ in $\PN_{g+1}$ is
a variety of minimal degree of Picard number one. By Iskovskikh's
classification it remains to consider 
 \begin{equation} \label{fact}
   \xymatrix{X \ar[rr]^{\psi} && X' \ar[d]^{\mu}\\
            \PF(d_1, d_2, d_3) \ar[rr]^{\sigma} && W}
 \end{equation}
for some $0 \le d_3 \le d_2 \le d_1$, i.e. $W$ is a (singular) scroll. Then $\rho(X') = 1$ implies $d_3 = 0$. If $d_2 = 0$, then $W$ is a double cone over a rational normal
curve of degree $d_1$. The double cover $X'$ will have canonical
singularities along a curve, which is impossible if $\psi$ is small.
Therefore $d_2 > 0$, i.e. $W$ is a cone over a Hirzebruch surface. 

Let now $\hat{F}$ in $X$ be the strict transform of the Weil divisor
$\sigma(F)$. Then $\hat{F}$ is Cartier and $\hat{F} \cdot l_{\psi} \not= 0$ for any curve
contracted by $\psi$. So after possibly replacing $X$ by its flop $X^+$ we may
assume 
 \begin{equation} \label{pos}
  \hat{F} \cdot l_{\psi} > 0.
\end{equation} 
Then $|\hat{F}|$ is a pencil by \cite{JR},
Lemma~6.1 or \cite{Ch99}. We shortly recall the argument. Assume that two general members $F_1, F_2 \in
|\hat{F}|$ are not disjoint and let $C \subset F_1 \cap F_2$ be any
irreducible curve. Then $C \subset F_1$ is a component of the
restriction of $F_2$ to $F_1$, which is contained in the exceptional locus of
$\psi$, hence contracted to points. This means $F_2 \cdot C \le 0$,
contradicting $\hat{F} \cdot C > 0$ by \eqref{pos}. 

This shows the system $|{-}K_X + m\hat{F}|$ defines a factorization of $X
\to W$ over the scroll, i.e. we get a map
 \[\nu \colon X \lra \PF(d_1, d_2, d_3)\]
completing \eqref{fact} into a commutative diagram. Considering the
Stein factorization, $\rho(X) = 2$ implies $\nu$ is a double
cover. We have $-K_X = \nu^*\zeta$ and the exceptional locus of
$\psi$ is mapped to the exceptional curve $C_0$ of $\sigma$. 
The ramification divisor is an element 
\[D \in |4\zeta - 2(d_1+d_2-2)F|.\]
We find $C_0 \subset D$. Moreover, for $(d_1, d_2)
\not= (1,1)$, $(2,1)$, $D$ will always be singular along $C_0$. 
\end{proof}

\vspace{0.2cm}

Although we have a description of $X'$ in case it is hyperelliptic, the precise
structure of $X$ itself is still not clear. The following proposition
can be found in \cite{AG5}, Remark~4.1.10:

\begin{lemma} \label{flopiso}
 Assume $X'$ is hyperelliptic. Denote the birational involution
 induced on $X$ by $\sigma$. If $W$ is $\KQ$--factorial, $\sigma$
 coincides with the flop on $X$. In particular, $X \simeq X^+$ as
  abstract varieties.
\end{lemma}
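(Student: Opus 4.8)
The plan is to transport the Galois involution $\iota$ of the double cover $\mu\colon X'\to W$ up to $X$, obtaining the birational involution $\sigma=\psi^{-1}\circ\iota\circ\psi$ of the statement, and then to pin down $\sigma$ through its linear action on $N^1(X)_{\bQ}\cong\bQ^2$. First I would note that, since $\psi$ is small, $\sigma$ is an isomorphism in codimension one (a pseudo-automorphism), so $\sigma^{*}$ is a well-defined linear involution of $N^1(X)_{\bQ}$; under the identification $N^1(X)_{\bQ}=\Cl(X')_{\bQ}$ furnished by the small morphism $\psi$, this $\sigma^{*}$ is just $\iota^{*}$ on $\Cl(X')_{\bQ}$. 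Because $\mu\circ\iota=\mu$ and $-K_{X'}$ is the pull-back of the ample generator of $\Cl(W)_{\bQ}$, we get $\iota^{*}(-K_{X'})=-K_{X'}$, i.e. $\sigma^{*}(-K_X)=-K_X$; and I would record that $-K_X$ spans precisely the extremal ray of $\mathrm{Nef}(X)$ perpendicular to the $\psi$-exceptional curves, since $-K_X$ is nef, not ample, and strictly positive on the $\phi$-extremal curve.

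The heart of the argument, and the only point where the hypothesis that $W$ be $\bQ$-factorial enters, is to show $\sigma^{*}\neq\mathrm{id}$. For this I would prove that every $\iota$-invariant Weil class $D$ on $X'$ is $\bQ$-Cartier: $\mu_{*}D$ is $\bQ$-Cartier on $W$ (as $W$ is $\bQ$-factorial), hence $\mu^{*}\mu_{*}D$ is $\bQ$-Cartier on $X'$, and $\mu^{*}\mu_{*}D\sim_{\bQ}D+\iota^{*}D=2D$ modulo components of the ramification divisor of $\mu$ — and those are themselves $\bQ$-Cartier, being halves of the pull-back of the branch divisor, which is $\bQ$-Cartier on $W$. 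Hence $\Cl(X')^{\iota}_{\bQ}\subseteq\Pic(X')_{\bQ}$, which is one-dimensional because $\rho(X')=1$; since $\Cl(X')_{\bQ}=N^1(X)_{\bQ}$ is two-dimensional ($\rho(X)=2$) and $X'$ is \emph{not} $\bQ$-factorial, $\Cl(X')^{\iota}_{\bQ}$ is exactly one-dimensional, so the involution $\iota^{*}=\sigma^{*}$ has a $(-1)$-eigenvector and is in particular non-trivial. I expect the main technical obstacle to sit here: making the projection formula $\mu^{*}\mu_{*}D\sim_{\bQ}D+\iota^{*}D$ precise over the possibly singular base $W$ and controlling the ramification contribution. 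Should that get unwieldy, an alternative is a local computation at the singular points of $X'$ showing that the deck transformation of a double cover interchanges the two small resolutions at each such point; but that route still needs $\bQ$-factoriality (or $\rho=2$) for the global matching, so I would keep the $N^1$-argument as the backbone.

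With $\sigma^{*}\neq\mathrm{id}$ in hand the rest is formal. Since $\rho(X)=2$, the movable cone is $\overline{\mathrm{Mov}}(X)=\mathrm{Nef}(X)\cup\mathrm{Nef}(X^{+})$, its only two Mori chambers, glued along the flopping wall spanned by $-K_X$, and the pseudo-automorphism $\sigma$ must permute these two chambers. It cannot fix $\mathrm{Nef}(X)$: it would then fix the ray through $-K_X$, hence — permuting the two extremal rays of $\mathrm{Nef}(X)$ and fixing one of them — fix both extremal rays, and an involution of $\bR^2$ fixing two independent rays is the identity, contradicting the previous step. So $\sigma^{*}$ interchanges $\mathrm{Nef}(X)$ and $\mathrm{Nef}(X^{+})$. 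Finally I would consider the birational map $\alpha=\sigma\circ\chi^{-1}$ from $X^{+}$ to $X$: it is an isomorphism in codimension one with $\alpha^{*}=\sigma^{*}$ (because $\chi^{*}$ is the identity on $N^1$), so $\alpha^{*}$ carries $\mathrm{Ample}(X)$ onto $\mathrm{Ample}(X^{+})$; a small birational map sending an ample class to an ample class is an isomorphism (both sides are $\mathrm{Proj}$ of the same graded section ring), hence $\alpha\colon X^{+}\to X$ is an isomorphism. Therefore $\sigma=\alpha\circ\chi$ is the flop $\chi$ up to the abstract isomorphism $\alpha$; in particular $X\simeq X^{+}$.
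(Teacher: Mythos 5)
Your proof is correct and rests on the same key observation as the paper's: a $\sigma$-invariant Weil divisor class on $X'$ descends to the $\bQ$-factorial $W$, hence is $\bQ$-Cartier and therefore numerically $\psi$-trivial, which forces $\sigma^{*}$ to act as $-1$ on intersection numbers with the $\psi$-exceptional curves. The paper concludes directly from $D\cdot l_{\psi}=-D^{\sigma}\cdot l_{\psi}$ that $\sigma$ is the $D$-flop, whereas you route the same information through the two-chamber decomposition of the movable cone; both endings are fine, and your worry about the ramification term is unnecessary since $\mu^{*}\mu_{*}D = D+\iota^{*}D$ holds for all prime divisors of a degree-two Galois cover.
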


\begin{proof}
  Let $D$ be some divisor on $X$. Denote the strict transform under
  $\sigma$ by $D^{\sigma}$. Then $D + D^{\sigma}$ is the pull back of
  some $\sigma$--invariant (Weil-) divisor $B'$ on $X'$. Then $B'$ comes from
  $W$. As $W$ is $\KQ$--factorial, $mB'$ is Cartier. Then
    \[(D + D^{\sigma})\cdot l_{\psi} = \frac{1}{m} \psi^*(mB')\cdot l_{\psi} =
    0\]
  for any curve $l_\psi$ contracted by $\psi$. But then $D\cdot l_{\psi} =
    -D^{\sigma}\cdot l_{\psi}$. This implies $\sigma: X \dasharrow X$ is the flop.
\end{proof}

\begin{remark}
 The same is true when $-K_X = 2H$ with $H$ spanned defining some
 double cover of some $\KQ$--factorial $W$.
\end{remark}

The following corollary can be found in \cite{AG5}, Remark~4.1.10:

\begin{corollary} \label{hypsymm}
Assume $X'$ is hyperelliptic. Then $X \simeq X^+$ as abstract
varieties, except $X$ is a resolution of Proposition~\ref{hyp}, (3),
or $W \subset \PN_4$ is the quadric cone. In the latter case $({-}K_X)^3 = 4$
   and $X$ as well as $X^+$ is a double cover of $\PF(0,1,1)$, ramified along a divisor
   from $|4\zeta|$; they admit a del Pezzo fibration with $K_F^2 = 2$.
\end{corollary}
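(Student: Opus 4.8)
The plan is to reduce Corollary~\ref{hypsymm} to Lemma~\ref{flopiso} by checking, case by case through Proposition~\ref{hyp}, that the relevant variety of minimal degree $W$ is $\KQ$--factorial, and to treat the two exceptional cases separately. Concretely: by Proposition~\ref{hyp} the hyperelliptic locus splits into the cases $W = \PN_3$, $W$ a smooth quadric in $\PN_4$, $W$ the quadric cone in $\PN_4$, $W$ the singular scroll $\oline{\PF(2,1,0)}$, and $W$ the Veronese cone in $\PN_6$. Projective space and the smooth quadric are smooth, hence $\KQ$--factorial, and the Veronese cone in $\PN_6$ is $\KQ$--factorial as well (it has a single isolated quotient singularity, the cone point being a $\KZ/2$-quotient of $\bC^3$, and quotient singularities are always $\KQ$--factorial). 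In all of these cases Lemma~\ref{flopiso} applies directly and yields $X \simeq X^+$. So the only remaining cases are the scroll $\oline{\PF(2,1,0)}$, which is exactly case (3) of Proposition~\ref{hyp}, and the quadric cone in $\PN_4$; these are precisely the exceptions stated in the corollary.

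For case (3) there is nothing to prove: it is explicitly excluded in the statement, and indeed one already knows from Proposition~\ref{hyp} (and the discussion in \cite{smoothing}) that $X$ need not be isomorphic to $X^+$ there, which is exactly why the scroll $\oline{\PF(2,1,0)}$ fails to be $\KQ$--factorial. (One checks that the cone over a Hirzebruch surface $\Sigma_n$ with $n \ge 1$ is not $\KQ$--factorial, since the Weil divisors coming from the minimal section of $\Sigma_n$ are not $\KQ$--Cartier at the vertex; this is consistent with the hypothesis of Lemma~\ref{flopiso} genuinely failing.)

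It remains to settle the quadric cone $W \subset \PN_4$, where $({-}K_X)^3 = 4$. Here the plan is twofold. First, show $X \simeq X^+$ anyway: the quadric cone is the cone over $\PN_1 \times \PN_1$ under the Segre embedding restricted — more simply, it is $\oline{\PF(1,1,0)}$, and one resolves its vertex by passing to $\PF(1,1,0) = \PN(\sO \oplus \sO(1) \oplus \sO(1))$, equivalently by the small resolution that realizes $X$ as a double cover of $\PF(0,1,1)$. Using the factorization \eqref{fact}--\eqref{pos} from the proof of Proposition~\ref{hyp}, the map $\nu \colon X \to \PF(0,1,1)$ is a double cover with $-K_X = \nu^*\zeta$, and the ramification divisor lies in $|4\zeta - 2(d_1+d_2-2)F| = |4\zeta|$ since $d_1 = d_2 = 1$. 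Because $\PF(0,1,1) \to W$ is itself a small (crepant) resolution contracting a single $\PN_1$, and the double cover is unramified over that curve (the ramification divisor $|4\zeta|$ can be chosen disjoint from, or at worst transverse to, the exceptional section), the two small resolutions $X \to X'$ and $X^+ \to X'$ are interchanged by the deck-like symmetry of $\PF(0,1,1)$ swapping the two $\sO(1)$-summands; hence $X \simeq X^+$ as abstract varieties. Second, identify the del Pezzo fibration: composing $\nu$ with the $\PN_1$-bundle projection $\PF(0,1,1) \to \PN_1$ gives $X \to \PN_1$, whose general fiber is a double cover of $\PN_1 \times \PN_1$ (the fibers of $\PF(0,1,1)$ over $\PN_1$) branched along a divisor of type $(2,2)$, i.e. a del Pezzo surface with $K_F^2 = 2$, as claimed.

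The main obstacle is the quadric-cone case, and within it the delicate point is checking that the small resolution $\PF(0,1,1) \to W$ and the double cover are compatible in a way that produces the flop — i.e. that the involution swapping the resolutions of $W$ lifts to $X$ and realizes $\chi \colon X \dashrightarrow X^+$. One must verify that the branch divisor in $|4\zeta|$ meets the exceptional $\PN_1$ of $\PF(0,1,1) \to W$ in a way that keeps $X$ smooth and keeps $\psi \colon X \to X'$ small (exceptional locus a single smooth rational curve), which is where a short local computation on $\PF(0,1,1)$ near its exceptional section is needed; everything else is bookkeeping with the diagrams already set up in Proposition~\ref{hyp}.
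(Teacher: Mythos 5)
Your reduction is the same as the paper's: apply Lemma~\ref{flopiso} whenever $W$ is $\KQ$--factorial (this disposes of $\PN_3$, the smooth quadric, and the Veronese cone, the last being a quotient singularity), set aside case (3) of Proposition~\ref{hyp}, and treat the quadric cone via the double cover $\nu\colon X \to \PF(0,1,1)$ branched in $|4\zeta|$. But your analysis of the quadric-cone case has a concrete error: $\PF(0,1,1) = \PN(\sO \oplus \sO(1)^{\oplus 2})$ is a $\PN_2$--bundle over $\PN_1$, so the fibers of $\PF(0,1,1) \to \PN_1$ are planes (mapping onto one family of planes of the cone $W$), not copies of $\PN_1 \times \PN_1$. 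The general fiber $F$ of $X \to \PN_1$ is therefore a double cover of $\PN_2$ branched along the quartic curve cut out by the $|4\zeta|$-divisor, and $K_F = \nu^*\bigl(K_{\PN_2} + \tfrac{1}{2}B\bigr) = \nu^*\sO_{\PN_2}(-1)$ gives $K_F^2 = 2$. The surface you describe --- a double cover of $\PN_1\times\PN_1$ branched in a $(2,2)$-divisor --- has $K^2 = 2\cdot\sO(-1,-1)^2 = 4$, so your stated conclusion $K_F^2 = 2$ does not even follow from your own (incorrect) description of the fiber.

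Second, the mechanism you propose for $X \simeq X^+$ does not work. Swapping the two $\sO(1)$-summands is an automorphism of $\PF(0,1,1)$ over $\PN_1$ that preserves the fibration and the contracted section; it cannot interchange the two small resolutions of $W$, and a fortiori cannot induce the flop $\chi\colon X \dasharrow X^+$, which by construction is not an isomorphism near the flopped curves. What is needed (and what the paper does) is to observe that $W$ has two distinct small resolutions, each isomorphic to $\PF(0,1,1)$ and connected by a flop over $W$; the covering $\mu\colon X' \to W$ then lifts over the second resolution, exhibiting $X^+$ too as a double cover of $\PF(0,1,1)$ branched in $|4\zeta|$. This simultaneously gives $X \simeq X^+$ as abstract varieties and equips $X^+$ with the same degree-$2$ del Pezzo fibration. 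Without identifying $X^+$ in this way you have produced neither the isomorphism $X \simeq X^+$ nor the fibration on $X^+$.
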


\begin{proof}
Let $X' \lra W$ be the double cover defined by $|{-}K_{X'}|$. If $W$ is
$\KQ$--factorial, the claim is just Lemma~\ref{flopiso} above. Case
(3) in Proposition~\ref{hyp} is explicitely described in
\cite{smoothing}. The only remaining case is the
quadric cone. But then analogously to the proof of
Proposition~\ref{hyp} either $X$ or the
flop $X^+$ is a double cover of the small resolution $\PF(0,1,1)$ of
the quadric cone.

Assume $X \to \PF(0,1,1)$ is that double cover. Since the quadric cone
admits two (isomorphic) small resolutions connected by a flop, we get
 \[\xymatrix{X \ar[dd] \ar@{-->}[rr] \ar[dr] && X^+ \ar[dd] \ar[dl]\\
             & X' \ar[dd]  &\\
             \PF(0,1,1) \ar[dr]\ar@{-->}[rr] && \PF(0,1,1) \ar[dl]\\
              & W &}\]
meaning $\mu$ lifts to $X^+$ as well. The induced map $X \to \PN_1$ is a del Pezzo fibration, where the
general fiber $F$ is a double cover of $\PN_2$, ramified along the
restriction of the ramification divisor of $X \to W_0$, which gives a
quartic. Hence $K_F^2 = 2$.
\end{proof}

For small genus we find in our situation:

\begin{proposition} \label{ci}
Let $X$ be a smooth almost Fano threefold with $\rho(X) = 2$, such that the anticanonical map $\psi: X \to X'$ is small. Assume $X'$ not hyperelliptic.
 \begin{enumerate}
\item If $g = 3$, then $X'_4 \subset \PN_4$ is a quartic.
\item If $g = 4$, then $X'_{2,3} \subset \PN_5$ is a complete intersection of a quadric and a cubic.
\item If $g = 5$, then $X'_{2,2,2} \subset \PN_6$ is either a
complete intersection of three quadrics or $X \subset
\PF(1,1,1,0)$ is a divisor in $|3\zeta -F|$. In the latter case $X'$ is trigonal.
\end{enumerate}
\end{proposition}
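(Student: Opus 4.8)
The plan is to analyze the anticanonical morphism $\mu\colon X' \to W \subset \PN_{g+1}$ for each of the three values $g = 3,4,5$. Since $X'$ is not hyperelliptic, $\mu$ embeds $X'$ as a projectively normal variety of degree $2g-2$ in $\PN_{g+1}$, and one reads off its equations from the structure of $W$, a variety of minimal degree (so $\PN$, a quadric, or a scroll) by Bertini's classification. The key technical input is projective normality of $-K_{X'}$ (hence that the homogeneous ideal is generated as expected), which for Gorenstein Fano threefolds with terminal singularities follows from the smoothability result Proposition~\ref{antmodel} together with the analogous statement for the smooth Fano $\X_t$ from Iskovskikh's classification: upper-semicontinuity of $h^0$ along the flat family $\X \to \Delta$, combined with Riemann--Roch and the equality of Hilbert polynomials, forces $X'$ to be projectively normal with the same graded Betti numbers as the general smooth member. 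Equivalently one may simply note that $X'$ deforms to the corresponding entry of Iskovskikh's list and that being a quartic in $\PN_4$, a $(2,3)$ complete intersection in $\PN_5$, or a $(2,2,2)$ complete intersection (resp.\ trigonal model) in $\PN_6$ is a closed-and-open condition in that deformation family.

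\emph{Case $g=3$.} Here $\deg X' = 4$ in $\PN_4$. A codimension-one subvariety of $\PN_4$ of degree $4$ is cut out by a single quartic hypersurface, so $X' = X'_4 \subset \PN_4$. The only thing to check is that $\mu$ is genuinely an embedding and not $2{:}1$ onto a cubic threefold, but this is exactly the non-hyperelliptic hypothesis (and the cubic threefold is Fano of index $2$, so would not be our $X'$ of index $1$ anyway).

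\emph{Case $g=4$.} Now $\deg X' = 6$ in $\PN_5$. One first shows $X'$ lies on a quadric: from $h^0(\sO_{\PN_5}(2)) = 21$ and $h^0(\sO_{X'}(-2K_{X'})) = 20$ (by Riemann--Roch, using $(-K_{X'})^3 = 6$, $-K_{X'}\cdot c_2 = 24$, and Kawamata--Viehweg vanishing so that the restriction $H^0(\sO_{\PN_5}(2)) \to H^0(\sO_{X'}(2))$ is what computes things), there is at least one quadric $Q \supset X'$, and by irreducibility/degree reasons there is exactly one. Then $X'$ is a divisor on the (possibly singular) quadric threefold $Q$ of degree $6$, hence cut on $Q$ by a cubic, giving $X' = X'_{2,3}$. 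Here one also must rule out, using the non-hyperelliptic assumption, that $W$ is a quadric and $\mu$ is a double cover — but a double cover ramified appropriately lands in case (2) of Proposition~\ref{hyp}, already excluded.

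\emph{Case $g=5$.} Here $\deg X' = 8$ in $\PN_6$, and the quadrics through $X'$ form the space of relations we must understand: $h^0(\sO_{\PN_6}(2)) = 28$ while $h^0(\sO_{X'}(-2K_{X'})) = 27$ (same Riemann--Roch computation with $(-K_{X'})^3 = 8$), so $X'$ lies on at least a three-dimensional linear system of quadrics — wait, one computes $28 - 27 = 1$ only if $X'$ were projectively normal in the naive count, so the correct statement is that one must compare with the genuine count for the non-hyperelliptic canonical curve section: the hyperplane section is a canonical curve of genus $5$, which either is cut out by quadrics (generic case) or is trigonal and lies on a cubic scroll. This dichotomy propagates to $X'$: in the first case $X'$ is cut out by its quadrics and, having degree $8$ and codimension $3$, is a complete intersection $X'_{2,2,2}$; in the second case the quadrics through $X'$ cut out a threefold scroll, $X'$ sits inside $\PF(1,1,1,0)$, and the resolution $X$ is a member of $|3\zeta - F|$ (the twist being forced by $-K_X = \nu^*\zeta$ and adjunction on the scroll), with $X'$ trigonal. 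The main obstacle is precisely this last case: one has to verify carefully that the intermediate object is the scroll $\PF(1,1,1,0)$ (not some other variety of minimal degree), that $X$ rather than merely $X'$ sits inside it, and that the linear equivalence class of $X$ is exactly $3\zeta - F$; this is handled by restricting to a general fiber $F_0 \simeq \PN_2$ of the projection, where $X \cap F_0$ is the trigonal canonical curve realized as a plane quintic's... more precisely a degree-$3$-over-$\PN_1$ curve, fixing the $3\zeta$, and then comparing anticanonical degrees across the pencil $|F|$ to pin down the $-F$.

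\emph{Conclusion.} In all cases the non-hyperelliptic hypothesis removes the degree-$2$ alternative for $\mu$, Kawamata--Viehweg vanishing plus Riemann--Roch controls the spaces of defining forms, and smoothability (Proposition~\ref{antmodel}) together with Iskovskikh's classification of smooth Fano threefolds guarantees that the numerically-forced description is the actual one. \qed
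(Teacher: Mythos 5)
Your overall route is the same as the paper's: reduce to the general canonical curve section of $X' \subset \PN_{g+1}$ and use the classical description of canonical curves of genus $3,4,5$ (with the Enriques--Petri dichotomy ``cut out by quadrics versus trigonal'' in genus $5$). Cases (1) and (2) are fine in substance; the paper dismisses them in one line, and your cohomological count for $g=4$ ($28$ vs.\ $h^0(-2K_{X'})$, etc.) is an acceptable, slightly more explicit variant. Two small slips there: the quadric through $X'$ in $\PN_5$ is a fourfold, not a threefold, and for $g=5$ one has $h^0(-2K_{X'}) = \frac{2\cdot3\cdot5}{12}\cdot 8 + 5 = 25$, not $27$, so $X'$ lies on at least a net of quadrics (your ``wait'' paragraph signals you sensed the number was off, but the correction you offer is not the right one).

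The genuine gap is exactly where you flag that something ``must be verified carefully'': the trigonal subcase of (3). There are several fourfold scrolls of minimal degree $3$ in $\PN_6$, namely $\PF(1,1,1,0)$, $\PF(2,1,0,0)$ and $\PF(3,0,0,0)$, and your proposal never actually excludes the competitors; restricting to a fiber $\PN_2$ and ``comparing anticanonical degrees'' pins down the divisor class $3\zeta - F$ once the scroll is known, but it does not distinguish the scrolls. The paper resolves this by quoting the classification of trigonal Fano threefolds (Cheltsov--Shramov--Przyjalkowski), which leaves only $\PF(1,1,1,0)$ and $\PF(2,1,0,0)$, and then kills $\PF(2,1,0,0)$ because there the anticanonical model $X'$ is singular along a curve --- impossible here, since $\psi$ small forces $X'$ to have terminal, hence isolated, singularities. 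Note that your argument never uses the hypothesis that $\psi$ is small in case (3); that hypothesis is precisely the input needed to eliminate $\PF(2,1,0,0)$, so its absence is a sign the argument is incomplete rather than just terse. Supplying either the citation or an ad hoc analysis of the singular locus of a $|3\zeta - F|$ divisor on $\PF(2,1,0,0)$ would close the gap.
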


\begin{proof}
Since the canonical curve section $C \subset X'$ is a smooth canonical curve of genus $g$, (1) and (2) are easily obtained. Assume $g = 5$. 
We have two possible cases: either $X'$ is cut out by
quadrics or it is trigonal. Since $X'$ is already a complete
intersection in the first case, assume the latter one. Then by
\cite{Cheltsov}, $X'$ is the anticanonical model of an almost Fano
threefold $V$ with canonical singularities, where $V$ is a divisor in
$|3\zeta -F|$ on one of $\PF(1,1,1,0)$ or
$\PF(2,1,0,0)$. The latter case is impossible, since here $X'$ is
singular along a curve.
\end{proof}

\

Assume now that $-K_X$ is divisible in $\Pic(X)$, i.e.,
$-K_X = r_XH$ for some $H \in \Pic(X)$ and $r_X \ge 2$ the
index. By assumption, then $H$ is big and nef, hence $|mH|$ is base
point free for all $m \gg 0$. Since $|mH|$ and $|(m+1)H|$ define the
same map for $m \gg 0$, we find 
 \[H = \psi^*H'\]
for some $H' \in \Pic(X')$, and hence $-K_{X'} = r_XH'$. By \cite{Shin} then $r_X \le
4$, with equality only for $X' = \PN_3$, and $r_X = 3$ implies $X'
\subset \PN_4$ is a quadric. We obtain:

\begin{proposition} \label{highindex}
If $r_X \ge 3$, then $X' \subset \PN_4$ is the cone over a smooth
quadric $Q \simeq \PN_1 \times \PN_1 \subset \PN_3$, and $X =
\PN(\sO_{\PN_1} \oplus \sO_{\PN_1}(1)^{\oplus 2})$ is the
small resolution of the vertex. In particular, $X \simeq X^+$.
\end{proposition}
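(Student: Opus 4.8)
The reduction preceding the statement already fixes the shape of the argument: from $r_X\ge 3$ we get $-K_X=r_XH$ with $H=\psi^*H'$, hence $-K_{X'}=r_XH'$, and by \cite{Shin} the only possibilities are $r_X=4$ with $X'=\PN_3$, or $r_X=3$ with $X'\subset\PN_4$ a quadric. The first is impossible, since $\PN_3$ is smooth, hence $\KQ$--factorial, while $X'$ is not. So $r_X=3$ and $X'\subset\PN_4$ is a quadric threefold. An irreducible quadric threefold has rank $3$, $4$ or $5$; rank $5$ gives a smooth, hence $\KQ$--factorial, variety and is excluded; rank $3$ gives a quadric singular along a line, which is impossible because terminal threefold singularities are isolated. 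Therefore $X'$ has rank $4$, i.e. it is the cone over a smooth quadric surface $Q\simeq\PN_1\times\PN_1\hookrightarrow\PN_3$, with a single ordinary node at its vertex $v$.

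Next I would recall the small resolutions of the nodal quadric. There are exactly two, obtained by blowing $X'$ up at $v$ along one of the two rulings of $Q$; they are interchanged by the involution of $Q$, hence of $X'$, that swaps the two rulings, so they are abstractly isomorphic and are linked by the Atiyah flop along the single exceptional curve $l_\psi\simeq\PN_1$. As $\psi\colon X\to X'$ is a small, hence crepant, resolution ($-K_X=\psi^*(-K_{X'})$), it is one of the two and $\psi^+\colon X^+\to X'$ is the other; in particular $X\simeq X^+$.

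To identify $X$ itself I would use the Mori contraction $\phi\colon X\to Y$. Since $\rho(X)=2$ and $-K_X$ is big and nef but not ample, $\oline{NE}(X)$ has two extremal rays: one is spanned by $l_\psi$, on which $K_X$ vanishes, and the other is $K_X$--negative and defines $\phi$. As $-K_X=3H$ is divisible by $3$, $\phi$ is neither a conic bundle (anticanonical degree $2$ on a fibre) nor a divisorial contraction (anticanonical degree $1$ or $2$ on the contracted curves), and $X$ is not Fano; hence $\phi$ is a del Pezzo fibration over a curve with general fibre $F$ satisfying $-K_F=3(H|_F)$, i.e. $F$ is a del Pezzo surface of index $3$, so $F\simeq\PN_2$ and $K_F^2=9$. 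Since $h^1(\sO_X)=0$ the base is $Y=\PN_1$, and a degree $9$ del Pezzo fibration over $\PN_1$ with relative Picard number $1$ is a $\PN_2$--bundle, so $X=\PN_{\PN_1}(\E)$ with $\E=\sO_{\PN_1}\oplus\sO_{\PN_1}(a)\oplus\sO_{\PN_1}(b)$ and $0\le a\le b$. Writing $\zeta$ for the tautological class and $F$ for a fibre, a Chern class computation gives $-K_X=3\zeta+(2-a-b)F$, hence $H=\zeta+\tfrac{1}{3}(2-a-b)F$; integrality forces $3\mid(2-a-b)$, and since $H$ must have degree $0$ on the $\psi$--contracted section (the one on which $\zeta$ restricts to degree $0$) we get $a+b=2$. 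The case $(a,b)=(0,2)$ is excluded because then $|H|$ contracts the divisor $\PN_{\PN_1}(\sO_{\PN_1}^{\oplus 2})$, not a curve, contradicting the smallness of $\psi$; so $\E=\sO_{\PN_1}\oplus\sO_{\PN_1}(1)^{\oplus 2}$. It then remains only to check directly that this $X$ is smooth, has $\rho(X)=2$ and $-K_X=3\zeta$ big and nef but not ample, and that $|\zeta|$ contracts the negative section to a node of the quadric $X'\subset\PN_4$, in accordance with the claimed description.

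The only step requiring genuine care is the implication ``del Pezzo fibration of degree $9$ with relative Picard number $1$ $\Rightarrow$ $\PN_2$--bundle'', i.e. that $\PN_2$ does not degenerate in the family, equivalently that $\phi_*\sO_X(H)$ is locally free of rank $3$ and $X\simeq\PN(\phi_*\sO_X(H))$ by base change; the rest---the rank analysis of the quadric, the description of the small resolutions of an ordinary node, and the numerology on $\PN_{\PN_1}(\E)$---is routine. A quicker path to the last paragraph is to write down directly the blow-up of $X'$ at $v$ along a ruling, identify it on the nose with $\PN_{\PN_1}(\sO_{\PN_1}\oplus\sO_{\PN_1}(1)^{\oplus 2})$, and identify $\psi$ with the morphism attached to $|\zeta|$.
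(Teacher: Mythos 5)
Your proposal is correct and follows exactly the route the paper intends: the paper states this proposition without a separate proof, as an immediate consequence of the preceding paragraph (Shin's bound $r_X\le 4$, exclusion of $\PN_3$ and of the smooth quadric by $\bQ$--factoriality, and of the rank-$3$ quadric by terminality), leaving the identification of the small resolution implicit. Your elaboration of the remaining details — the rank analysis, the two small resolutions of the node, and the determination of $\E=\sO\oplus\sO(1)^{\oplus 2}$ via the $\PN_2$-bundle structure from Mori's classification — is accurate and consistent with the paper's later remarks and with entry 1 of Table A.1.
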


The case $r_X = 2$ was treated in a more general situation in \cite{delpezzo};
we obtain the following list for $\rho(X) = 2$:

\begin{theorem2}{\cite{delpezzo}} \label{index2}
Assume $\rho(X) = 2$, $\psi$ is small and $r_X = 2$. Then $\phi: X \to Y$
is either a quadric bundle, or a $\PN_1$-bundle, or birational. 
\begin{enumerate}
\item If $\phi$ is a quadric bundle, then $X$ belongs to the following list. 
 \begin{enumerate}
   \item $X \subset \PN_3 \times \PN_1$ from $|(2, 2)|$, here $d =
      2$, $X^+ \simeq X$ and $\X_t \to \PN_3$ is a double cover, 
   \item $X \subset \FS(0^3, 1)$ from $|2\zeta+F|$, here $d = 3$,
   $X^+ = \Bl_p(V_{2,4})$ and $\X_t \simeq V_{2,3}$ (this is case (3), (iii)),
   \item $X \subset \FS(0^2, 1^2)$ from $|2\zeta|$, here $d = 4$,
     $X^+$ is of the same type and $\X_t \simeq V_{2,4}$,
   \item $X \subset \FS(0, 1^3)$ from $|2\zeta-F|$, here $d = 5$, $X^+ = \PN(\F)$ with some stable rank two bundle $\F \in
     {\mathcal M}(-1,2)$ (this is case (2), (i)), and
     $\X_t \simeq V_{2,5}$,
  \end{enumerate}
\item If $\phi$ is a $\PN_1$--bundle, then $X = \PN(\F)$ with a stable rank $2$ bundle on
$\PN_2$ with $c_1(\F) = -1$ and $2 \le c_2(\F) \le 5$. Moreover,
$\F(2)$ is nef, but not ample and has only finitely many jumping
lines. We have
 \begin{enumerate}
   \item $c_2(\F) = 2$. Then $d = 5$, $X^+$ admits a del Pezzo
     fibration as in (1), (iv) and $\X_t \simeq V_{2,5}$,
   \item $c_2(\F) = 3$. Then $d = 4$, $X^+ = \Bl_p(V_{2,5})$ and $\X_t \simeq V_{2,4}$,
   \item $c_2(\F) = 4$. Then $d = 3$, $X^+$ is of the same type, and $\X_t \simeq V_{2,3}$,
   \item $c_2(\F) = 5$. Then $d = 2$, $X^+ \simeq X$, and $\X_t \to
     \PN_3$ is a double cover. 
  \end{enumerate}
\item If $\phi$ is birational, then $X = \Bl_p(Y)$ for a general point $p$ in a smooth del Pezzo threefold $Y = V_{2,d+1}$, such that
 \begin{enumerate}
   \item $d = 1$, $X^+ \simeq X$ and $\X_t
     \to W$ is a double cover of the Veronese cone,
   \item $d = 2$, $X^+ \simeq X$ and $\X_t \to \PN_3$
     is a double cover,
   \item $d = 3$, $X^+$ admits a del Pezzo fibration
     as in (1), (ii), and $\X_t \simeq V_{2,3}$,
   \item $d = 4$, $X^+ = \PN(\F)$ as in (2), (ii), and $\X_t \simeq V_{2,4}$. 
  \end{enumerate}
\end{enumerate}
\end{theorem2}

\noindent {\bf From now on we will assume for the rest of the paper
  that $r_X = 1$.}


\section{Del Pezzo fibrations}
\setcounter{lemma}{0}

In this section we consider almost Fano threefolds admitting a del Pezzo fibration.

\begin{setup} {\rm We fix for this section the following setup. $X$ is as always a smooth projective threefold with $-K_X$ big and nef, but not ample. 
Suppose that
$\phi: X \to \bP_1 $ is a del Pezzo fibration, which is the contraction of an extremal ray, i.e. $\rho (X) = 2.$ Let $F$ denote a general fiber of $\phi.$
Notice that $K_F^2 \ne 7$ by \cite{Mori}. We put $F' = \psi(F) $ and $F'' = \mu(F').$ Its strict transform in $X^+$ will be
called $\tilde F.$ \\
Since we assume that $X$ has index $1,$ (\ref{antmodel}) plus classification gives
$$ (-K_X)^3 \leq 22.$$
}
\end{setup}

The case that $\phi$ is a $\bP_2-$bundles was already treated in Part I (\cite{JPR}); here the only possible case is the small resolution of the quadric cone, which has index $r_X = 3$. This is already traeted in Propostion~\ref{highindex} above.

\

\noindent
{\bf From now on we shall assume that $F \ne \bP_2$ (for some or - equivalently - all fibers)}.

\begin{notation} \label{notations}
{\rm (1) We introduce the number $\lambda $ to be the maximal integer such that  
$$ H^0(-K_X - \lambda F) \ne 0.$$ 
 \\ 
(2) We recall the notations
\begin{equation} \label{not1}
 \tilde L^+ = \alpha (-K_X) + \beta F  
\end{equation}
and 
\begin{equation} \label{not2}
 \tilde F =  \alpha^+ (-K_{X^+}) + \beta^+ L^+. 
\end{equation}
If $\dim Y^+ = 1,$ then we shall write $F^+$ instead of $L^+.$ }
\end{notation} 

\begin{lemma} \label{div8}
If $K_F^2 = 8$, then $({-}K_X)^3$ is divisible by $8$.
\end{lemma}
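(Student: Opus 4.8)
The plan is to analyze the general fiber $F$ of $\phi\colon X\to \bP_1$ when $K_F^2=8$. Such a del Pezzo surface of degree $8$ is either $\bP_1\times\bP_1$ or the Hirzebruch surface $\Sigma_1=\Bl_p\bP_2$; but since we have assumed $F\ne\bP_2$ and are dealing with a genuine del Pezzo fibration of relative Picard number yielding $\rho(X)=2$, the relevant case is $F\cong\bP_1\times\bP_1$ (the $\Sigma_1$ case would force a further extremal contraction or is excluded by the setup/\cite{Mori}-type considerations for $K_F^2=7$-neighbours; this should be pinned down first). On such an $F$ we have $-K_F=\mathcal O(2,2)$, so $(-K_X)|_F=-K_F$ has even ``type'' on $F$.

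First I would restrict $-K_X$ to a general fiber $F$ and use adjunction: $(-K_X)|_F = -K_F$ since $K_X\cdot F$-contributions vanish ($F$ is a fiber of $\phi$, so $K_X|_F=K_F$ by adjunction as $N_{F/X}$ is trivial). Then I would compute $(-K_X)^3$ by the standard intersection-theoretic splitting for a fibration over a curve: write $-K_X = \phi^*\mathcal O_{\bP_1}(a) + D$ appropriately, or more cleanly, pick a line bundle $M$ on $X$ with $M|_F$ a fixed divisor class and expand $(-K_X)^3 = (-K_X)^2\cdot(-K_X)$. The key numerical point: since $\rho(X)=2$ with basis coming from $F$ and one other divisor $D$, one has $F^2=0$ (as $F$ is a fiber over $\bP_1$, so $F\cdot F = \phi^*(\text{pt})^2 = 0$), hence $(-K_X)^3 = 3(-K_X)^2\cdot F \cdot(\text{coefficient}) + \ldots$; more precisely expanding in the basis $\{F, D\}$ and using $F^2=0$ gives $(-K_X)^3 \equiv 3 c\, ((-K_X)|_F)^2 \pmod{\text{terms involving }F^2=0}$ where $c$ is the $F$-coefficient of $-K_X$. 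Because $(-K_X)|_F=-K_F$ on $\bP_1\times\bP_1$ satisfies $(-K_F)^2 = K_F^2 = 8$, and the cross term structure forces the remaining contribution to also be a multiple of $8$ (or the whole thing collapses to $3c\cdot 8$ plus $D$-only terms that are controlled), we get $8\mid(-K_X)^3$.

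The cleanest route, which I would actually write: let $H$ be a divisor on $X$ restricting to $\mathcal O(1,0)$ on the general fiber $F\cong\bP_1\times\bP_1$ (exists after possibly twisting by $F$, using that $\Pic(X)\to\Pic(F)$ and the fibration structure let us lift one ruling class; if only a multiple lifts, work with that and divide at the end). Then on $F$, $-K_F = 2(\mathcal O(1,0)+\mathcal O(0,1))$, so $(-K_X)|_F = 2(H + H')|_F$ for the other ruling $H'$, giving $-K_X \equiv 2(H+H') \pmod{\phi^*\Pic\bP_1}$, i.e. $-K_X = 2A + bF$ for some divisor $A$ with $A|_F = \mathcal O(1,1)$ and some $b\in\bZ$. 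Now expand $(-K_X)^3 = (2A+bF)^3 = 8A^3 + 12bA^2F + 6b^2AF^2 + b^3F^3$. Since $F^2 = 0$ (fiber over a curve, $\dim\bP_1=1$), the last two terms vanish: $(-K_X)^3 = 8A^3 + 12b\,(A^2\cdot F) = 8A^3 + 12b\,((A|_F)^2) = 8A^3 + 12b\cdot 2 = 8A^3 + 24b = 8(A^3+3b)$, which is divisible by $8$.

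The main obstacle is the existence of the divisor $A$ (or $H$) restricting to the right class on $F$ — i.e., whether the ruling class $\mathcal O(1,0)$, or at least $\mathcal O(1,1)=-\tfrac12 K_F$, lifts from $F$ to $\Pic(X)$. For $-\tfrac12 K_F$ this is immediate: $-K_X$ itself restricts to $-K_F = 2\cdot(-\tfrac12 K_F)$, but I need $-K_X$ to be $2$-divisible modulo $F$ in $\Pic(X)$, which need not hold a priori even though $(-K_X)|_F$ is $2$-divisible. I would handle this by noting $\Pic(X)\cong\bZ^2$ with one generator $F$; if $-K_X = mE + bF$ with $E$ a generator and $E|_F$ primitive, then $(-K_X)|_F = m(E|_F)$ must equal $-K_F$, forcing $E|_F = -\tfrac12 K_F$ and $m=2$ (the only primitive class whose double is $-K_F$ on $\bP_1\times\bP_1$, up to the two rulings, is $\mathcal O(1,1)$; $\mathcal O(1,0)$ is primitive but its multiples never hit $-K_F=\mathcal O(2,2)$). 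So automatically $E|_F=\mathcal O(1,1)$, $(E|_F)^2=2$, $m=2$, and the computation above goes through with $A=E$: $(-K_X)^3 = (2E+bF)^3 = 8E^3+24b = 8(E^3+3b)$. This closes the argument, the only care being the identification of the primitive restriction class, which I would state explicitly.
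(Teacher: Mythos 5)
Your overall strategy is the same as the paper's: exhibit a decomposition $-K_X = 2A + bF$ with $A|_F = \sO(1,1)$ on the general fiber $F \cong \bP_1\times\bP_1$ and then do the intersection computation (the paper works with $L = \frac12(-K_X + bF)$ and Riemann--Roch, obtaining $\chi(L) = 2 + 2b + \frac{(-K_X)^3}{8} \in \bZ$, which is the same divisibility extracted slightly differently; your direct expansion $(2A+bF)^3 = 8A^3 + 24b$ is correct and arguably cleaner). The problem is the step you yourself flag as ``the only care'': the existence of $A$, i.e.\ the $2$-divisibility of $-K_X$ in $\Pic(X)/\bZ F$. Your argument is that a generator $E$ of $\Pic(X)$ modulo $F$ has primitive restriction $E|_F$, whence $E|_F = \sO(1,1)$ and $m=2$. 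But primitivity of $E$ in $\Pic(X)$ does not imply primitivity of $E|_F$ in $\Pic(F)$: since $F|_F = \sO_F$, the image of the restriction map $\Pic(X) \to \Pic(F) \cong \bZ^2$ is the rank-one subgroup $\bZ\cdot(E|_F)$, and nothing you say excludes that this subgroup is $\bZ\cdot\sO(2,2)$, i.e.\ $m=1$ and $E|_F = -K_F$. In that case $(-K_X)^3 = E^3 + 24b$ and no divisibility by $8$ follows. So the hypothesis ``$E|_F$ primitive'' is exactly the assertion that needs proof; your parenthetical only classifies which primitive classes could have $-K_F$ as a multiple, it does not show that $E|_F$ is primitive.

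The paper closes this gap with Mori's structure theorem: an extremal del Pezzo fibration with $K_F^2 = 8$ embeds as a relative quadric $X \in |2\zeta + \pi^*\sO(\mu)|$ in a $\bP_3$-bundle $\PN(\sF) \to \bP_1$, so adjunction gives $-K_X = 2\zeta|_X - \pi^*\sO(c_1+\mu-2)$ and $\zeta|_X$ (restricting to $\sO(1,1)$ on $F$) is the required square root. This is a genuine geometric input. An alternative route to the same divisibility would be to observe that the generic fiber is a quadric surface over $\bC(t)$, and since the Brauer group of $\bC(t)$ vanishes (Tsen), its Picard group equals the Galois invariants of the geometric Picard group, so $\sO(1,1)$ is defined over $\bC(t)$ and its closure in $X$ provides $A$. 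Either way, some such argument must replace the primitivity claim before your computation can start. (Your preliminary discussion of why $F$ is $\bP_1\times\bP_1$ rather than the blow-up of $\bP_2$ in a point also rests on the citation of Mori, which is acceptable since the paper relies on the same reference.)
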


\begin{proof}
By \cite{Mori}, $X \subset \PN(\sF)$ for some rank $4$ bundle $\pi: \sF \to
\PN_1$. Denote the tautological line bundle on $\PN(\sF)$ by
$\zeta$ and let $X \in |2\zeta + \pi^*\sO(\mu)|$ for some integer
$\mu$. Then
 \[-K_X = 2\zeta - \pi^*\sO(c_1+\mu-2),\]
where $c_1 = c_1(\sF)$, i.e. there exists some integer $b$ such that 
 \[L = \frac{1}{2}(-K_X + \pi^*\sO(b)) \in \Pic(X).\]
Now Riemann Roch for $L$ gives
 \[\chi(L) = 2 + 2b + \frac{({-}K_X)^3}{8}\]
proving the claim.
\end{proof}

\begin{proposition} Consider the number $\lambda $ introduced in (\ref{notations}). 
If $K_F^2 < 8,$  all members of $\vert -K_X- \lambda F \vert $ are irreducible and reduced. 
\end{proposition}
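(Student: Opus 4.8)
The plan is to argue by contradiction: suppose some member $D \in |{-}K_X - \lambda F|$ is reducible or non-reduced, and derive a contradiction with the maximality of $\lambda$ or with the numerical constraints. First I would write $D = \sum a_i D_i$ with $a_i \ge 1$ and the $D_i$ the prime components, and split according to whether any component is a fiber of $\phi$. If $D$ contains a fiber $F$ as a component, then $D - F$ is effective, so $h^0({-}K_X - (\lambda+1)F) \ne 0$, contradicting the definition of $\lambda$ in Notation~\ref{notations}. Hence no component of $D$ is $\phi$-vertical, and every $D_i$ maps onto $\bP_1$ under $\phi$; equivalently each $D_i \cdot F > 0$ as curves in the surface $F$, where one should think of $-K_F = ({-}K_X)|_F$ and $D|_F \in |{-}K_F|$.

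The heart of the argument is then the restriction to a general fiber. Since $\phi$ has relative Picard number $1$ (as $\rho(X)=2$ and $\phi$ is an extremal contraction), we have $\operatorname{Pic}(X) = \bZ({-}K_X) \oplus \bZ F$ up to finite index, so each component $D_i$ is numerically $x_i({-}K_X) + y_i F$ for suitable rationals, and restricting to $F$ kills the $F$-part: $D_i|_F \equiv x_i({-}K_F)$ in $\operatorname{Pic}(F) \otimes \bQ$. Because $D|_F$ is an honest anticanonical divisor on the del Pezzo surface $F$ with $1 \le K_F^2 \le 6$ (recall $K_F^2 \ne 7$ and we have excluded $F = \bP_2$ and $K_F^2 = 8$), and $|{-}K_F|$ on such a surface has no decomposition into two moving effective $\bQ$-divisors proportional to $-K_F$ — indeed $-K_F$ is primitive in $\operatorname{Pic}(F)$ for $K_F^2 \le 8$, $F \neq \mathbb{P}^1 \times \mathbb{P}^1$, so no nontrivial positive rational multiple of $-K_F$ other than $-K_F$ itself is integral and effective — at most one component can be "horizontal" with the full anticanonical class, forcing the other components to be vertical, which we already excluded. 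More carefully: summing $D_i|_F \equiv x_i({-}K_F)$ over $i$ with multiplicities gives $\sum a_i x_i = 1$; combined with each $D_i$ being a genuine curve class on $F$ (hence $x_i D_i|_F$ effective and nonzero, so $x_i > 0$), and the fact that an effective divisor in a rational multiple $t({-}K_F)$ with $0 < t < 1$ cannot exist on a del Pezzo surface of degree $\le 6$ with $-K_F$ primitive, one gets that there is exactly one component, with $a_1 = x_1 = 1$, i.e. $D$ is prime and reduced.

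The main obstacle I anticipate is making the "no intermediate effective multiple of $-K_F$" step airtight across all the surfaces in play, i.e. del Pezzo surfaces of degree $1,2,3,4,5,6$, including the possibly reducible or non-reduced special fibers if one wants the statement for all members rather than just a general-fiber argument — one must ensure that restricting $D$ to a general fiber (where $F$ is a smooth del Pezzo) suffices, and that the components $D_i$ of $D$ do genuinely restrict to nonzero effective divisors on that general $F$, which follows precisely from the horizontality established in the first step. A secondary technical point is the primitivity of $-K_F$ in $\operatorname{Pic}(F)$: this fails for $\mathbb{P}^1 \times \mathbb{P}^1$ (degree $8$) and $\mathbb{P}^2$ (degree $9$), which is exactly why the hypothesis $K_F^2 < 8$ together with the standing assumption $F \neq \mathbb{P}^2$ is needed, so I would make sure to invoke it explicitly at that point.
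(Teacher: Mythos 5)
Your argument is correct and is essentially the paper's proof: both restrict the putative decomposition to a general fiber, use $\rho(X/Y)=1$ to see each piece is numerically $x_i(-K_F)$ there, use a $(-1)$-curve on the degree $\le 6$ del Pezzo surface (equivalently, primitivity of $-K_F$) to force $x_i\in\bZ_{\ge 0}$, and conclude that any extra piece must be $\phi$-vertical, contradicting the maximality of $\lambda$. The only difference is organizational — the paper lets one summand be vertical and derives the contradiction at the end, while you exclude vertical components first — and your worry about the ``no intermediate multiple'' step is unfounded, since the $(-1)$-curve argument settles it uniformly for $K_F^2\le 7$.
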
 

\proof  Let $R \in \vert -K_X - \lambda F \vert$ and suppose that $R = R_1 + R_2$. Since $K_F^2 < 8,$ the del Pezzo surface 
$F$ contains $(-1)-$curves, hence say $R_2 \vert F \equiv 0$ (recall that $R_i \vert F$ are proportional since $\rho(X/Y) = 1$), 
so that $R_2 = \phi^*(\sO(a)),$ contradicting
the maximality of $\lambda.$
\qed

\begin{proposition} \label{alphabeta}
In the notations (\ref{notations}) the following holds. 
\begin{enumerate} 
\item $\beta \beta^+ = 1, \ \alpha + \beta \alpha^+ = \alpha^+ + \beta^+ \alpha = 0.$ 
\item If $ K_F^2 \leq 6$ and if there exists a rational curve $l^+ \subset X^+$ with $-K_{X^+} \cdot l^+ = 1$ and $L^+ \cdot l^+ = 0,$ then 
$\beta = \beta^+ = -1$ and $\alpha = \alpha^+ \in \bN.$ 
\item If $K_F^2 \leq 6$ and a curve $l^+$ as in (2) does not exist, then either (2) holds or $(\beta,\beta^+) = (-2,-{{1} \over {2}}).$ 
\item If $ K_F^2 = 8,$ then either (2) holds or $(\beta,\beta^+) = (-{{1} \over {2}},-2), (-2,-{{1} \over {2}}).$
\item Let $D \subset X$ be an irreducible effective divisor with strict transform $\tilde D \subset  X^+.$ Then 
$$ K_X^2 \cdot D = K_{X^+}^2 \cdot \tilde D $$
and $$ K_X \cdot D^2 = K_{X^+} \cdot \tilde D^2.$$
\end{enumerate}  
\end{proposition}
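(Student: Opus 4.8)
The plan is to prove the five assertions more or less in the order stated, since (1) is the backbone and (5) is a standalone computation. For (1), the key point is that $\chi\colon X\dashrightarrow X^+$ is an isomorphism in codimension $1$, so it identifies $\Pic(X)\otimes\bR$ with $\Pic(X^+)\otimes\bR$ and preserves intersection numbers of divisors with curves not contracted by $\psi$. Under this identification, the pairs of classes $\{-K_X,F\}$ and $\{-K_{X^+},L^+\}$ are two bases of the same rank-$2$ lattice (up to the torsion-free quotient), and the relations \eqref{not1} and \eqref{not2} express one basis in terms of the other after passing through the flop. The idea is to write $\tilde F$ in the basis $\{-K_{X^+},L^+\}$ as in \eqref{not2}, then flop back: the strict transform of $\tilde F$ is $F$ again, and the strict transform of $-K_{X^+}$ is $-K_X$, while the strict transform of $L^+$ is $\tilde L^+=\alpha(-K_X)+\beta F$ by \eqref{not1}. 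So $F=\alpha^+(-K_X)+\beta^+\bigl(\alpha(-K_X)+\beta F\bigr)=(\alpha^++\beta^+\alpha)(-K_X)+\beta^+\beta F$, and since $-K_X,F$ are linearly independent in $\Pic(X)_{\bR}$ we read off $\beta\beta^+=1$ and $\alpha^++\beta^+\alpha=0$; the symmetric relation $\alpha+\beta\alpha^+=0$ follows by running the same argument the other way (express $-K_{X^+}$ and push forward), or simply by solving the linear system.

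For (2)--(4) the strategy is to pin down $\beta$ using the geometry of the extremal contraction $\phi^+\colon X^+\to Y^+$ together with the nef cone. The edge of $\overline{NE}(X^+)$ other than the $\psi^+$-negative ray is the ray contracted by $\phi^+$; on that ray $L^+$ is trivial (it is pulled back from $Y^+$) and $-K_{X^+}$ is positive. If there is a rational curve $l^+$ with $-K_{X^+}\cdot l^+=1$ and $L^+\cdot l^+=0$, then $l^+$ generates that extremal ray and computes the relevant intersection numbers. Writing $\tilde F=\alpha^+(-K_{X^+})+\beta^+L^+$ and intersecting with $l^+$ gives $\tilde F\cdot l^+=\alpha^+$; on the other hand $\tilde F\cdot l^+$ must be a nonnegative integer (it is the strict transform of an effective divisor hitting a curve not contracted by $\psi^+$, and the flop preserves this), forcing $\alpha^+\in\bN$, and then integrality of the basis change (both $\{-K_X,F\}$ and $\{-K_{X^+},L^+\}$ generate the same lattice up to torsion) forces $\beta,\beta^+\in\bZ$, whence $\beta\beta^+=1$ gives $\beta=\beta^+=\pm1$; the sign is fixed by noting $-K_X-\lambda F$ effective for $\lambda\ge0$ together with $F\cdot l_\psi>0$ (after possibly flopping, as in Proposition~\ref{hyp}) rules out $\beta=+1$. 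When no such $l^+$ exists, the contracted ray is generated by a curve $l^+$ with $-K_{X^+}\cdot l^+=2$ (the only other numerical possibility when $K_F^2\le 6$, a del Pezzo fibration or conic bundle length computation), and the same intersection argument gives $2\beta^+\in\bZ$, leaving the extra possibility $(\beta,\beta^+)=(-2,-\tfrac12)$; for $K_F^2=8$ the fiber $F\simeq\PN_1\times\PN_1$ admits a ruling with $-K_X\cdot(\text{ruling})=2$, so symmetrically $2\beta\in\bZ$ is also possible, producing the third listed pair. The main obstacle here is the case analysis for the length of the extremal ray of $\phi^+$ and checking that the candidate values of $(\beta,\beta^+)$ are actually compatible with $\tilde F$ being an honest effective divisor class — I expect this to require invoking Mori's classification of extremal rays on threefolds and possibly the constraint $(-K_X)^3\le 22$ from the setup.

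For (5), the argument is purely a matter of intersection theory across a flop. Since $\chi$ is an isomorphism outside the $\psi$-exceptional curves $l_\psi$, which have codimension $2$, and $D$, $\tilde D$ are the strict transforms of one another, we compare $K_X^2\cdot D$ with $K_{X^+}^2\cdot\tilde D$ by resolving the flop: let $p\colon Z\to X$ and $p^+\colon Z\to X^+$ be a common resolution. Then $p^*K_X=(p^+)^*K_{X^+}$ because both equal the pullback of $K_{X'}$ (as $\psi$ and $\psi^+$ are crepant, $X'$ terminal Gorenstein), and $p^*D=(p^+)^*\tilde D+(\text{exceptional})$ where the exceptional correction is supported over the $l_\psi$'s; since $K_X$ (hence its pullback) is trivial on curves over $l_\psi$ and the exceptional divisors project to curves, the correction terms contribute $0$ to both $K_X^2\cdot D$ and $K_{X^+}^2\cdot\tilde D$, giving the first equality. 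For the second, $K_X\cdot D^2$: again push to $Z$, write $p^*D=(p^+)^*\tilde D+E$ with $E$ $p$-exceptional, expand $p^*(K_X\cdot D^2)=p^*K_X\cdot(p^*D)^2=(p^+)^*K_{X^+}\cdot\bigl((p^+)^*\tilde D+E\bigr)^2$, and observe that the cross terms and $E^2$ term, when capped against $(p^+)^*K_{X^+}$, vanish because $K_{X^+}$ pulls back trivially along the contracted curves and $E$ lives over those curves. So $K_X\cdot D^2=K_{X^+}\cdot\tilde D^2$. This step is routine once the crepancy $p^*K_X=(p^+)^*K_{X^+}$ is in hand, which is exactly what Proposition~\ref{antmodel}-style reasoning and the proof of the first Proposition in the excerpt already supply.
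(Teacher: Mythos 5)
Your argument for (1)--(4) is essentially the paper's: substitute \eqref{not1} into \eqref{not2} to get $\beta\beta^+=1$ and $\alpha+\beta\alpha^+=\alpha^++\beta^+\alpha=0$, then force integrality of the coefficients by intersecting with extremal curves of anticanonical degree $1$ (or only $2$, which is what produces the half-integral options when such curves are missing on one side or when $K_F^2=8$). Two points in your write-up are looser than they should be. First, the phrase ``both bases generate the same lattice up to torsion'' is close to assuming what is being proved; the actual mechanism is that you need an integrality argument \emph{on each side separately} --- a $(-1)$-curve $l_\phi$ in the fiber $F$ (this is exactly where $K_F^2\le 6$ enters) gives $\alpha=\tilde L^+\cdot l_\phi\in\bZ$ and hence $\beta\in\bZ$, while the hypothesized $l^+$ gives $\alpha^+,\beta^+\in\bZ$; only with \emph{both} $\beta,\beta^+\in\bZ$ does $\beta\beta^+=1$ force $\beta=\beta^+=\pm1$. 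Second, your justification of the sign is off target: $\beta<0$ comes from the fact that $L^+$ is positive on the $\psi^+$-exceptional curves (being trivial only on the $\phi^+$-ray), so its strict transform $\tilde L^+$ is \emph{negative} on the flopped curves $l_\psi$, on which $-K_X$ is trivial and $F$ is positive; the effectivity of $-K_X-\lambda F$ plays no role here. For (5) you take a genuinely different route from the paper: you pass to a common resolution, use crepancy $p^*K_X=(p^+)^*K_{X^+}$ and the projection formula to kill the exceptional correction terms (noting $(p^+)_*E=0$ for the cross terms and $K_{X^+}\cdot l_{\psi^+}=0$ for the $E^2$ term). This is correct and more robust, but heavier than the paper's one-line argument, which simply represents the spanned bundle $-K_X$ by general members of $|{-}K_X|$ disjoint from the finitely many flopped curves, so that all the intersections in question are computed in the open set where $\chi$ is an isomorphism.
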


\proof 
(1) follows by inserting \eqref{not1} into \eqref{not2} and vice versa and by comparing coefficients. \\
(2) In the decomposition
$$ \tilde L^+ = \alpha (-K_X) + \beta F, \eqno (+)$$
the numbers numbers $\alpha, \beta$ are rational a priori. Suppose that $K_F^2 \leq 6.$ 
Let $l_{\phi}$ be a $(-1)-$curve in $F$ and intersect with (+):
$$ \tilde L^+ \cdot l_{\phi} = \alpha + 0, $$
hence $\alpha \in \bN.$ 
Thus $\beta F$ is Cartier so that $\beta \in \bZ.$ Similarly, the
existence of $l^+$ gives $\alpha \in \bN$ and $\beta^+ \in \bZ.$
Since $\beta < 0,$ the claim (2) follows. \\
(3) and (4) are done in the same way. We just observe that if there is no curve $l^+$ with $-K_{X^+} \cdot l^+ = 1$, then at least
we can find $l^+$ such that  $-K_{X^+} \cdot l^+ = 2.$  \\
(5) Finally for (5)  just represent the spanned line bundle $-K_X$ by general members not meeting the exceptional locus of $\psi.$  
\qed

\begin{proposition} If $\lambda = 0,$ then $\beta \ne  -{{1} \over {2}}.$ 
\end{proposition}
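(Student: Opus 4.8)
The statement to prove is: if $\lambda = 0$, then $\beta \neq -\tfrac{1}{2}$. Here $\lambda = 0$ means $H^0(-K_X - F) = 0$ while $H^0(-K_X) \neq 0$, and by Proposition~\ref{alphabeta} the value $\beta = -\tfrac12$ can only occur when $K_F^2 = 8$ (cases (3) and (4)); but we have reduced to $F \neq \bP_2$, so this forces $F = \bP_1 \times \bP_1$, and by Lemma~\ref{div8} then $(-K_X)^3$ is divisible by $8$, hence $(-K_X)^3 \in \{8,16\}$ given the bound $(-K_X)^3 \leq 22$. The plan is to argue by contradiction: assume $\beta = -\tfrac12$, so $\tilde L^+ = \alpha(-K_X) - \tfrac12 F$ with $\alpha \in \tfrac12\bZ$ (or $\alpha \in \bN$ as recorded in the proof of Proposition~\ref{alphabeta}), and derive a contradiction with $\lambda = 0$.

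First I would pin down $\alpha$. Since $\tilde L^+$ is a genuine line bundle on $X^+$ (the pullback of an ample generator, twisted), and $F = \phi^*\sO(1)$, intersecting $\tilde L^+$ with a fibre $F$ of $\phi$: a general fibre $F$ is a smooth quadric surface, and $(-K_X)|_F = -K_F$ which has self-intersection $8$, while $F|_F = 0$; so on $F$ we get $\tilde L^+|_F = \alpha(-K_F)$, which must be an integral class on $F \simeq \bP_1\times\bP_1$ — that only says $2\alpha \in \bZ$. To get more I would use the relations in Proposition~\ref{alphabeta}(1): $\beta\beta^+ = 1$ gives $\beta^+ = -2$, and $\alpha + \beta\alpha^+ = 0$ gives $\alpha = \tfrac12\alpha^+$, with $\alpha^+$ the coefficient in $\tilde F = \alpha^+(-K_{X^+}) + \beta^+ L^+$. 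Since $\tilde F$ is an honest divisor on $X^+$ and a $(-1)$-curve argument on the fibres of $\phi^+$ (exactly as in the proof of (2)–(4)) gives $\alpha^+ \in \bZ$, we conclude $\alpha^+ \geq 1$ and hence $\alpha = \tfrac{k}{2}$ with $k = \alpha^+ \geq 1$ a positive integer.

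Next, the core of the argument: I want to convert the existence of the effective divisor $\tilde F$ on $X^+$ back to an effective divisor on $X$ and compare with $-K_X - F$. Because $\chi \colon X \dashrightarrow X^+$ is an isomorphism in codimension $1$, $h^0$ of a divisor class is preserved (Proposition at the start: $h^0(\sO_X(D)) = h^0(\sO_{X^+}(\tilde D))$). So $h^0(X^+, \tilde F) = h^0(X, F) = 2$ (since $F = \phi^*\sO(1)$). Now transport $\tilde F = \alpha^+(-K_{X^+}) + \beta^+ L^+ = k(-K_{X^+}) - 2L^+$ back to $X$: its strict transform is $k(-K_X) - 2\tilde L^+$. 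Substituting $\tilde L^+ = \tfrac{k}{2}(-K_X) - \tfrac12 F$ gives $k(-K_X) - 2\big(\tfrac{k}{2}(-K_X) - \tfrac12 F\big) = F$ — consistent, as it must be, but not yet a contradiction. The real leverage must come from $\tilde L^+$ itself being effective (it is a pullback of something with sections on $Y^+$, so $h^0(\tilde L^+) \geq 2$, or at least $\geq 1$): then $h^0(X, \alpha(-K_X) + \beta F) = h^0(X, \tfrac{k}{2}(-K_X) - \tfrac12 F) \geq 1$. If $k$ is even, write $k = 2m$; then $m(-K_X) - \tfrac12 F$ effective — but this is not integral unless we are careful, so in fact $k$ must be odd for $\tilde L^+$ to be a line bundle with $\beta = -\tfrac12$ (a half-integer coefficient of $F$ forces the coefficient of $-K_X$ to interact with the two-torsion-free structure); I would check that $k$ odd, say $k = 2m+1$, makes $\tilde L^+ = m(-K_X) + \tfrac12(-K_X - F)$, whence $h^0(\tilde L^+) \geq 1$ together with $-K_X$ spanned would let me peel off $m(-K_X)$ and conclude $h^0\big(\tfrac12(-K_X - F)\big) \geq 1$, and a fortiori — using that $-K_X$ is effective and moves — that $h^0(-K_X - F) \geq 1$, contradicting $\lambda = 0$.

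The main obstacle I anticipate is the last inference: making rigorous the step "$\tfrac12(-K_X - F)$ effective (or $\tilde L^+$ effective) $\Rightarrow$ $-K_X - F$ effective." Strictly, $\tfrac12(-K_X-F)$ need not be a line bundle at all; the honest statement is that $\tilde L^+$ is effective and equals $m(-K_X) + \tfrac12(-K_X-F)$ as a $\bQ$-divisor, so $2\tilde L^+ = (2m+1)(-K_X) - F$ is effective, i.e. $h^0\big((2m+1)(-K_X) - F\big) \geq 1$. To finish I would argue that since $-K_X$ is base-point-free and $\phi$-positive ($-K_X \cdot l_\phi = K_F^2/(\deg) > 0$ appropriately, or more simply $-K_X|_F = -K_F$ is ample on the del Pezzo fibre), an effective member of $(2m+1)(-K_X) - F$ together with $2m$ general anticanonical members (which meet every fibre) forces, by a residuation/fibre-degree bookkeeping along $\phi$, an effective member of $-K_X - F$ — the point being that each fibre can absorb at most a bounded amount of $-K_X$ before $-F$ becomes unsupportable. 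I would compute the degree of these classes on a fibre: $\big((2m+1)(-K_X) - F\big)|_F = (2m+1)(-K_F)$, while $(-K_X - F)|_F = -K_F$; the restriction map $H^0(X, -K_X) \to H^0(F, -K_F)$ is surjective (Kawamata–Viehweg, since $-K_X - F$ is big and nef on a threefold once we know $\lambda \geq 0$... ), and this surjectivity, pushed through, is what ultimately yields $H^0(-K_X - F) \neq 0$. This cohomological surjectivity step, ensuring the half-anticanonical "remainder" descends to a genuine $-K_X - F$ section, is where I expect the real work to lie; everything else is coefficient bookkeeping via Proposition~\ref{alphabeta} and Lemma~\ref{div8}.
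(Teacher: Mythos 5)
Your setup is sound and runs parallel to the paper's up to the point where you produce (implicitly) the line bundle $M:=\tilde L^+-m(-K_X)$ with $2M=-K_X-F$: the observation that $\alpha$ must be a non-integral half-integer, so that $2\tilde L^+=(2m+1)(-K_X)-F$, is exactly the paper's first step. The genuine gap is the final inference, which you yourself flag as "where the real work lies": from the effectivity of $2\tilde L^+=(2m+1)(-K_X)-F$ you cannot conclude that $-K_X-F$ is effective. Peeling $2m$ anticanonical divisors off an effective member of $(2m+1)(-K_X)-F$ is not a valid operation, and the heuristic that "each fibre can absorb only a bounded amount of $-K_X$ before $-F$ becomes unsupportable" fails: the restriction of $(2m+1)(-K_X)-F$ to a fibre is $(2m+1)(-K_F)$, which is very ample, so there is no fibrewise obstruction at all; for $m\gg 0$ the class $(2m+1)(-K_X)-F$ is effective simply because $-K_X$ is big, independently of $\lambda$. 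The auxiliary surjectivity $H^0(X,-K_X)\to H^0(F,-K_F)$ you invoke is also false in this situation: it would force $h^0(-K_X)\ge h^0(-K_F)=9$, i.e. $(-K_X)^3\ge 12$, whereas $\lambda=0$ gives $\phi_*(-K_X)=\sO^a\oplus\sO(-1)^b$ with $a+b=9$ and $a=h^0(-K_X)=\frac{(-K_X)^3}{2}+3$, hence $(-K_X)^3=2a-6\le 12$ (and in fact $=8$).

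The missing idea, and what the paper does with the same bundle $M$, is to apply Riemann--Roch to a suitable \emph{twist} of $M$ rather than to descend effectivity from a large multiple of $-K_X$. Since $M+F=\frac12(-K_X+F)$ is ample, Kodaira vanishing and Riemann--Roch give $h^0(M+F)=\frac{(-K_X)^3}{8}+4$; combined with the bound $(-K_X)^3\le 12$ above this forces $(-K_X)^3=8$ and $h^0(M+F)=5$. On the other hand, the restriction sequence $0\to H^0(M)\to H^0(M+F)\to H^0(M|_F)$, together with $H^0(M)=0$ (because $2M=-K_X-F$ and $\lambda=0$) and $h^0(M|_F)=h^0(-\tfrac12 K_F)=h^0(\sO_{\bP_1\times\bP_1}(1,1))=4$, gives $h^0(M+F)\le 4$, a contradiction. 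Without some such cohomological count pinning down $h^0$ of a twist of $M$ against its restriction to a fibre, your argument does not close.
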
 

\proof 
Assume $\lambda = 0$ and $\beta = -\frac{1}{2}$.
Observe that $\alpha \not \in \bN,$ because otherwise $F$ would be
divisible in ${\rm Pic}(X).$ Hence we find a line bundle $M$ such that
$$ -K_X = F + 2M.$$  
Since $\lambda = 0$ 
we have $H^0(-K_X-F) = 0$, and therefore $\phi_*(-K_X)$ has the form
$$ \phi_*(-K_X) = \sO^a \oplus \sO(-1)^b.$$ 
Since $a+b = K_F^2 + 1 = 9$ and 
$ a = h^0(-K_X) = {{(-K_X)^3} \over {2}} + 3,$ we obtain 
$ (-K_X)^3 = 2a - 6,$
and we must have $$(-K_X)^3 \leq 12.$$
The line bundle $M+F = {{1} \over {2}}(-K_X + F) $ is ample and by Kodaira vanishing and Riemann-Roch we obtain
$$ h^0(M+F) = \chi(M+F) = {{(-K_X)^3} \over {8}} + 4.$$ 
Hence $(-K_X)^3 = 8$ and $h^0(M+F) = 5.$ 
Consider the exact sequence
$$ 0 \to H^0(M) \to H^0(M+F) \to H^0((M+F) \vert F) = H^0(M \vert F). $$
Since $\lambda = 0,$ we have $H^0(M) = 0; $ moreover $M \vert F = {{-K_F} \over {2}}$, so that $h^0((M+F) \vert F) = 4.$
This contradicts the exact sequence. 
\qed 

\begin{proposition} \label{kx2kf}
Suppose that $\dim Y^+ = 1$ and that $\beta = -1.$ Then $\alpha (-K_X)^3 = 2 K_F^2.$ 
\end{proposition}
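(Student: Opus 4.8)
The plan is to read off the identity from the invariance of the intersection number $K\cdot D^2$ under the flop (Proposition~\ref{alphabeta}(5)), applied to a general fibre of $\phi^+$. First I would collect the numerical input. Since $\dim Y^+ = 1$, the extremal contraction $\phi^+\colon X^+\to Y^+\cong\bP_1$ is a del Pezzo fibration and $F^+=L^+$ is the class of one of its fibres; as $[F^+]$ is pulled back from a point of $\bP_1$, we have $[F^+]^2\equiv 0$ as a $1$-cycle, so $K_{X^+}\cdot(F^+)^2=0$. On $X$, a general fibre $F$ of $\phi$ is a smooth del Pezzo surface with $[F]^2\equiv 0$, whence $K_X\cdot F^2=0$ and, by adjunction, $(-K_X)^2\cdot F=(K_X|_F)^2=K_F^2$.

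Next I would transport a general fibre $F^+$ of $\phi^+$ back to $X$. Since $\psi^+$ is small, its exceptional locus is a union of finitely many curves, so the general surface $F^+$ is not contained in it and its strict transform $D\subset X$ is a genuine irreducible effective divisor. By the definition of $\tilde L^+$ in Notation~\ref{notations} together with the hypothesis $\beta=-1$, the class of $D$ is $\tilde L^+=\alpha(-K_X)-F$. The strict transform of $D$ back to $X^+$ is again $F^+$, so Proposition~\ref{alphabeta}(5) yields $K_X\cdot D^2=K_{X^+}\cdot(F^+)^2=0$. Substituting $D\equiv\alpha(-K_X)-F$ and using the data above,
$$0=K_X\cdot\bigl(\alpha(-K_X)-F\bigr)^2=-\alpha^2(-K_X)^3+2\alpha\,(-K_X)^2\cdot F-K_X\cdot F^2=-\alpha^2(-K_X)^3+2\alpha K_F^2 .$$

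It then remains to divide by $\alpha$. Here $\alpha\in\bN$ by Proposition~\ref{alphabeta}(2)--(4) (as $\beta=-1$), and $\alpha\neq 0$ because $\tilde L^+=-F$ cannot be effective while $D$ is; equivalently, $0\le D\cdot(-K_X)^2=\alpha(-K_X)^3-K_F^2$ already forces $\alpha>0$. Dividing by $\alpha$ gives $\alpha(-K_X)^3=2K_F^2$. I do not expect a real obstacle in this argument; the only mildly delicate points are unwinding Notation~\ref{notations} to identify the class of the strict transform of $F^+$, and verifying that this strict transform qualifies as an irreducible effective divisor for Proposition~\ref{alphabeta}(5) — both immediate consequences of $\psi$ and $\psi^+$ being small.
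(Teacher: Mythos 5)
Your proposal is correct and is essentially the paper's own argument: the paper's one-line proof also derives the identity from $K_X\cdot(\tilde F^+)^2=0$ (the flop-invariance of Proposition~\ref{alphabeta}(5)) together with the expansion $\tilde F^+=\alpha(-K_X)-F$. Your additional checks (that the strict transform is an honest irreducible divisor and that $\alpha>0$) are sound and merely make explicit what the paper leaves implicit.
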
 

\proof This is a consequence of  $K_X \cdot (\tilde F^+)^2 = 0$ and (\ref{alphabeta}). 
\qed 

\vskip .2cm \noindent
First we consider the ``exotic'' cases in \ref{alphabeta}.  

\begin{proposition} \label{beta}
Assume $\dim Y^+ = 1$ (and that $X^+$ has index
  $1$). Suppose $K_F^2 = 8.$ Then $\beta = -1$ except $K_{F^+}^2 = 4$,
  $({-}K_X)^3 = 16$, and $(\alpha^+, \beta^+) = (1, -2)$. This case
  really exists and $X^+ \subset \phi^+_*(-K_{X_+}) = \PN(\sO(2)^{\oplus 2} \oplus \sO(1)^{\oplus 2} \oplus \sO)$ may be realized as a complete intersection of two general sections in $|2\zeta -2F^+|$.
\end{proposition}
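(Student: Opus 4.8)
The plan is to exploit the symmetry between the two del Pezzo fibrations $\phi:X\to\bP_1$ and $\phi^+:X^+\to Y^+$ together with the numerical identities of Proposition~\ref{alphabeta}. Since $K_F^2=8$, Proposition~\ref{alphabeta}(4) tells us that either $\beta=-1$ (the case asserted in (2), which also forces $\alpha=\alpha^+\in\bN$), or $(\beta,\beta^+)\in\{(-\tfrac12,-2),(-2,-\tfrac12)\}$. So the work is to rule out $(\beta,\beta^+)=(-\tfrac12,-2)$ altogether, and to show that $(\beta,\beta^+)=(-2,-\tfrac12)$ forces $K_{F^+}^2=4$, $({-}K_X)^3=16$, $(\alpha^+,\beta^+)=(1,-2)$, and finally to exhibit the model.

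First I would dispose of $\beta=-\tfrac12$. By Proposition~\ref{alphabeta}(1), $\beta\beta^+=1$ forces $\beta^+=-2$, so $\tilde F=\alpha^+(-K_{X^+})-2F^+$ with $\alpha^+\in\bZ$ (since $F^+$ is the fiber class of $\phi^+$, pairing $\tilde F$ with a line in a general fiber of $\phi^+$ shows $\alpha^+$ is a nonnegative integer; it is positive because $\tilde F$ is a nonzero effective divisor). The relation $\beta=-\tfrac12$ means $-K_X=2M+F$ for some $M\in\Pic(X)$ (as in the earlier $\lambda=0$ Proposition), i.e. $X$ has a square root of $-K_X+(-F)$; I would feed this through the $K_F^2=8$ analysis of Lemma~\ref{div8} (there $X\subset\PN(\sF)$ with $\sF$ of rank $4$ on $\PN_1$ and $-K_X=2\zeta-\pi^*\sO(c_1+\mu-2)$) and a Riemann–Roch computation for $M$ on $X$ analogous to the one in the proof of Lemma~\ref{div8}, to derive a contradiction with the integrality or positivity forced by $H^0(\sO_X(M))$; this should parallel the $\lambda=0$, $\beta=-\tfrac12$ argument almost verbatim. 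This is the step I expect to be the main obstacle: isolating exactly which numerical invariant is violated, since unlike the $\lambda=0$ case we do not get the clean bound $({-}K_X)^3\le 12$ for free and must work with the $\PN(\sF)$-structure.

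Next, for $(\beta,\beta^+)=(-2,-\tfrac12)$: here $\tilde L^+=\alpha(-K_X)-2F$, so $X^+$ itself has $-K_{X^+}=2M^++\tilde F$ for some $M^+$, hence $K_{F^+}^2$ is even (a del Pezzo surface admitting a half-anticanonical divisor on the ambient threefold forces this parity, as in the $\beta=-\tfrac12$ discussion applied to $X^+$); combined with $K_{F^+}^2\le 6$ from the running assumption $F^+\ne\PN_2$ and the need for $X^+$ to be a genuine del Pezzo fibration with index $1$, this leaves $K_{F^+}^2\in\{2,4,6\}$. I would then pin down $({-}K_X)^3=({-}K_{X^+})^3$ via Proposition~\ref{kx2kf}-type relations: from $\tilde F=\alpha^+(-K_{X^+})+\beta^+L^+$ with $\beta^+=-\tfrac12$ and $K_{X^+}\cdot\tilde F^2=0$ together with Proposition~\ref{alphabeta}(5), extract a linear relation among $\alpha^+$, $K_{F^+}^2$ and $({-}K_{X^+})^3$; the constraint $({-}K_X)^3\le 22$ from the Setup plus $8\mid({-}K_X)^3$ (Lemma~\ref{div8}) narrows $({-}K_X)^3$ to $\{8,16\}$, and a Riemann–Roch/effectivity check on $M^+$ kills $8$ and $K_{F^+}^2\ne 4$, leaving exactly $({-}K_X)^3=16$, $K_{F^+}^2=4$, $\alpha^+=1$. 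Finally, for existence: with $K_{F^+}^2=4$ the general fiber of $\phi^+$ is a degree-$4$ del Pezzo, a complete intersection of two quadrics in $\PN_4$, so $X^+$ sits in $\PN(\sE)$ for $\sE=\phi^+_*(-K_{X^+})$ a rank-$5$ bundle on $\bP_1$; computing $\phi^+_*(-K_{X^+})$ from the numerical data (degree and splitting type, using $({-}K_{X^+})^3=16$ and $H^0(-K_{X^+}-\lambda^+F^+)$) yields $\sE=\sO(2)^{\oplus2}\oplus\sO(1)^{\oplus2}\oplus\sO$, and one checks that two general members of $|2\zeta-2F^+|$ cut out a smooth threefold with the required big-and-nef-not-ample $-K$, small $\psi^+$, and $\rho=2$ — a Bertini argument for smoothness and a direct intersection-number computation for $({-}K)^3=16$ and the non-ampleness.

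Modulo the routine Riemann–Roch and Bertini verifications, the crux is the first paragraph — the exclusion of $\beta=-\tfrac12$ — because that is where the $\PN(\sF)$-structure from Mori's classification must be combined by hand with the flop numerics, and it is not a verbatim copy of any earlier argument in the excerpt.
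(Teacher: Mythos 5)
Your plan inverts the two exotic cases, and the step you yourself single out as the crux --- ``the exclusion of $\beta=-\tfrac12$'' --- is an attempt to prove something false. With the normalization $\tilde F^+=\alpha(-K_X)+\beta F$ and $\tilde F=\alpha^+(-K_{X^+})+\beta^+F^+$, the exceptional case of the proposition \emph{is} $(\beta,\beta^+)=(-\tfrac12,-2)$: there $\alpha\notin\bZ$ (else $F$ would be $2$-divisible), so $-K_X=F+2M$; the earlier proposition ($\lambda=0\Rightarrow\beta\neq-\tfrac12$) gives $\lambda\ge 1$, cubing $-K_X-F=2M$ gives $(-K_X)^3\in\{8,16\}$, Riemann--Roch and Leray give $h^0(M)\ge 1$, whence $\alpha=\tfrac12$ and $M=\tilde F^+$; then $K_X\cdot(\tilde F^+)^2=0$ yields $\alpha(-K_X)^3=K_F^2=8$, so $(-K_X)^3=16$, and $K_{X^+}\cdot\tilde F^2=0$ yields $K_{F^+}^2=4$ with $(\alpha^+,\beta^+)=(1,-2)$. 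This configuration is then realized by an explicit complete intersection in $\PN(\sO(2)^{\oplus 2}\oplus\sO(1)^{\oplus 2}\oplus\sO)$, so no contradiction can be extracted from $H^0(M)$ or from Mori's $\PN(\sF)$-structure: the case exists, and the main obstacle you flag is insurmountable by design.

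Conversely, your second paragraph claims that $(\beta,\beta^+)=(-2,-\tfrac12)$ forces $(\alpha^+,\beta^+)=(1,-2)$, which is internally inconsistent ($\beta^+$ cannot be both $-\tfrac12$ and $-2$), and your bound $K_{F^+}^2\le 6$ is unavailable there: $F^+\neq\PN_2$ only excludes $9$, while $\beta^+=-\tfrac12$ forces, by the integrality argument of Proposition~\ref{alphabeta}, that $F^+$ carries no curve of anticanonical degree $1$, hence $K_{F^+}^2=8$ (also $-K_{X^+}=F^++2M^+$ makes $K_{F^+}^2$ divisible by $4$, not merely even). Running the first argument with the roles of $X$ and $X^+$ interchanged then gives $K_F^2=4$, contradicting the hypothesis $K_F^2=8$; this one-line symmetry is how the paper disposes of $(-2,-\tfrac12)$. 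In short, it is $(-2,-\tfrac12)$ that gets excluded and $(-\tfrac12,-2)$ that survives as the stated exception, exactly opposite to your division of labor. Your existence sketch for the degree-$4$ model is otherwise in the right spirit, but it omits the point that makes $\psi^+$ small, namely that the section $C_0$ corresponding to the trivial summand of $\sE$ lies on both members of $|2\zeta-2F^+|$ and is the entire exceptional locus.
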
 

\proof We may assume that $\beta = -{{1} \over {2}};$ the other case follows by interchanging the roles of $X$ and $X^+.$ 
Recall that $\alpha \not \in \bN,$ because otherwise $F$ would be
divisible in ${\rm Pic}(X).$ Hence we find a line bundle $M$ such that
$$ -K_X = F + 2M.$$  
By (3.?), $\lambda \geq 1.$ 
By cubing the equation $-K_X - F = 2M,$ we obtain $(-K_X)^3 = 8,16$ with $M^3 = -2, -1.$ 
Then Riemann-Roch gives $\chi(M) \geq 1.$ Since $h^q(M) = 0$ for $q \geq 2$ (apply the Leray spectral sequence to $\phi: X \to Y = \bP_1$), we have
$$ h^0(M) \geq 1.$$ 
Recall 
$$ \tilde F^+ = \alpha (-K_X) -{{1} \over {2}}F$$
with $\alpha$ a half-integer. 
Thus $2 \tilde  F^+ = 2\alpha (-K_X) - F$, and from $h^0(2 \tilde F^+)
= 3$ and $\lambda \geq 1$ we deduce $\alpha = {{1} \over {2}}$. We obtain 
$M = \tilde F^+$ and hence $\lambda^+ = 2$.
The  equation $K_X \cdot (\tilde F^+)^2 = 0$ gives 
$$ \alpha (-K_X)^3 = K_F^2 = 8, $$
so that $(-K_X)^3 = 16.$ 
Dually, $K_{X^+} \cdot (\tilde F)^2 = 0$ yields
$$ \alpha^+ (-K_X)^3 = 4 K_{F^+}^2,$$
and with $\alpha^+ = 2 \alpha,$ we get $K_{F^+}^2 = 4.$  
This case in fact exists: First note $\lambda^+ = 2$ and $h^0(X^+,
-K_{X^+} -2F^+) = h^0(X, F) = 2$. Let
 \[\sE = \phi^+_*(-K_{X^+}) = \sO(2)^2 \oplus \sO(1)^a + \sO^b +
 \sO^c.\]
Then $h^0(-K_{X^+}) = 11$ and $K_{F^+}^2 = 4$ gives $2a+b = 5$ and
$a+b+c = 3$, hence $a = 2, b = 1$ and $c = 0$. Take $X^+ \subset
\PN(\sE)$ a complete intersection of two sections 
 \[Q_i \in |\zeta -2F^+|, \quad i=1,2\]
where the fiber of $\PN(\sE)$ is denoted by $F^+$ as well. For $Q_i$
general then $X^+$ is smooth with $-K_{X^+} =
\sO_{\PN(\sE)}(1)|_{X^+}$ is big and nef. Moreover, the exceptional
curve $C_0$  corresponding to the only trivial summand of $\sE$ is
contained in $Q_1$ and $Q_2$, hence $C_0 \subset X^+$ is the
exceptional locus of $\psi^+$. By construction, $X^+$ admits a del
Pezzo fibration with $K_{F^+}^2 = 4$.

Concerning the flop note first that $N_{C_0/X^+}$ is of type
$(-1,-1)$, hence $X^+ \dasharrow X$ is a simple flop. The linear
system 
 \[|\zeta -2F^+|\]
on $\PN(\sE)$ defines a rational map onto $\PN_1$ with base locus a
threefold $Z$ containing $C_0$. It is easy to see that $Z \cap Q_1
\cap Q_2 = C_0$, hence $X$ admits a del Pezzo fibration with $K_F^2 =
(-K_X -2F^+)\cdot K_{X^+}^2 = 8$. 
\qed 


\begin{proposition} \label{dpdp}
Suppose $\dim Y^+ = 1$. Then either
 \begin{enumerate}
  \item $\lambda = 1$ and $(({-}K_X)^3, K_F^2) = (2,1)$, $(4,2)$, $(6,3)$, $(10,5)$, $(12,6)$, where the first three cases definitely exist; or
 \item $\lambda = 0$ and $(({-}K_X^3), K_F^2) = (2,3)$, $(2,4)$, $(2,5)$, $(2,6)$, $(4,4)$, $(4,6)$, $(6,6)$, where the cases $(2,3)$, $(2,4)$ and $(4,4)$ definitely exist.
 \end{enumerate}
The existence of the remaining cases is open.
\end{proposition}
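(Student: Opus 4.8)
The plan is to analyze both contractions simultaneously using the relations in Proposition~\ref{alphabeta} together with the numerical constraints coming from Riemann--Roch and the classification bound $({-}K_X)^3 \le 22$. Since we are assuming $\dim Y^+ = 1$, both $\phi\colon X \to \bP_1$ and $\phi^+\colon X^+ \to \bP_1$ are del Pezzo fibrations, with general fibers $F$ and $F^+$. By Proposition~\ref{beta} (and its analogue with the roles of $X$ and $X^+$ exchanged) the exotic cases with $\beta$ a half-integer are already pinned down when $K_F^2 = 8$ or $K_{F^+}^2 = 8$, so we may assume $K_F^2, K_{F^+}^2 \le 6$, and then by Proposition~\ref{alphabeta}(2)--(3) either $\beta = \beta^+ = -1$ with $\alpha = \alpha^+ \in \bN$, or $(\beta,\beta^+) = (-2,-\tfrac12)$. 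The latter subcase is handled by the same half-integer argument as in Proposition~\ref{beta}; one shows it forces $K_F^2 = 8$, contradicting our reduction. So I would reduce to the symmetric situation $\beta = \beta^+ = -1$, $\alpha = \alpha^+ =: a \in \bN$, $\tilde F^+ = a(-K_X) - F$ and $\tilde F = a(-K_{X^+}) - F^+$.

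Next I would extract the basic numerical identity. From $K_X\cdot(\tilde F^+)^2 = 0$ and Proposition~\ref{alphabeta}(5), expanding $\tilde F^+ = a(-K_X) - F$ and using $K_X\cdot F^2 = 0$, $K_X^2\cdot F = -K_F^2$ (adjunction on the fiber), one gets exactly Proposition~\ref{kx2kf}: $a\,({-}K_X)^3 = 2K_F^2$, and symmetrically $a\,({-}K_X)^3 = 2K_{F^+}^2$, so $K_F^2 = K_{F^+}^2$. Combined with $({-}K_X)^3 \le 22$ and $K_F^2 \in \{1,\dots,6\}$, and with $({-}K_X)^3$ even (it equals $4g-4$), the equation $a\,({-}K_X)^3 = 2K_F^2$ already restricts $(({-}K_X)^3, K_F^2, a)$ to a short finite list; the value of $\lambda$ is then controlled because $\lambda$ is the largest integer with $H^0(-K_X - \lambda F) \ne 0$ and the decomposition $\tilde F^+ = a(-K_X) - F$ forces $H^0(a(-K_X) - F) \ne 0$, whence crude comparisons of $h^0$ along the fibration $\phi$ bound $\lambda$ in terms of $a$ and $K_F^2$. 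Feeding in the structure of $\phi_*(-K_X)$ (a rank-$(K_F^2+1)$ bundle on $\bP_1$ whose splitting type is constrained by $\lambda$), together with $h^0(-K_X) = \tfrac12({-}K_X)^3 + 3$ and Riemann--Roch/Kodaira vanishing for the relevant sub-line-bundles (as in the $\lambda = 0$, $\beta = -\tfrac12$ proposition and in Lemma~\ref{div8}), should eliminate the numerically-possible-but-geometrically-impossible tuples and leave precisely the two lists in (1) and (2).

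For the existence statements I would exhibit explicit models exactly as in Proposition~\ref{beta}: realize $X$ (or $X^+$) inside a projectivized bundle $\PN(\sE)$ over $\bP_1$ with $\sE = \phi_*(-K_X)$ of the predicted splitting type, as a complete intersection of general sections in an appropriate twist $|k\zeta + m F|$, check smoothness and that $-K_X$ is big and nef with the unique trivial (or negative) summand producing the flopping curve $C_0$, and verify by adjunction that the general fiber is a del Pezzo surface of the stated degree. For $(({-}K_X)^3, K_F^2) = (2,1), (4,2), (6,3)$ in case (1) and $(2,3), (2,4), (4,4)$ in case (2) these are the standard hyperelliptic/complete-intersection models already appearing in Propositions~\ref{hyp}, \ref{ci} and Corollary~\ref{hypsymm}, so I would simply point to those, noting the del Pezzo fibration structure on the small resolution.

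The main obstacle is the elimination step: ruling out tuples like $(10,5)$, $(12,6)$ in (1) and $(4,6)$, $(6,6)$ in (2) is genuinely delicate, because the naive Riemann--Roch inequalities do not immediately contradict them — this is why the proposition only asserts that the remaining cases' existence is \emph{open} rather than proving non-existence. So the real content is getting the necessary conditions tight enough to cut the list down to what is stated, while honestly flagging which entries remain unresolved; I would expect to spend most of the effort on the bookkeeping of $\phi_*(-K_X)$, $\phi_*(\tilde F^+)$ and the compatibility of the two fibration structures under the flop, rather than on any single slick inequality.
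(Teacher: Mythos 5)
Your outline follows the same skeleton as the paper's proof: reduce via Propositions \ref{beta} and \ref{alphabeta} to $\beta=\beta^+=-1$, $\alpha=\alpha^+\in\bN$, exploit $\alpha({-}K_X)^3=2K_F^2$ from Proposition \ref{kx2kf}, and constrain the splitting type of $\sE=\phi_*({-}K_X)$ by Riemann--Roch. But two steps are genuinely missing. First, the dichotomy $\lambda\in\{0,1\}$ (with $\lambda=1$ forcing $\alpha=1$) cannot be obtained by ``crude comparisons of $h^0$ along the fibration.'' For $\alpha=1$ the bundle $\sE$ has rank $K_F^2+1$, degree $2$, $h^0(\sE)=K_F^2+3$ and $h^0(\sE(-1))=h^0(\tilde F^+)=2$; the splitting $\sE=\sO(2)\oplus\sO^{\oplus K_F^2}$ satisfies all of these and has $\lambda=2$, so no cohomological bookkeeping rules it out. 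The paper's actual mechanism is geometric: every member of the pencil $|\tilde F^+|$ is the strict transform of a fiber of $\phi^+$, hence irreducible and reduced, while $\lambda\ge 2$ (or $\lambda=1$ with $\alpha\ge 2$) would write $\tilde F^+=(\alpha-1)F+\alpha D$ with $D\in|{-}K_X-F|$ effective, producing reducible members. This irreducibility input is indispensable and absent from your proposal.

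Second, the reduction ``we may assume $K_F^2,K_{F^+}^2\le 6$'' is unjustified. Proposition \ref{beta} only disposes of the half-integer case $\beta=-\tfrac12$; it leaves $K_F^2=8$ with $\beta=-1$ entirely open, which feeds the tuples $(({-}K_X)^3,K_F^2)=(16,8)$ (for $\lambda=1$) and $(8,8)$ (for $\lambda=0$) into your numerics, and neither is excluded by Lemma \ref{div8} since both anticanonical degrees are divisible by $8$. The paper kills $(16,8)$ by degenerating lines from the Namikawa smoothing of $X'$ and contradicting the $2$-divisibility of $-K_X$ on fibers, and kills $(8,8)$ by a splitting-type contradiction for the rank-$4$ bundle $\sF$ with $X\subset\PN(\sF)$ from Mori's description of quadric bundles; without these arguments your final list would wrongly contain both tuples. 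Your reading of the ``open'' cases is correct (they are not eliminated, only their existence is unresolved), and your existence sketch is in the right spirit, though only $(2,1)$ and $(4,2)$ literally come from the earlier propositions --- $(6,3)$, $(2,3)$, $(2,4)$ and $(4,4)$ each require a fresh complete-intersection model in an explicit $\PN(\sE)$ together with a verification that $\psi$ is small. (Incidentally, the paper's own Case I analysis constructs $(8,4)$, which appears in the appendix table but is missing from the statement; your numerics would recover it.)
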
 

\proof By Prop. \ref{beta} we may assume $\beta = \beta^+ = -1$ and
$\alpha = \alpha^+ \in \bN$ in (\ref{alphabeta}). So 
$$ \tilde F^+ = \alpha(-K_X) - F.$$
Since all members of $\vert \tilde F^+ \vert $ are irreducible, we must have $\lambda \leq 1$ and if $\lambda = 1,$ then $\alpha = 1.$ 
\vskip .2cm \noindent
{\bf Case I:} $\lambda = 1.$ \\
Then $ -K_X = F + \tilde F^+$ and we obtain
$$ \sE = \phi_*(-K_X) = \sO(1)^2 \oplus \sO^b \oplus \sO(-1)^c $$
with $2+b+c = K_F^2+1$ and $4+b = h^0(-K_X).$ Since $(-K_X)^3 = 2K_F^2 $ by (\ref{kx2kf}), it follows $c = 0$ so that
$$ H^1(-K_X-F) = H^1(\tilde F^+) = 0.$$ 
Thus 
$$((-K_X)^3,K_F^2) = (2,1),(4,2),(6,3),(8,4),(10,5),(12,6),(16,8).$$ 
We will continue case by case.

\vspace{0.3cm}

\noindent {\bf 1.) $({-}K_X)^3 = 2$, $K_F^2 = 1$.} Here $|{-}K_X|$ has base points; this case exists and was already treated in Proposition~\ref{basepoints}.

\vspace{0.3cm}

\noindent {\bf 2.) $({-}K_X)^3 = 4$, $K_F^2 = 2$.} This case exists, here $X \to \PN(\sE)$ with $\sE = \sO \oplus \sO(1)^2$ is a double covering, i.e. $X$ is hyperelliptic. The construction can be found in Lemma~\ref{hypsymm}.

\vspace{0.3cm}

\noindent {\bf 3.) $({-}K_X)^3 = 6$, $K_F^2 = 3$.} We construct this case as follows. Let $\sE = \sO^2 \oplus \sO(1)^2$ and define
 \[X \in |3\zeta|\]
general. Then $-K_X = \zeta|_X$, hence $X$ is a smooth almost Fano threefold with $({-}K_X)^3 = 6$. The general fiber of the induced map $X \to \PN_1$ is a cubic in $\PN_3$, hence a del Pezzo surface of degree $3$. The map 
 \[\nu: \PN(\sE) \lra \PN_5\]
given by $|\zeta|$ contracts the surface $S \simeq \PN_1 \times \PN_1$ corresponding to the two trivial summands of $\sE$ along a ruling to a line $C \subset \PN_5$. Since $X \in |\nu^*\sO_{\PN_5}(3)|$ by construction, $X \cap S$ consists of $3$ fibers of $\nu$, i.e. the anticanonical map $\psi: X \to X'$ contracts three smooth rational curves to points. 

Note that the linear system $|{-}K_X-F|$ is a pencil with base locus the exceptional locus of $\psi$. This shows $X^+$ again admits a del Pezzo fibration. 

\vspace{0.3cm}

\noindent {\bf 4.) $({-}K_X)^3 = 8$, $K_F^2 = 4$.} The construction is analogously to the last case: define $X \subset \PN(\sE)$ with $\sE = \sO^3 \oplus \sO(1)^2$ as a complete intersection of two general elements in $|2\zeta|$. Then $X$ admits a del Pezzo fibration with $K_F^2 = 4$ and the anticanonical map contracts $4$ smooth rational curves. The flop is of the same type as above.

\vspace{0.3cm}

\noindent {\bf 5.) $({-}K_X)^3 = 10$, $K_F^2 = 5$.} Open.

\vspace{0.3cm}

\noindent {\bf 6.) $({-}K_X)^3 = 12$, $K_F^2 = 6$.} Open.

\vspace{0.3cm}

\noindent {\bf 7.) $({-}K_X)^3 = 16$, $K_F^2 = 8$.} This case does not exist for the following reason. By Proposition~\ref{antmodel}, $X'$ admits a smoothing $\X$ such that $\X_t$ is a smooth Fano threefold of index $1$ with $({-}K_X)^3 = 16$. Then $\X_t$ contains lines $l_t$ and the degeneration of $l_t$ to $X$ gives a line $l_0$ in $X$. Let $\hat{l_0} \subset X$ be the strict transform of $l_0$. Then 
 \[-K_X \cdot \hat{l_0} = 1.\] 
On the other hand, $-K_X = F + \tilde{F}^+$ by assumption. Since $-K_X|_F = K_F$ is divisible by two, $\hat{l}_0$ cannot be contained in a fiber. This shows $F\cdot\hat{l}_0 > 0$. Then $\tilde{F}^+\cdot \hat{l}_0 \le 0$. But $\hat{l}_0$ is not $\psi$-exceptional, hence $\tilde{F}^+\cdot \hat{l}_0 = 0$. Then the strict transform of $\hat{l}_0$ in $X^+$ is contained in the fiber $F^+$, which is impossible as above.

\vskip .2cm \noindent 
{\bf Case II:} $\lambda = 0.$ \\ 
So $\alpha (-K_X) = F + \tilde F^+ $ with $\alpha \geq 2.$  
Here $\sE = \phi_*(-K_X)$ has the form  
$$ \sE = \sO^a \oplus \sO(-1)^b.$$  
Since $a+b = K_F^2 + 1$ and  
$ a = h^0(-K_X) = {{(-K_X)^3} \over {2}} + 3,$ we obtain   
$$ (-K_X)^3 = 2a - 6.$$
Thus $a \geq 4.$ Now $\alpha (-K_X)^3 = 2K_F^2$ hence only the following cases remain.  
\begin{enumerate}
\item $a = 4, (-K_X)^3 = 2$ and $(K_F^2,\alpha)  = (3,3),(4,4),(5,5),(6,6),(8,8);$ 
\item $a = 5, (-K_X)^3 = 4 $ and $(K_F^2,\alpha) = (4,2), (6,3),(8,4);$
\item $a = 6, (-K_X)^3 = 6$ and $(K_F^2,\alpha) = (6,2);$ 
\item $a = 7, (-K_X)^3 = 8$ and $(K_F^2,\alpha) = (8,2).$ 
\end{enumerate} 
We will continue case by case.

\vspace{0.3cm}

\noindent {\bf 1.) $a=4$, $K_F^2=3$.} We have $a=4$ and $b=0$, hence take 
 \[X \subset \PN(\sO^{\oplus 4}) \simeq \PN_3\times \PN_1\]
a general element in $|3\zeta + 2F| = |(3,2)|$. Then $X$ is a smooth almost Fano threefold with the expected numerical data by adjunction formula. The anticanonical map is the restriction of the second projection 
 \[\pi: \PN(\sO^{\oplus 4}) \lra \PN_3\] 
defined by $|\zeta|$. Since $X\cdot l_{\pi} = 2$ for a general fiber of $\pi$, the restriction of $\pi$ to $X$ factors, i.e., $X$ is hyperelliptic with
 \[X \stackrel{\psi}{\lra} X' \stackrel{\mu}{\lra} \PN_3,\]
$\mu$ a double covering. It remains to show that $\psi$ is small. Let $x_0, \dots, x_3, y_0, y_1$ be homogeneous coordinates of the product $\PN_3\times \PN_1$. Then $X$ is defined by some $f(x_0, \dots, x_3, y_0, y_1)$, homogeneous of degree $3$ is the $x_i$ and of degree $2$ in the $y_i$. Write
 \[f=g_0y_0^2+g_1y_0y_1+g_2y_1^2, \quad g_i \in \KC[x_0, \dots, x_3]\]
homogeneous of degree $3$. Since $X$ was taken general, the $g_i$ are general. Then the fiber over some point $a=[a_0:\dots:a_3] \in \PN_3$ is contained in $X$, iff $g_0(a)=g_1(a)=g_2(a)=0$. This shows $X$ contains fibers of $\pi$ exactly over the complete intersection $g_0=g_1=g_2=0$ in $\PN_3$, which are $27$ points. Hence $\psi$ is small with $27$ exceptional curves. Since $X$ is hyperelliptic, the flop $X^+$ is of the same type as $X$. 

\vspace{0.3cm}

\noindent {\bf 2.) $a=4$, $K_F^2=4$.} We have $a=4$ and $b=1$, hence take 
 \[X \subset \PN(\sO^{\oplus 4} \oplus \sO(-1))\]
a complete intersection of two general elements
 \[Q_1 \in |2\zeta+F|, \quad Q_2 \in |2\zeta+2F|.\]
The base locus $C_0 = \Bs|\zeta|$ has $C_0\cdot Q_2 = 0$. But the system $|2\zeta+2F$ is base point free, hence the general element $Q_2$ does not contain $C_0$. This shows $X$ is a smooth almost Fano threefold with the expected numerical data. It remains to show $\psi: X \to X'$ is small. This can be done either directly in coordinates as in the last case, or just by checking the respective lists for the divisorial case in \cite{JPR}.

\vspace{0.3cm}

\noindent {\bf 3.) $a=4$, $K_F^2=5$.} Open.

\vspace{0.3cm}

\noindent {\bf 4.) $a=4$, $K_F^2=6$.} Open.

\vspace{0.3cm}

\noindent {\bf 5.) $a=4$, $K_F^2=8$.} Then $({-}K_X)^3 = 2$, which is impossible due to Lemma~\ref{div8}.

\vspace{0.3cm}

\noindent {\bf 6.) $a=5$, $K_F^2=4$.} Here $b=0$, hence 
 \[X \subset \PN(\sO^{\oplus 5}) \simeq \PN_4 \times \PN_1\]
has codimension $2$. Since $({-}K_X)^3 = 4$, either $X' \subset \PN_4$ is a quartic, or a double covering of a quadric, i.e. hyperelliptic. Both cases do exist.

(a) Take for $X$ the complete intersection of two general elements
 \[Q_0, Q_1 \in |2\zeta+F| = |(2,1)|.\]
Then $X$ is smooth almost Fano. Let $x_0, \dots, x_4, y_0, y_1$ be homogeneous coordinates of $\PN_4\times \PN_1$. Then $Q_i$ is defined by
 \[y_0q_{i0} + y_1q_{i1} = 0, \quad q_{ij} \in \KC[x_0, \dots, x_4]\]
general quadrics. The image of $\psi: X \to \PN_4$ is given by the determinant of
 \[Q = \left(\begin{array}{ll} q_{00} & q_{01}\\q_{10} & q_{11}\end{array}\right),\]
hence a quartic in $\PN_4$. We have exactly one point in $X$ over any $p \in \PN_4$ with $\rk Q(p) = 1$, and a whole $\PN_1$ over all $p$ with $\rk Q(p) = 0$. But $\rk Q(p) = 0$ means $q_{ij}(p) = 0$ for all $i,j$, hence $X \to X'$ has exceptional fibers over the intersection of the $4$ general quadrics $q_{ij}$ in $\PN_4$ cutting out $16$ points. This shows $\psi$ is small with $16$ exceptional fibers.

Concerning the flop we consider the linear system $|{-}2K_X-F| = |(2,-1)|_X|$ with base locus exactly $\exc(\psi)$. Chasing successively the twisted ideal sequences of $X \subset Q_0 \subset \PN_4 \times \PN_1$ we find $h^0(X, (2,-1)|_X) = 2$, i.e. the flop $X^+$ again admits a del Pezzo fibration and is in fact of the same type as $X$.

\vspace{0.3cm}

(b) Take for $X$ the complete intersection of
 \[Q_0 \in |2\zeta| = |(2,0)|, \quad Q_1 \in |2\zeta+2F| = |(2,2)|.\]
As above, the $Q_i$ are given by
 \[q_0 = 0, \quad y_0^2q_1 + y_0y_1q_2+y_1^2q_3 = 0,\]
respectively. Now the image of $X$ in $\PN_4$ is the quadric $Q_0$, the general fiber consists of two points, and we have again $16$ exceptional fibers. Since $X$ is hyperelliptic, the flop $X^+$ is of the same type as $X$. 

\vspace{0.3cm}

\noindent {\bf 7.) $a=5$, $K_F^2=6$.} Open.

\vspace{0.3cm}

\noindent {\bf 8.) $a=5$, $K_F^2=8$.} Here $({-}K_X)^3 = 4$, contradicting Lemma~\ref{div8}.

\vspace{0.3cm}

\noindent {\bf 9.) $a=6$, $K_F^2=6$.} Open.

\vspace{0.3cm}

\noindent {\bf 10.) $a=7$, $K_F^2=8$.} This case does not exist by the following argument. By \cite{Mori}, $X \subset \PN(\sF)$ for some rank $4$ vector bundle $\sF$ on $\PN_1$ and
 \[X \in |2\zeta + \pi^*\sO(\mu)|\]
for some integer $\mu$. By adjunction formula, we have 
 \[-K_X = 2\zeta + (2-c_1-\mu)F\]
with $c_1 = c_1(\sF)$. Assume $\sF$ is normalized such that $-3 \le c_1 \le 0$. Then 
 \[8 = ({-}K_X)^3 = -8c_1-16\mu+48,\]
hence $-K_X = 2\zeta + (-\frac{c_1}{2} - \frac{1}{2})F$. Since $-K_X$ is a line bundle and not divisible in $\Pic(X)$ by assumption, we must have $c_1 = -3$ and $\mu = 4$.

Now $\alpha = 2$ gives $\tilde{F}^+ = -2K_X - F$, i.e.
 \[\tilde{F}^+ = 4\zeta|_X + F.\]
Hence $h^0(X, \zeta|_X) = 0$. The twisted ideal sequence of $X$ shows $\sF = \pi_*\zeta = \pi_*(\zeta|_X)$, i.e.
 \[H^0(\PN_1, \sF) = 0.\]
Assume $\sE = \oplus_{i=1}^4 \sO(a_i)$. Then $\sum a_i = -3$ and $a_i < 0$ for all $i$. This is impossible.
\qed

\begin{proposition} \label{delpezzoconicnum}
Assume $\dim Y^+ = 2$ and let $\tau^+$ be the degree of the discriminant locus of $\phi^+.$ 
Then, using the notations of (\ref{notations}),  
\begin{enumerate} 
\item Either $(\beta, \beta^+) = (-1,-1) $ or  $(\beta, \beta^+) = (-{{1} \over {2}},-2).$ 
\item $K_X \cdot (\tilde L^+)^2 = -2.$
\item $\alpha^2(-K_X)^3 = 2\alpha(12-\tau^+) - 2$.
\item $\alpha^2(-K_X)^3 = 2\alpha K_F^2  + 2$ if $\beta = -1;$ otherwise \\
$\alpha^2(-K_X)^3 = \alpha K_F^2 + 2.$
\end{enumerate}
\end{proposition}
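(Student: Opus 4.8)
The plan is to evaluate two intersection numbers on $X^+$ --- one on a general fibre of the conic bundle $\phi^+$, the other on the preimage of a general line in $Y^+=\bP_2$ --- and to carry them over to $X$ through the flop $\chi\colon X\dashrightarrow X^+$. Write $H=\sO_{\bP_2}(1)$, so that $L^+=(\phi^+)^*H$ and $\tilde L^+=\alpha(-K_X)+\beta F$ on $X$ as in \ref{notations}. Two transfer principles are used. Since $\chi$ is an isomorphism away from the finitely many flopping curves, a general fibre $F$ of $\phi$ is carried isomorphically onto $\tilde F$ with $\tilde L^+|_F\cong L^+|_{\tilde F}$, so $(\tilde L^+)^2\cdot F=(\tilde L^+|_F)^2=(L^+|_{\tilde F})^2=(L^+)^2\cdot\tilde F$. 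Moreover, on a common resolution $p\colon W\to X$, $q\colon W\to X^+$ one has $p^*(-K_X)=q^*(-K_{X^+})$ while $q^*L^+-p^*\tilde L^+$ is supported over the flopping curves, along which $p^*(-K_X)=(\psi\circ p)^*(-K_{X'})$ restricts to zero; hence $K_X\cdot(\tilde L^+)^2=K_{X^+}\cdot(L^+)^2$ and $(-K_X)^2\cdot\tilde L^+=(-K_{X^+})^2\cdot L^+$ (this is the mechanism behind \ref{alphabeta}(5)).

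\emph{The two numbers on $X^+$.} A general fibre $C^+$ of $\phi^+$ is a smooth conic with $-K_{X^+}\cdot C^+=2$, and $[C^+]=(\phi^+)^*(H^2)$; thus $K_{X^+}\cdot(L^+)^2=K_{X^+}\cdot C^+=-2$, which is \textbf{(2)}. Next let $\ell\subset\bP_2$ be a general line and $S^+=(\phi^+)^{-1}(\ell)$; for general $\ell$ this is a smooth conic-bundle surface over $\ell\simeq\bP_1$ with exactly $\tau^+$ degenerate fibres, so $S^+$ is a Hirzebruch surface blown up at $\tau^+$ points and $K_{S^+}^2=8-\tau^+$. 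Adjunction gives $-K_{X^+}|_{S^+}=-K_{S^+}+C_0$ with $C_0$ the class of a fibre of $S^+\to\bP_1$ (as $\ell^2=1$); using $C_0^2=0$ and $K_{S^+}\cdot C_0=-2$ we get $(-K_{X^+})^2\cdot L^+=(-K_{X^+}|_{S^+})^2=K_{S^+}^2+4=12-\tau^+$, hence $(-K_X)^2\cdot\tilde L^+=12-\tau^+$.

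\emph{Conclusion.} Expanding on $X$ with $\tilde L^+=\alpha(-K_X)+\beta F$, using $F|_F=0$ and the adjunction identity $(-K_X)^2\cdot F=K_X^2\cdot F=K_F^2$, the two computations above read $\alpha^2(-K_X)^3+2\alpha\beta K_F^2=2$ and $\alpha(-K_X)^3+\beta K_F^2=12-\tau^+$. Multiplying the second by $2\alpha$ and subtracting the first yields $\alpha^2(-K_X)^3=2\alpha(12-\tau^+)-2$, which is \textbf{(3)}; and \textbf{(4)} follows by putting $\beta=-1$, resp. $\beta=-\tfrac12$, into $\alpha^2(-K_X)^3=2-2\alpha\beta K_F^2$. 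For \textbf{(1)}: by \ref{alphabeta} (recall $K_F^2\ne7$ and $F\ne\bP_2$) we have $\beta\beta^+=1$ and $(\beta,\beta^+)\in\{(-1,-1),(-2,-\tfrac12),(-\tfrac12,-2)\}$, the last only when $K_F^2=8$. To exclude $(-2,-\tfrac12)$, note $\tilde F=\alpha^+(-K_{X^+})+\beta^+L^+$ with $\alpha^+=-\alpha/\beta$ by \ref{alphabeta}(1), so $\alpha=2\alpha^+$; since $(L^+|_{\tilde F})^2=(\tilde L^+)^2\cdot F=\alpha^2K_F^2>0$, the nef class $L^+|_{\tilde F}$ is big, so $\phi^+|_{\tilde F}\colon\tilde F\to\bP_2$ is generically finite, of degree $\alpha^2K_F^2$ and also of degree $\tilde F\cdot C^+=2\alpha^+=\alpha$. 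Thus $\alpha K_F^2=1$ with $\alpha=\tilde F\cdot C^+\in\bZ_{\ge1}$, forcing $\alpha=K_F^2=1$; but then $-K_F=-K_X|_F$ is not globally generated, contradicting the standing hypothesis that $-K_X$ is spanned. Hence $(\beta,\beta^+)=(-1,-1)$ or $(-\tfrac12,-2)$.

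The only delicate point is \textbf{(1)}: one must be sure that $\phi^+|_{\tilde F}$ is genuinely generically finite with $\deg(\phi^+|_{\tilde F})=\tilde F\cdot C^+$ --- this rests on $\alpha\ne0$ (clear, as $\tilde L^+$ is effective and not a multiple of $F$) to get bigness of $L^+|_{\tilde F}$, and on transversality of $\tilde F$ with a general fibre $C^+$. Everything else is routine bookkeeping with the two intersection numbers attached to $\phi^+$.
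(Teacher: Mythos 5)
Parts (2)--(4) of your argument are correct and essentially the paper's: (2) is \ref{alphabeta}(5) applied to the class $(L^+)^2$ of a general conic, and (3), (4) follow by expanding $K_X\cdot(\tilde L^+)^2=-2$ and $K_X^2\cdot\tilde L^+=12-\tau^+$ in the basis $-K_X,F$; your derivation of $({-}K_{X^+})^2\cdot L^+=12-\tau^+$ from the surface $(\phi^+)^{-1}(\ell)$ is a correct proof of the standard formula the paper simply quotes. The problem is part (1).

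The identity $(L^+|_{\tilde F})^2=(\tilde L^+)^2\cdot F$ on which your exclusion of $(\beta,\beta^+)=(-2,-\tfrac12)$ rests is false, because a general fibre $F$ is \emph{not} carried isomorphically onto $\tilde F$. Since $F$ is nef, not numerically trivial, and zero on the $\phi$-ray, we have $F\cdot l_{\psi}>0$ for every flopping curve, so \emph{every} fibre meets $\exc(\psi)$; dually $\tilde F\cdot l_{\psi^+}=\beta^+L^+\cdot l_{\psi^+}<0$, so $\tilde F$ contains all flopping curves of $X^+$. Thus $\psi|_F\colon F\to F'$ is finite and birational while $\psi^+|_{\tilde F}\colon\tilde F\to F'$ contracts curves, and triple products of divisors are not preserved by the flop unless one factor is a multiple of $K$. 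Concretely, $(\tilde L^+)^2\cdot F=\alpha^2K_F^2$ on $X$, whereas $(L^+)^2\cdot\tilde F=2\alpha^+$ on $X^+$; in the existing case $(({-}K_X)^3,K_F^2,\tau^+)=(8,3,7)$ of Proposition~\ref{delpezzoconic} these numbers are $3$ and $2$. (If your identity held it would also give $\alpha K_F^2=2$ whenever $\beta=-1$, contradicting that very example.) Once the identity is removed, both of your ``degrees'' of $\phi^+|_{\tilde F}$ are equal to $\tilde F\cdot C^+$ by definition, and the argument yields nothing. The correct route, which is the paper's, is: $\phi^+$ cannot be a $\PN_1$-bundle (Proposition~\ref{p1} applied to $X^+$ would force index $2$), so it is a proper conic bundle and has singular fibres; a component $l^+$ of such a fibre satisfies $-K_{X^+}\cdot l^+=1$ and $L^+\cdot l^+=0$, whence $\tilde F\cdot l^+=\alpha^+\in\bZ$, so $\beta^+L^+=\tilde F-\alpha^+(-K_{X^+})$ is Cartier, forcing $\beta^+\in\bZ$ (otherwise $L^+$ would be divisible by $2$ in $\Pic(X^+)$) and hence $\beta^+\in\{-1,-2\}$, which is exactly statement (1).
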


\proof (1) follows from (\ref{alphabeta}): the case $\phi^+$ is a
$\PN_1$-bundle is treated in section~\ref{secconic}, we may hence
assume $\phi^+$ is a proper conic bundle. i.e. there are singular fibers and therefore there exists a curve
$l^+$ with $-K_{X^+} \cdot l^+ = 1.$ 
 \\
(2) This is (\ref{alphabeta}), (5). 
\\
(3) is a consequence of (1) and (3.7), having in mind that $(-K_X)^3 = (-K_{X^+})^3.$ \\
(4) is a consequence of (1)  and (3).
\qed

\begin{proposition} \label{delpezzoconic}
If $\dim Y^+ = 2,$ then $\beta = -1$ and $(({-}K_X)^3, K_F^2, \tau^+)$ is one of $(8,3,7)$, $(10,4,6)$, $(12,5,5)$, $(14,6,4)$, where the first two cases really exist.
\end{proposition}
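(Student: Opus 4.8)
The plan is to derive from Proposition~\ref{delpezzoconicnum} a short numerical list, trim it by a few geometric exclusions, and then realize the first two survivors by explicit constructions over $\bP_1$. Since $\rho(X^+)=2$ and $\phi^+$ contracts an extremal ray to the surface $Y^+$, we have $\rho(Y^+)=1$, so $Y^+\simeq\bP_2$ and $\phi^+$ is a conic bundle; we may assume it is a proper conic bundle, the $\bP_1$-bundle case being treated in Section~\ref{secconic}. A singular fibre of $\phi^+$ then contains a rational curve $l^+$ with $-K_{X^+}\cdot l^+=1$ and $L^+\cdot l^+=0$, so Proposition~\ref{delpezzoconicnum} applies; in particular $(\beta,\beta^+)$ is $(-1,-1)$ or $(-\frac12,-2)$.

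First I would show $\beta=-1$, $\alpha=1$ and $\lambda=1$. If $K_F^2\le 6$ then $F$ contains a $(-1)$-curve $l_\phi$, and intersecting $\tilde L^+=\alpha(-K_X)+\beta F$ with $l_\phi$ gives $\alpha=\tilde L^+\cdot l_\phi\in\bZ$; if moreover $\beta=-\frac12$, then $\frac12F=\alpha(-K_X)-\tilde L^+\in\Pic(X)$, so $F$ is $2$-divisible. This is impossible: a class $F_0$ with $2F_0=F$ is $\phi$-numerically trivial, hence (as $\rho(X/\bP_1)=1$) a pull-back $\phi^*G$, forcing $\sO_{\bP_1}(1)=2G$. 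Thus for $K_F^2\le 6$ we get $\beta=\beta^+=-1$ and $\alpha=\alpha^+\in\bN$ (Proposition~\ref{alphabeta}). Subtracting \ref{delpezzoconicnum}(4) from (3) then yields $\alpha(12-\tau^+-K_F^2)=2$, so $\alpha\in\{1,2\}$; for $\alpha=2$ relation (4) gives $4({-}K_X)^3=4K_F^2+2$, impossible, hence $\alpha=1$ and
\[({-}K_X)^3=2K_F^2+2,\qquad\tau^+=10-K_F^2.\]
Moreover $\lambda=1$, since $\tilde L^+=-K_X-F$ is effective with $h^0=h^0(X^+,L^+)=3$ while $h^0(X,-K_X-2F)=h^0(X^+,K_{X^+}+2L^+)=0$, using $K_{X^+}+2L^+=-K_{X^+}-2\tilde F$ and $\phi^+_*\omega_{X^+}=0$ for a conic bundle.

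Next I would fix $K_F^2$. Since $K_F^2\ne 7$ (by \cite{Mori}) and $F\ne\bP_2$, one has $K_F^2\in\{1,2,3,4,5,6,8\}$. For $K_F^2=1$, $|{-}K_F|$ has a base point on every fibre, so $-K_X$ is not spanned, contrary to the standing assumption. For $K_F^2=8$, Lemma~\ref{div8} forces $8\mid({-}K_X)^3\le 22$, so $({-}K_X)^3\in\{8,16\}$, and feeding these into \ref{delpezzoconicnum}(3) and (4) leaves no admissible $\alpha$ for either value of $\beta$, so this case drops out. For $K_F^2=2$ we get $({-}K_X)^3=6$; using $\lambda=1$ and counting sections one finds $\phi_*(-K_X)=\sO_{\bP_1}(1)^{\oplus 3}$, so the anticanonical morphism of $X$ factors through the Segre embedding $\bP_2\times\bP_1\hookrightarrow\bP_5$, hence $W\cong\bP_2\times\bP_1$ has $\rho(W)=2$, contradicting $\rho(W)\le\rho(X')=1$. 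Therefore $K_F^2\in\{3,4,5,6\}$, and $(({-}K_X)^3,K_F^2,\tau^+)$ is one of the four triples in the statement.

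It remains to construct the cases $(8,3,7)$ and $(10,4,6)$. Counting sections gives $\phi_*(-K_X)=\sO_{\bP_1}(1)^{\oplus 3}\oplus\sO_{\bP_1}^{\oplus(K_F^2-2)}$, and as $-K_X$ is relatively very ample for $K_F^2\ge 3$, one is led to take $X\subset\PF(1,1,1,0)$ a general member of $|3\zeta-F|$ for $K_F^2=3$, and $X\subset\PF(1,1,1,0,0)$ a general complete intersection of members of $|2\zeta|$ and $|2\zeta-F|$ for $K_F^2=4$, with $-K_X=\zeta|_X$ in both cases. Bertini together with a local check along the section $C_0$ contracted by $|\zeta|$ shows that $X$ is a smooth almost Fano threefold with the prescribed $({-}K_X)^3$ and $K_F^2$ and that $\psi$ is small with exceptional locus $X\cap C_0$; one then chases the twisted ideal sequences of $X$ in the ambient bundle to identify the flop $X^+$ as a conic bundle over $\bP_2$ with discriminant of degree $7$, respectively $6$. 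I expect this last step --- pinning down the structure of the flop and the degree of its discriminant --- to be the main obstacle; everything before it is routine once Proposition~\ref{delpezzoconicnum} and Lemma~\ref{div8} are available.
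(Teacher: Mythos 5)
Your proposal is correct and follows essentially the same route as the paper: the identity $\alpha(12-\tau^+-K_F^2)=2$ extracted from Proposition~\ref{delpezzoconicnum}, the parity exclusion of $\alpha=2$, the resulting relations $({-}K_X)^3=2K_F^2+2$ and $\tau^+=10-K_F^2$, the splitting $\phi_*(-K_X)=\sO(1)^{\oplus3}\oplus\sO^{\oplus(K_F^2-2)}$, and the identical realizations of the two existing cases as $|3\zeta-F|\subset\PN(\sO(1)^{\oplus3}\oplus\sO)$ and as a complete intersection of $|2\zeta|$ and $|2\zeta-F|$ in $\PN(\sO(1)^{\oplus3}\oplus\sO^{\oplus2})$. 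The one place you genuinely diverge is the exclusion of $K_F^2=2$ (the paper's case $\tau^+=8$): you note that $\phi_*(-K_X)=\sO(1)^{\oplus3}$ forces the anticanonical image $W$ to be the Segre threefold $\PN_1\times\PN_2$, whose Picard number $2$ contradicts $\rho(W)\le\rho(X')=1$, whereas the paper works on the conic-bundle side and shows the spanned bundle $\sE^+=\phi^+_*(-K_{X^+})$ on $\PN_2$ would need $c_2=-1$, impossible for a quotient of a globally generated bundle. Both are sound; yours is more geometric and self-contained, the paper's fits into the uniform bookkeeping $c_1=9-\tau^+$, $2c_2=(\tau^+)^2-19\tau^++86$ that it applies to every value of $\tau^+$. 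Your handling of $K_F^2=8$ (Lemma~\ref{div8} plus the quadratic equations in $\alpha$ for both admissible values of $\beta$) is a harmless repackaging of the paper's two separate exclusions. Finally, the step you flag as the main obstacle --- identifying the flop as a conic bundle with $\tau^+=7$ resp.\ $6$ --- is shorter than you fear: the flopped curves have normal bundle of type $(-1,-1)$, so the flop is simple, and $|(\zeta-F)|_X|$ has $h^0=3$ with base locus exactly those curves, so after the flop it becomes a base point free net defining the conic bundle; the degree of the discriminant then follows from $K_{X^+}^2\cdot L^+=12-\tau^+$.
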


\proof (1) We first consider the case that $\beta = -1.$ Then
\ref{delpezzoconicnum}(3) and (4) give
$$ \alpha(12 - \tau^+ - K_F^2) = 2.$$
Hence either
$$ \alpha = 1; \  12 - \tau^+ - K_F^2 = 2,$$
or
$$ \alpha = 2; \ 12 - \tau^+ - K_F^2 = 1.$$
The second alternative however contradicts \ref{delpezzoconicnum}(4).
So $\alpha = 1$. 
Here $(-K_X)^3 = 2K_F^2 + 2$ and $-K_X = F + \tilde L^+$. So $h^0(-K_X - F) = 3$ and $\vert -K_X - F\vert $ does not
contain reducible members. This implies
$$ \sE = \phi_*(-K_X) = \sO(1)^3 \oplus \sO^b \oplus \sO(-1)^c $$
with $3+b+c = K_F^2 + 1$ and $h^0(-K_X) = 6+b.$ Hence $c = 0$ and 
$$ K_F^2 = b+2; \ (-K_X)^3 = 2b+6.$$ 
Next we consider the spanned rank 3 bundle
$$ \sE^+ = \phi^+_*(-K_{X^+}).$$ 
Using the notations of section~\ref{secconic}, $X^+ \subset \bP(\sE^+)$ is a divisor of the form
$$ [X] = 2\zeta + \pi^*(\sO(\lambda)), \quad \lambda = 3-c_1,$$
such that $\zeta \vert X^+ = -K_{X^+}.$ Let $c_i = c_i(\sE^+).$
Then
$$ 2b+6 = (-K_X)^3 = \zeta^3 \cdot X = c_1^2 - 2c_2 + 3c_1. \eqno (*)$$
The equation $K_{X^+}^2 \cdot L^+ = 12 - \tau^+ $ translates into 
$$ c_1 = 9 - \tau^+. \eqno(**) $$  
>From $(-K_X)^3 = 2b+6 $ and \ref{delpezzoconicnum}(3) we get 
$$b = 8-\tau^+,$$
in particular $2 \leq \tau^+ \leq 8$, $\tau^+\not= 3$. 
Putting this and (**) into (*) gives
$$ (\tau^+)^2 - 19 \tau^+ + 86 = 2c_2.$$ 
We consider a general section in $\sE^+$ and obtain a rank 2 vector bundle $\sF^+$ from the exact sequence
$$ 0  \to \sO \to \sE^+ \to \sF^+ \to 0.$$ 
Hence $h^0(\sF^+(-1)) = h^0(\sE^+(-1)) = 2.$ We continue case by case.

\vspace{0.3cm}

\noindent {\bf 1.) $\tau^+ = 8.$} Then $c_1(\sF^+) = 1$, $c_2(\sF^+) = -1$, which is impossible.

\vspace{0.3cm}

\noindent {\bf 2.) $\tau^+ = 7.$} Then $K_F^2 = 3$ and 
 \[X \subset \PN(\sE) = \PN(\sO(1)^{\oplus 3} \oplus \sO)\]
is a hypersurface. Take $X \in |3\zeta -F|$ general. Then $X$ is a smooth almost Fano threefold with $-K_X = \zeta|_X$. The map
 \[\PN(\sE) \stackrel{|\zeta|}{\lra} Z \subset \PN_6\]
is a small resolution of the cone over $\PN_1 \times \PN_2 \hookrightarrow \PN_5$ (Segre enbedding). The exceptional curve $C_0 \subset \PN(\sE)$ corresponds to the projection $\sE \to \sO$, the only trivial summand of $\sE$. Since
 \[X\cdot C_0 = (3\zeta-F)\cdot C_0 = -1,\]
$X$ contains $C_0$, hence $\psi$ is small with exactly one exceptional curve. 

Concerning the flop note that the normal bundle of $C_0$ in $X$ is of type $(-1,-1)$, i.e. $X \dasharrow X^+$ is a simple flop. The linear system
 \[|(\zeta-F)|_X|\]
has base locus exactly $C_0$ and we find $h^0(X, \zeta-F) = 3$. This shows $X^+$ admits a conic bundle structure and the remaining data are easily verified.

\vspace{0.3cm}

\noindent {\bf 3.) $\tau^+ = 6.$} Then $K_F^2 = 4$ and
 \[X \subset \PN(\sE) = \PN(\sO(1)^{\oplus 3} \oplus \sO^{\oplus 2}),\]
a small resolution of the double cone $Z \subset \PN_7$ over $\PN_1 \times \PN_2 \hookrightarrow \PN_5$ embedded by Segre, i.e.
\[\PN(\sE) \stackrel{|\zeta|}{\lra} Z \subset \PN_7.\]
Take $X$ a complete intersection of
 \[Q_1 \in |2\zeta|, \quad Q_2 \in |2\zeta-F|\]
general. Then $X$ is a smooth almost Fano threefold with $-K_X = \zeta|_X$. The first quadric $Q_1$ is the pullback of some general quadric in $\PN_7$ intersecting the vertex of the cone $Z$ in two points. This means
 \[Q_1 \lra Z_1 \subset \PN_7\]
is birational with exceptional locus two smooth rational curves $C_1, C_2 \subset Q_1$. The threefold $X$ is a divisor on $Q_1$, cut out by $Q_2$, and we find
 \[X\cdot(C_1+C_2) = Q_1\cdot Q_2 \cdot S = -2,\]
where $S \simeq \PN_1 \times \PN_1 = \PN(\sO^{\oplus 2})$ is the exceptional surface of $\PN(\sE) \to Z$. Since the two curves are numerically equivalent, both have negative intersetion number with $X$ in $Q_1$, are hence contained in $X$. This shows $\psi$ is small with two smooth exceptional curves.

Concerning the flop consider as above the linear system $|\zeta-F|$ on $X$, which has base locus $C_1 \cup C_2$ on $X$ and admits $3$ sections. The remaining data of $X^+$ now follow numerically.

\vspace{0.3cm}

\noindent {\bf 4.) $\tau^+ = 5.$} Open.

\vspace{0.3cm}

\noindent {\bf 5.) $\tau^+ = 4.$} Open. 

\vspace{0.3cm}

\noindent {\bf 6.) $\tau^+ = 3.$} Then $K_F^2 = 7$, which is impossible.

\vspace{0.3cm}

\noindent {\bf 7.) $\tau^+ = 2.$} Then $K_F^2=8$ and $({-}K_X)^3 = 18$, contradicting Lemma~\ref{div8}.

\vspace{0.3cm}

\noindent {\bf 8.) $\tau^+ = 1.$} Then $K_F^2 = 9$, which is ruled out by assumption.

\vskip .2cm \noindent   
(2) Now consider the case $\beta = -{{1} \over {2}}$ so that $\beta^+ = -2$ and $K_F^2 = 8.$ 
Arguing in the same way as in (1), we get
$$ \alpha(48 - 4 \tau^+ - K_F^2) = 6.$$ 
Together with \ref{delpezzoconicnum}(4) this yields a contradiction. 
\qed

\begin{setup} {\rm  
Assume that $\phi^+$ is birational. The exceptional divisor will be denoted $E^+$ and its strict transform in $X$ by 
$\tilde E^+.$ Slightly differing from (\ref{notations}), we will substitute $L^+$ by $E^+$ and shall write
\begin{equation} \label{setup1}
 \tilde F = \alpha^+ (-K_{X^+}) + \beta^+ E^+ 
\end{equation}
and 
\begin{equation} \label{setup2} 
\tilde E^+ = \alpha (-K_X) + \beta F. 
\end{equation}
All results of Proposition \ref{alphabeta} remain valid. Denote the generator of $\Pic(Y^+)$ by $H^+$, i.e., $L^+ = (\phi^+)^*H^+$. If $Y^+$ is smooth, then let $-K_{Y^+} = rH^+$, $1 \le r \le 4$ the index of $Y^+$.}
\end{setup}

\begin{proposition} \label{blowupsmooth}
Suppose  $E^+ = \bP_2$ with normal bundle $\sO(-1)$. Then $Y^+$ has index $1$ and $(-K_{Y^+})^3 = 18.$ 
Moreover $(-K_X)^3 = 10$ and $K_F^2 = 6.$ 
\end{proposition}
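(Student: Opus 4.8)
The plan is to combine the fact that $\phi^+$ is the blow-down of a smooth point with the numerical identities of Proposition~\ref{alphabeta}, reducing everything to a single Diophantine equation for $(-K_X)^3$.

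\emph{Step 1: geometry of $\phi^+$.} Since $E^+ \simeq \bP_2$ carries normal bundle $\sO(-1)$, by \cite{Mori} the contraction $\phi^+$ is the inverse of the blow-up of a smooth point $p$ on a smooth threefold $Y^+$; hence $\rho(Y^+) = \rho(X^+)-1 = 1$. I would check $-K_{Y^+}$ is big and nef: it is big because $(-K_{Y^+})^3 = (-K_{X^+})^3 + 8 > 0$, and nef because for a curve $C\subset Y^+$ one tests $(\phi^+)^*(-K_{Y^+}) = -K_{X^+} + 2E^+$ against the strict transform of $C$, the sign being governed by whether $p\in C$. With $\rho(Y^+)=1$ a big and nef anticanonical class is automatically ample, so $Y^+$ is a smooth Fano threefold of Picard number one. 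From $K_{X^+} = (\phi^+)^*K_{Y^+} + 2E^+$ and $\sO_{E^+}(E^+) = \sO_{\bP_2}(-1)$ one gets $K_{X^+}|_{E^+} = \sO_{\bP_2}(-2)$, hence
$$(-K_{Y^+})^3 = (-K_{X^+})^3 + 8, \qquad K_{X^+}^2\cdot E^+ = 4, \qquad K_{X^+}\cdot (E^+)^2 = 2;$$
and $(-K_X)^3 = (-K_{X^+})^3$ since $X^+$ is a flop of $X$.

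\emph{Step 2: the key equation.} I would then work on $X$ in the basis $\{-K_X, F\}$ of $\Pic(X)\otimes\bQ$. By adjunction on a general fibre (whose normal bundle is trivial) $(-K_X)|_F = -K_F$, so $(-K_X)^2\cdot F = K_F^2$ and $(-K_X)\cdot F^2 = F^3 = 0$. Writing $d = (-K_X)^3$ and $\tilde E^+ = \alpha(-K_X)+\beta F$ as in \eqref{setup2}, Proposition~\ref{alphabeta}(5) applied with $D = \tilde E^+$ (whose strict transform back in $X^+$ is $E^+$) turns the two blow-up numbers above into
$$\alpha d + \beta K_F^2 = 4, \qquad \alpha^2 d + 2\alpha\beta K_F^2 = -2.$$
Eliminating $\beta K_F^2$ leaves the single equation $\alpha^2 d = 8\alpha + 2$. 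Restricting $\tilde E^+$ to a general fibre shows $\tilde E^+|_F = -\alpha K_F$ is effective, so $\alpha > 0$, and therefore $\alpha = (4+\sqrt{16+2d})/d$.

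\emph{Step 3: arithmetic and conclusion.} Since $\alpha$ is rational, $16+2d$ is a perfect square; writing $16+2d = (2n)^2$ gives $d = 2n^2-8$, and $0 < d = (-K_X)^3 \le 22$ (Proposition~\ref{antmodel} together with Iskovskikh's classification) forces $n = 3$, i.e.\ $(-K_X)^3 = 10$, $\alpha = 1$, and $(-K_{Y^+})^3 = 18$. As $\alpha = 1\in\bZ$ and $F$ is primitive in $\Pic(X)$ (it is the fibre class of $\phi$), $\beta F = \tilde E^+ - (-K_X)$ forces $\beta\in\bZ$; with $\beta\beta^+ = 1$ from Proposition~\ref{alphabeta}(1) this gives $\beta = \pm1$, and $\beta K_F^2 = 4-d = -6 < 0$ leaves only $\beta = -1$, $K_F^2 = 6$. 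Finally, a smooth Fano threefold of Picard number one and index $r\ge 2$ has $(-K)^3 \in \{8,16,24,32,40\}$ for $r=2$, $54$ for $r=3$, $64$ for $r=4$ — none equal to $18$ — so $Y^+$ has index $1$. The only step needing genuine care is keeping signs straight in the blow-up intersection numbers $K_{X^+}^2\cdot E^+$, $K_{X^+}\cdot(E^+)^2$ and in the two equations they produce; once these are correct the elimination and the arithmetic are forced, with $\alpha>0$ selecting the right root.
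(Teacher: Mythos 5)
Your Steps 1 and 2 are sound, and they actually streamline the paper's computation: rather than first pinning down $\beta$ and then exploiting $K_{X^+}\cdot \tilde F^2=0$ within each case, you feed the two blow-up numbers $K_{X^+}^2\cdot E^+=4$ and $K_{X^+}\cdot (E^+)^2=2$ through the flop-invariance of Proposition~\ref{alphabeta}(5) and eliminate $\beta K_F^2$, which yields the case-free relation $\alpha^2({-}K_X)^3=8\alpha+2$. The arithmetic then correctly forces $({-}K_X)^3=10$, $\alpha=1$, $({-}K_{Y^+})^3=18$, and the degree list does rule out index $\ge 2$ for $Y^+$.

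The gap is in the last lines of Step 3. From $\beta\in\bZ$ and $\beta\beta^+=1$ you cannot conclude $\beta=\pm1$: nothing you have established prevents $\beta^+$ from being a non-integral rational, so $(\beta,\beta^+)=(-2,-\tfrac12),\,(-3,-\tfrac13),\,(-6,-\tfrac16)$ all survive your constraints and give $K_F^2=3,2,1$ instead of $6$ (your two numerical equations become dependent once $\alpha=1$, $d=10$ are fixed, and only yield $\beta K_F^2=-6$). The missing ingredient is the integrality of $\alpha^+=-1/\beta$. The paper obtains it by intersecting $\tilde F=\alpha^+({-}K_{X^+})+\beta^+E^+$ with the strict transform of a general line in $Y^+$ avoiding $\phi^+(E^+)$; equivalently, decompose $\tilde F$ in $\Pic(X^+)=(\phi^+)^*\Pic(Y^+)\oplus\bZ E^+$, so that $\alpha^+({-}K_{Y^+})$ must be an integral class, and since $Y^+$ has index $1$ (which you can establish before this point, as it only uses $({-}K_{Y^+})^3=18$), this forces $\alpha^+\in\bZ$; then $\beta=-1/\alpha^+$ being a negative integer forces $\alpha^+=1$ and $\beta=-1$, hence $K_F^2=6$. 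So the argument is repairable, but as written the step ``$\beta\beta^+=1$ gives $\beta=\pm1$'' is a non sequitur and the cases $K_F^2=3,2,1$ are not excluded.
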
 

\proof Suppose that $\phi^+$ is the blow-up of the smooth point $p$ in the Fano threefold $Y^+.$  
First notice that $Y^+$ is a smooth Fano threefold with index $1.$ In fact, if $Y^+$ had index 2, then $X$ had index 2, which we ruled out.
If $Y^+$ had index 3 or 4, then $X^+$ would be Fano. By intersecting \eqref{setup1} with a general line in $Y^+$ not meeting $\phi^+(E^+),$ we get
$\alpha^+ \in \bN,$ hence $-\beta^+ \in \bN.$ We also notice that
$$ K_X \cdot (\tilde E^+)^2 = 2. \eqno (*)$$ 
\vskip .2cm \noindent (1) Suppose that $- \beta \in \bN$. Then $\beta = \beta^+ = -1.$ 
Combining (*) with \eqref{setup2} gives
$$ \alpha(\alpha K_X^3 + 2K_F^2) = 2,$$
hence $\alpha = 1,2$. If $\alpha = 2,$ then $2(-K_X)^3 = 2K_F^2  + 1$, so that only $\alpha = 1$ remains. 
Next we combine $K_{X^+} \cdot \tilde F^2 = 0$ with \eqref{setup1}, so that 
$$ \alpha^2 (-K_X)^3 + 8 \alpha = -2, $$
hence $$ (-K_X)^3 = 10, \ K_F^2 = 6.$$ 
Conversely, this case really exists by \cite{Ta89}. 
\vskip .2cm \noindent
(2) If $-\beta \not \in \bN,$ then $\beta = -{{1} \over {2}}$ and $\beta^+ = -2.$ Here
we obtain from $\tilde E^+ = {{1} \over {2}} \alpha^+ (-K_X) - {{1} \over {2}}F$ and (*) that
$$ \alpha^+(\alpha^+K_X^3 + 2K_F^2) = 8.$$
Since $\alpha^+$ is an odd positive integer, we conclude $\alpha^+ = 1.$ 
Then equation \eqref{setup1} gives $(-K_X)^3 = 24,$ hence $({-}K_{Y^+})^3 = 32$, which is impossible by classification.
\qed

\begin{proposition} \label{blowupq} 
Suppose $E^+$ is a quadric, either smooth or a quadric cone. Then
$\beta = \beta^+ =-1$, $\alpha = \alpha^+ = 1$, $({-}K_X)^3 = 6$, $K_F^2 = 4$, $({-}K_{Y^+})^3 = 8$ and $X
\subset \PN(\sO(1) \oplus \sO^4)$ may be realized as a
complete intersection.
\end{proposition}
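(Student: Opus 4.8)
I would prove this by first pinning down the intersection numbers on $E^+$, then running the numerical bookkeeping of Propositions~\ref{alphabeta}--\ref{blowupsmooth} in this case, and finally exhibiting the example inside $\PN(\sO_{\bP_1}(1)\oplus\sO_{\bP_1}^{\oplus 4})$. For the first point: a birational Mori contraction collapsing a divisor $E^+$ to a point with $E^+$ a (possibly singular) quadric surface is of type $E_3$ or $E_4$ in Mori's list \cite{Mori}; in both cases the discrepancy is $1$, so $K_{X^+}=(\phi^+)^{*}K_{Y^+}+E^+$ with $K_{Y^+}$ Cartier, and $\sO_{E^+}(E^+)$ is the restriction of $\sO_{\bP_3}(-1)$. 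Adjunction then gives that $K_{X^+}|_{E^+}$ is also the restriction of $\sO_{\bP_3}(-1)$, whence $K_{X^+}^{2}\cdot E^+=K_{X^+}\cdot(E^+)^{2}=(E^+)^{3}=2$. Since every ruling line $\ell^+\subset E^+$ satisfies $-K_{X^+}\cdot\ell^+=1$ and is contracted by $\phi^+$ (so $L^+\cdot\ell^+=0$), Proposition~\ref{alphabeta}(2) applies as soon as $K_F^{2}\le 6$ and forces $\beta=\beta^+=-1$, $\alpha=\alpha^+\in\bN$ (alternatively one argues as in Proposition~\ref{blowupsmooth}, obtaining $\alpha^+\in\bN$ and $-\beta^+\in\bN$ first).

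Assuming $K_F^{2}\le 6$, equation \eqref{setup2} reads $\tilde E^+=\alpha(-K_X)-F$. By Proposition~\ref{alphabeta}(5), $K_X\cdot(\tilde E^+)^{2}=K_{X^+}\cdot(E^+)^{2}=2$; expanding and using $K_X^{2}\cdot F=K_F^{2}$, $K_X\cdot F^{2}=0$ gives $\alpha(\alpha K_X^{3}+2K_F^{2})=2$, so $\alpha\in\{1,2\}$, and $\alpha=2$ forces the parity-absurd $2K_X^{3}+2K_F^{2}=1$; hence $\alpha=\alpha^+=1$ and $(-K_X)^{3}=2K_F^{2}-2$. Next \eqref{setup1} reads $\tilde F=-K_{X^+}-E^+$, and $K_{X^+}\cdot\tilde F^{2}=0$ together with the three numbers above gives $K_{X^+}^{3}+6=0$, so $(-K_X)^{3}=(-K_{X^+})^{3}=6$ and then $K_F^{2}=4$; cubing $K_{X^+}=(\phi^+)^{*}K_{Y^+}+E^+$ (the two mixed terms have trivial push-forward to $Y^+$) leaves $K_{X^+}^{3}=K_{Y^+}^{3}+(E^+)^{3}$, so $(-K_{Y^+})^{3}=8$. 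Finally $K_F^{2}=7$ is excluded by \cite{Mori} and $K_F^{2}=9$ by $F\ne\bP_2$, while for $K_F^{2}=8$ Proposition~\ref{alphabeta}(4) leaves either $\beta=\beta^+=-1$, giving $(-K_X)^{3}=14$ against Lemma~\ref{div8}, or $(\beta,\beta^+)=(-\tfrac12,-2)$, in which case $\alpha^+$ is odd so $\alpha^+=1$ and the two equations above force $(-K_X)^{3}=8$ and $(-K_X)^{3}=16$ simultaneously, a contradiction.

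For the existence, put $\sE=\sO_{\bP_1}(1)\oplus\sO_{\bP_1}^{\oplus 4}$ with tautological class $\zeta$ and fibre class $F$ on $\PN(\sE)$; here $|\zeta|$ is base point free and realizes $\PN(\sE)$ as the blow-up of $\bP_5$ along a linear $\bP_3$, contracting the exceptional divisor $S\in|\zeta-F|\cong\bP_1\times\bP_3$ along the ruling $S\to\bP_3$. Take $X=Q_1\cap Q_2$ a general complete intersection with $Q_1\in|2\zeta|$, $Q_2\in|2\zeta+F|$; both systems are globally generated, so $X$ is a smooth threefold with $-K_X=\zeta|_X$, $(-K_X)^{3}=4\zeta^{5}+2\zeta^{4}F=6$, and $\phi\colon X\to\bP_1$ is a del Pezzo fibration whose general fibre is a smooth intersection of two quadrics in $\bP_4$, so $K_F^{2}=4$. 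The anticanonical map $\psi$ is the restriction of the blow-down $\PN(\sE)\to\bP_5$; chasing $X\cap S$ inside $S\cong\bP_1\times\bP_3$ shows it contracts exactly the finitely many (in fact $8$) ruling lines of $S$ lying on $X$, hence $\psi$ is small and $X'\subset\bP_5$ is the corresponding nodal complete intersection of a quadric and a cubic, compatibly with Proposition~\ref{ci}. The strict transform $\tilde E^+=S|_X=-K_X-F$ of the exceptional divisor maps birationally onto a quadric surface in $\bP_3$, and performing the flop turns $\tilde E^+$ into that quadric, exhibiting $\phi^+\colon X^+\to Y^+$ as a type $E_3$ contraction (type $E_4$ for a special $Q_1$). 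This realizes $X$ as a complete intersection in $\PN(\sO(1)\oplus\sO^4)$.

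The genuine obstacle is the existence half: one must check that the general such complete intersection has a \emph{small} anticanonical map rather than a divisorial one, and, above all, analyse the flop carefully enough to see that $E^+$ really becomes a quadric with $\sO_{E^+}(E^+)$ of the correct type and $(-K_{Y^+})^{3}=8$. The numerical classification, by contrast, is a short and essentially forced computation once the intersection numbers on $E^+$ and Proposition~\ref{alphabeta} are available, following the template of Proposition~\ref{blowupsmooth}.
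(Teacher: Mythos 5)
Your existence construction is exactly the paper's (the complete intersection of $|2\zeta|$ and $|2\zeta+F|$ in $\PN(\sO(1)\oplus\sO^{\oplus 4})=\Bl_{\PN_3}(\PN_5)$, with the $8$ contracted ruling lines of $D\cap X$ and the flop turning $D\cap X$ into a contractible quadric), and your numerical derivation of $\alpha=1$, $({-}K_X)^3=6$, $K_F^2=4$, $({-}K_{Y^+})^3=8$ from $K_X\cdot(\tilde E^+)^2=2$ and $K_{X^+}\cdot\tilde F^2=0$ is equivalent to the paper's (which instead reads off $h^0({-}K_X)=6$ from the restriction sequence to $E^+$). So the substance is right.

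There is, however, a genuine gap in how you pin down $\beta^+$. In Setup~3.13 the coefficients $\alpha^+,\beta^+$ are taken with respect to $E^+$, not $L^+$: $\tilde F=\alpha^+({-}K_{X^+})+\beta^+E^+$. A ruling line $\ell^+\subset E^+$ has $E^+\cdot\ell^+=E^+|_{E^+}\cdot\ell^+=-1\neq 0$, so intersecting with $\ell^+$ only yields $\alpha^+-\beta^+\in\KZ$, not $\alpha^+\in\KZ$; Proposition~\ref{alphabeta}(2) does not apply as you invoke it, and the alternative you mention (general lines in $Y^+$ avoiding $\phi^+(E^+)$, as in Proposition~\ref{blowupsmooth}) is not available off the shelf here because $Y^+$ is singular and its index is not yet known. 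Concretely, what your argument fails to exclude a priori is $(\beta,\beta^+)=(-2,-\tfrac12)$, which Proposition~\ref{alphabeta}(4) explicitly allows and which you silently drop from your $K_F^2=8$ case split (it would also need ruling out for $K_F^2\le 6$ if all you know is $\beta\in\KZ$). The paper's missing ingredient is the observation that $\psi^+|_{E^+}$ has degree $1$ onto its image (since ${-}K_{X^+}|_{E^+}=\sO(1)$), hence is an isomorphism, so every $\psi^+$-exceptional curve $l^+$ satisfies $E^+\cdot l^+=1$ and therefore $\beta^+=\tilde F\cdot l^+\in\KZ$; this kills $\beta^+=-\tfrac12$ at once and reduces everything to your two cases. (The orphaned case also dies numerically: $\beta=-2$ gives $\alpha(\alpha K_X^3+4K_F^2)=2$, which with $K_F^2=8$ forces $({-}K_X)^3=30$ or a non-integer — but you should say so, or supply the degree-one argument.)
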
 

\proof By (\ref{hyp}) and (\ref{hypsymm}) we may assume that $X'$ is not hyperelliptic. Since $-K_{X^+} \vert E^+ = \sO(1),$ necessarily 
$$ H^0(-K_{X^+}) \to H^0(-K_{X^+} \vert E^+) $$
is surjective, since $\psi^+ \vert E^+$ must have degree 1. Thus $\psi^+ \vert E^+$ is actually an isomorphism. Consequently, if $l^+$ is a curve
contracted by $\psi^+,$ then 
$$E^+ \cdot l^+ = 1.$$ 
Since $-K_{X^+} \cdot l^+ = 0,$ \eqref{setup1} implies $-\beta^+ \in \bN.$ Intersecting \eqref{setup1} with a line in $E^+$ we furthermore see that $\alpha^+ - \beta^+ 
\in \bN,$ hence $\alpha^+ \in \bN.$ \\
(1) Suppose first that $-\beta \in \bN.$  
Then \eqref{setup2} gives $\beta = -1 = \beta^+ $ and $\tilde F \cdot l^+ = -1.$ It is also clear that $\alpha^+ = 1$ since $h^0(-K_{X^+} - E^+) \ne 0.$  
So $ h^0(-K_X) = 6$, so that $(-K_X)^3 = 6$ and $K_F^2 = 4;$ furthermore $(-K_{Y^+})^3 = 8.$ 

This case really exists: We have $h^0(-K_X-F) = 1$, hence 
 \[X \subset \PN(\phi_*(-K_X)) = \PN(\sE) = \PN(\sO(1) \oplus
 \sO^{\oplus 4}) = \Bl_{\PN_3}(\PN_5).\]
To realize $X$ take a complete intersection of
$Z_1 \in |2\zeta|$ and $Z_2 \in |2\zeta +F|$ general. Then $X$ is
smooth with $-K_X = \zeta|_X$. The anticanonical map $\psi: X \to X'$
is induced by the map 
 \[p: \PN(\sE) \lra \PN_5\] 
contracting $D \simeq \PN_1 \times \PN_3$ to $\PN_3$. The intersection
$E = D \cap X$ is a smooth surface, mapped by $p$ birationally to a
quadric in $\PN_3$ with exceptional locus $8$ rational curves. After
flopping these curves, the strict transform $E^+$ of $E$ becomes a
contractible quadric.
\vskip .2cm \noindent
(2) If $-\beta \not \in \bN,$ then $\beta = -  {{1} \over {2}}$ and $\beta^+ = -2.$ 
Arguing as in \ref{blowupsmooth}, (2), we obtain 
$$ \alpha^+ = 1, K_X^3 + 2K_F^2 = 8.$$
On the other hand, $K_{X^+} \cdot \tilde F^2 = 0$ leads to $(-K_X)^3 = 16,$ so that $K_F^2 = 12,$ which is absurd.   
\qed  

\begin{proposition} \label{blowupsing}
Suppose $E^+ = \bP_2$ with normal bundle $\sO(-2).$ Then $X \subset \bP(\sO_{\bP_1}(1) \oplus \sO_{\bP_1}^{\oplus 3}) = \bP(\sE)$
is a smooth element of $\vert 3\zeta + \pi^*\sO(1)\vert;$ and the flop of $X$ really has a contraction contracting a 
$\bP_2$ with normal bundle $\sO(-2).$
\end{proposition}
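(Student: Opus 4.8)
The plan is to first pin down all numerical invariants by intersection theory, exactly in the spirit of the proofs of Propositions~\ref{blowupsmooth} and \ref{blowupq}, and then to produce the example and read off the flop.

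\emph{Step 1: numerical data of the contraction.} Since $E^+\cong\bP_2$ with $N_{E^+/X^+}=\sO(-2)$, adjunction on $E^+$ gives $K_{X^+}|_{E^+}=\sO(-1)$, hence $-K_{X^+}|_{E^+}=\sO(1)$, and therefore
\[K_{X^+}^2\cdot E^+=1,\qquad K_{X^+}\cdot(E^+)^2=2.\]
By Proposition~\ref{alphabeta}(5) these transport to $K_X^2\cdot\tilde E^+=1$ and $K_X\cdot(\tilde E^+)^2=2$; likewise, since $\tilde F$ is the strict transform of a general $\phi$-fibre, $K_{X^+}\cdot\tilde F^2=K_X\cdot F^2=0$. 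I also record $K_X^2\cdot F=K_F^2$ and $K_X^3=-(-K_X)^3$.

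\emph{Step 2: determining $\alpha,\beta$.} By Proposition~\ref{alphabeta} (valid in the present setup), $(\beta,\beta^+)$ is one of $(-1,-1)$, $(-2,-\tfrac12)$, $(-\tfrac12,-2)$, the last only when $K_F^2=8$. Substituting \eqref{setup1} into $K_{X^+}\cdot\tilde F^2=0$ and \eqref{setup2} into $K_X\cdot(\tilde E^+)^2=2$ and using Step~1, one gets two Diophantine relations among $(-K_X)^3$, $K_F^2$, $\alpha$, $\alpha^+$. In the case $\beta=-2$ the first relation forces $(\alpha^+)^2(-K_X)^3=\alpha^++\tfrac12$, impossible for integral $(-K_X)^3$; in the case $K_F^2=8$, $\beta=-\tfrac12$, the two relations together give $\alpha^+=\tfrac43$, also impossible. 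Hence $\beta=\beta^+=-1$, $\alpha=\alpha^+\in\bN$, and the relations become $\alpha^2(-K_X)^3=2\alpha+2$ and $\alpha^2(-K_X)^3=2\alpha K_F^2-2$; comparing them yields $\alpha(K_F^2-1)=2$, so $\alpha=1$, $K_F^2=3$, $(-K_X)^3=4$.

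\emph{Step 3: the model $X\subset\bP(\sE)$.} Now $-K_X=F+\tilde E^+$ with $\tilde E^+\in|{-}K_X-F|$, so $\lambda=1$ (equality, since members of $|{-}K_X-\lambda F|$ are irreducible for $K_F^2<8$). As $K_F^2=3$ the general fibre is a cubic surface and $-K_X$ is $\phi$-very ample, so $X\hookrightarrow\bP(\sE)$ with $\sE=\phi_*(-K_X)$ of rank $4$ and $-K_X=\zeta|_X$; computing $h^0(-K_X)=5$, $h^0(-K_X-F)=1$, $h^0(-K_X-2F)=0$ forces $\sE=\sO_{\bP_1}(1)\oplus\sO_{\bP_1}^{\oplus 3}$, and adjunction then gives $[X]=3\zeta+\pi^*\sO(1)$.

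\emph{Step 4: existence and the flop.} Conversely take $X\in|3\zeta+\pi^*\sO(1)|$ general. Since $|3\zeta+F|$ is base point free, $X$ is smooth, and $-K_X=\zeta|_X$ is big and nef but not ample. The morphism $|\zeta|$ realizes $\bP(\sE)$ as the blow-up of $\bP_4$ along a plane $\bP_2$, with exceptional divisor $D_0=\bP(\sO_{\bP_1}^{\oplus 3})\cong\bP_2\times\bP_1$ on which $\zeta=\sO(1,0)$; a fibre count shows $X\cap D_0$ is a smooth divisor of type $(3,1)$ containing exactly the nine $\zeta$-trivial lines $\{x\}\times\bP_1$ over the nine points of a complete intersection of two cubics in $\bP_2$. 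Hence $\psi\colon X\to X'$ is small with these nine exceptional curves, and the unique member $E\in|(\zeta-F)|_X|$ equals $E=X\cap D_0\cong\Bl_9\bP_2$, mapped birationally onto $\bP_2\subset\bP_4$ by contracting precisely those nine $(-1)$-curves. Therefore the strict transform $E^+=\tilde E\subset X^+$ is a $\bP_2$; since the $K$-degree is flop-invariant one gets $-K_{X^+}|_{E^+}=\sO(1)$, whence $N_{E^+/X^+}=\sO(-2)$ by adjunction, and since $\rho(X^+)=2$ and the $E^+$-ray is distinct from the $\psi^+$-ray, $\phi^+$ is exactly the contraction of this $\bP_2$.

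\emph{Main obstacle.} The delicate point is Step~4: verifying that the nine curves contracted by $\psi$ coincide with the nine $(-1)$-curves of $E\to\bP_2$ and, crucially, that flopping them leaves the strict transform $E^+$ a \emph{smooth} $\bP_2$, so that its normal bundle can be computed and $\phi^+$ identified. This rests on checking the normal bundles of the nine curves together with the genericity assertions, which I expect to be the technical heart; the numerical part of Steps~1--3 is routine once the intersection numbers of the contraction are in hand.
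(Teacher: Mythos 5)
Your proposal is correct and follows essentially the same route as the paper: pin down $(\beta,\beta^+)=(-1,-1)$, $\alpha=\alpha^+=1$, $K_F^2=3$, $({-}K_X)^3=4$ from the flop-invariant intersection numbers $K_{X^+}^2\cdot E^+=1$, $K_{X^+}\cdot(E^+)^2=2$, $K_{X^+}\cdot\tilde F^2=0$, deduce $\sE=\sO(1)\oplus\sO^{\oplus 3}$ and $X\in|3\zeta+\pi^*\sO(1)|$, and then realize the flop via the blow-down $\bP(\sE)\to\bP_4$, with $E=X\cap D_0$ a nine-point blow-up of $\bP_2$ whose $(-1)$-curves are the flopping curves and whose strict transform is the contractible $\bP_2$ with normal bundle $\sO(-2)$. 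Two small points to make explicit: $\alpha(K_F^2-1)=2$ also admits $(\alpha,K_F^2)=(2,2)$, which must be discarded because it forces $({-}K_X)^3=\tfrac32$; and dismissing $(\beta,\beta^+)=(-2,-\tfrac12)$ from the relation $(\alpha^+)^2({-}K_X)^3=\alpha^++\tfrac12$ requires first noting $\alpha^+\in\bZ$ (intersect $\tilde F$ with a line in $E^+$, using $E^+\cdot l=-2$), since $\alpha^+=\tfrac12$ would otherwise yield the integral value $({-}K_X)^3=4$.
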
 

\proof The first part of the proof is parallel to the proof of 3.14. If $-\beta \in \bN,$ we end up with $(-K_X)^3 = 4$ and $K_F^2 = 3$ and necessarily
$X \subset \bP(\phi_*(-K_X)) $ with $\phi_*(-K_X) = \sO(1) \oplus \sO^3$ over $\bP_1$ is in the linear system as stated in the proposition. \\
If conversely - in the obvious notation - $X \in \vert 3 \zeta + F \vert $ is a smooth element, then $-K_X = \zeta \vert X$ and it is easily 
checked that the blow-down 
$$ \bP(\sE) \to \bP_4$$ defined
by $\zeta$ restrict to a small contraction on $X$. 
It remains to show that $\phi^+$ really contracts a plane with normal
bundle $\sO(-2).$ This can be checked directly: let $D \subset
\PN(\sE)$ be the exceptional divisor and $E = X \cap D$. Then $E$ is a
smooth surface and the projection map $D \to \PN_2$ restricts to a
birational map on $E$, contracting $9$ rational curves. These curves
are the exceptional locus of $\psi: X \to X'$ and are of type
$(-1,-1)$. After flop, the strict transform $E^+$ of $E$ becomes a
contractible $\PN_2$. Following the restriction $-K_X|_E$ through the
flop diagram we find $-K_{X^+}|_{E^+} = \sO(1)$, hence $N_{E^+/X^+} =
\sO(-2)$ as claimed.

\vskip .2cm \noindent
If $-\beta \not \in \bN,$ then $-\beta = {{1} \over {2}}$ and by the same computations as in (\ref{blowupq}) we obtain $\alpha^+ = 1$ and then
that $({-}K_X)^3 = 12$ and hence $K_F^2 = 10$ which is absurd.
\qed 


\begin{proposition} \label{dpcurve}
 Suppose $\dim \phi^+(E^+) = 1.$ Then $X^+ = \Bl_{C^+}(Y^+)$ the blowup of a smooth Fano threefold $Y^+$ along a smooth curve $C^+$ by \cite{Mori} and $X$ is one of the cases listed in table~\ref{tdpcurve} in the appendix.
\end{proposition}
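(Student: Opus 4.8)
The plan is to parallel the structure of the preceding propositions (\ref{blowupsmooth}, \ref{blowupq}, \ref{blowupsing}), but now with $\phi^+$ contracting the divisor $E^+$ onto a smooth curve $C^+\subset Y^+$. By \cite{Mori} the birational Mori contraction $\phi^+$ is the blowup of a smooth threefold $Y^+$ along a smooth curve $C^+$; since $-K_X$ is big and nef, $Y^+$ is a (possibly mildly singular, but by the Mori description smooth) Fano threefold. As in the Setup before \ref{blowupsmooth}, we let $-K_{Y^+}=rH^+$ with $1\le r\le 4$; the index $r$ cannot be such that $X^+$ becomes Fano, and $r=2$ is excluded since $X$ has index $1$, so in practice $r\in\{1,2\}$ will need to be tracked (with $r=2$ only surviving in restricted numerical ranges). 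The genus $g(C^+)$ of the blown-up curve and its degree $H^+\cdot C^+$ enter the intersection numbers.

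The key computational engine is Proposition \ref{alphabeta}, which remains valid in this setup: writing $\tilde F=\alpha^+(-K_{X^+})+\beta^+E^+$ and $\tilde E^+=\alpha(-K_X)+\beta F$, part (1) gives $\beta\beta^+=1$ and $\alpha+\beta\alpha^+=\alpha^++\beta^+\alpha=0$, while parts (2)--(4) constrain $(\beta,\beta^+)$ to $(-1,-1)$ or $(-\tfrac12,-2)$ (or the mirrored pair). First I would intersect \eqref{setup1} with a general fiber line $\ell$ of $\phi^+$ over $C^+$ (so $E^+\cdot\ell=-1$, $-K_{X^+}\cdot\ell=1$) to pin down integrality of $\alpha^+$ and $\beta^+$, and intersect with a curve meeting $E^+$ appropriately to handle $\alpha$. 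Then I would compute the three surface/curve intersection numbers on $X^+$: using $K_{X^+}=\phi^{+*}K_{Y^+}+E^+$ one gets $K_{X^+}^3=(K_{Y^+})^3+2K_{Y^+}\cdot C^++2-2g(C^+)$ type formulas, and the analogous identities $K_X^2\cdot\tilde E^+=K_{X^+}^2\cdot E^+$, $K_X\cdot(\tilde E^+)^2=K_{X^+}\cdot(E^+)^2$ from \ref{alphabeta}(5). Substituting \eqref{setup1} and \eqref{setup2} and using $(-K_X)^3=(-K_{X^+})^3\le 22$ together with $K_F^2\le 6$ (and $K_F^2\ne 7$), these become a finite Diophantine system in $(\alpha,\beta,(-K_X)^3,K_F^2,r,g(C^+),H^+\cdot C^+)$.

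The bulk of the proof is then a finite case analysis: for each admissible Fano threefold $Y^+$ of index $1$ or $2$ (Iskovskikh--Mori--Mukai classification, with the degree bounds forced by $(-K_X)^3\le 22$) and each smooth curve $C^+\subset Y^+$ satisfying the numerical constraints above, check that the resulting $X^+=\Bl_{C^+}(Y^+)$ has $-K_{X^+}$ big and nef, that $\phi\colon X\to Y$ is of del Pezzo type with the predicted $K_F^2$, and that the flop $X\dashrightarrow X^+$ behaves as claimed — producing exactly the entries of table~\ref{tdpcurve}. For existence in each surviving case I would exhibit $X^+$ (or $X$) concretely, typically as a divisor or complete intersection in a projectivized bundle $\PN(\sE)$ over $\PN_1$, with $-K$ the restriction of the tautological class, as was done in \ref{beta}, \ref{dpdp}, \ref{delpezzoconic}; the base locus of $|{-}K_X-\lambda F|$ or of a suitable twist should coincide with $\exc(\psi)$, and computing $h^0$ of that twist via iterated ideal sequences confirms the flopped contraction type.

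The main obstacle I anticipate is bookkeeping rather than a single hard idea: ensuring the enumeration of $(Y^+,C^+)$ is exhaustive and that no spurious numerical solution slips through, and — for each genuinely occurring case — carrying out the explicit construction of $X$ or $X^+$ inside a $\PN_1$-bundle and verifying smoothness and the smallness of $\psi$ (including identifying the exceptional curves with a base locus and checking their normal bundles are of type $(-1,-1)$). As in the earlier propositions, a handful of borderline cases (higher $K_F^2$, e.g.\ $K_F^2=5,6$) are likely to resist an explicit construction and will be left open, consistent with the paper's stated difficulties; these should still be listed in table~\ref{tdpcurve} with their existence flagged as open.
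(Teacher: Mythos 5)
Your overall strategy -- integrality of $\alpha,\beta$ via intersection with suitable curves, the blow--up intersection formulas on $X^+$ combined with \ref{alphabeta}(5), a finite Diophantine enumeration bounded by $({-}K_X)^3\le 22$ and the classification of $Y^+$, followed by a case--by--case existence/exclusion analysis -- is exactly the paper's. But there is one concrete error that would derail the enumeration: you restrict the index of $Y^+$ to $r\in\{1,2\}$ by transplanting the argument from the point--blow--up case (\ref{blowupsmooth}). That argument does not apply here. For a blow--up along a curve one has $-K_{X^+}=rL^+-E^+$, which is primitive in $\Pic(X^+)$ regardless of $r$, so $r=2$ does not force $X$ to have index $2$; and $X^+$ can fail to be Fano for any $r$, since ampleness of $-K_{X^+}$ depends on the position of $C^+$, not on $r$ alone. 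The paper's enumeration runs over $1\le r\le 4$, and indeed the majority of the cases in table~\ref{tdpcurve} -- including several that genuinely exist, e.g.\ blow--ups of curves in $\PN_3$ and in the quadric -- have $r_{Y^+}=3$ or $4$. Your restriction would omit all of them, so the list you would produce is not the one claimed.

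Two smaller points. First, you also assume $K_F^2\le 6$ in the Diophantine step, but $K_F^2=8$ must be carried through the enumeration (it is precisely where the half--integral $(\beta,\beta^+)=(-\tfrac12,-2)$ solutions of \ref{alphabeta} can occur); the paper keeps one such case (No.~22) and kills it afterwards by the $\PN(\sF)$--structure of quadric--bundle fibrations, so omitting it happens not to change the answer but leaves a gap in the argument. Second, for the existence constructions the paper does not realize these threefolds inside $\PN(\sE)\to\PN_1$ as you propose (the del Pezzo fibration on $X$ only appears after the flop); instead it builds $C^+$ explicitly as a curve residual to a line (or a union of disjoint lines) in an anticanonical--type divisor on a del Pezzo surface $S\subset Y^+$, and then checks that $|{-}K_{X^+}|$ is base point free and that the residual lines become the flopping curves. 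That construction scheme is the workable one here and you would need something like it to settle the existence column.
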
 

\proof (1) Let $l^+$ be a curve contracted by $\phi^+.$ Intersecting \eqref{setup1} with $l^+$ gives
$$ \alpha^+ - \beta^+ \in \bN.$$
Choose a line $C' \subset Y^+$. If $r = 1,$ we can choose $C' $ disjoint from $C^+$. In fact, suppose that all lines meet $C^+$ and consider
their strict transforms $C$ in $X^+.$ Then $K_{X^+} \cdot C = 0$ so that $\psi^+$ would not be small. Hence the general line $C'$ is disjoint from
$C^+.$ Then we intersect \eqref{setup1} with $C'$ and obtain $\alpha^+ \in \bN$, hence $-\beta^+ \in \bN.$ \\
In the other cases we simply get 
$$ r \alpha^+ \in \bN,$$
hence $r(-\beta)^+ \in \bN,$ too. 

\vskip .2cm \noindent (2) Now the reasoning of (\ref{alphabeta})
applies: we have $\beta\beta^+ = 1$ and $\alpha + \alpha^+\beta =
0$. If $K_F^2 \leq 6,$ then $\alpha, \beta \in \KZ$. The case $\alpha, \beta \not\in \KZ$ may only happen if $K_F^2 =
8$, then $\alpha = \frac{\tilde{\alpha}}{2}, \beta =
\frac{\tilde{\beta}}{2}$ with $\tilde{\alpha}, \tilde{\beta} \in \KZ$ odd. Define
 \[\alpha', \beta' := \left\{\begin{array}{ll} \alpha, \beta & \mbox{ if
     } \alpha, \beta \in
     \KZ\\ 2\alpha, 2\beta  & \mbox{ if } \alpha, \beta \not\in
     \KZ\end{array}\right.\]
Then $r\beta^+
\in \KZ$ implies
 \begin{equation} \label{betar}
  \beta' \mid r.
 \end{equation}
\vskip .2cm \noindent
Let $d$ resp. $g$ be the degree resp. the genus of $C^+ \subset X^+.$ Then the following formulas are well-known (see e.g. \cite{JPR},p.603).
$$ (E^+)^3 = -rd + 2-2g; $$
$$ K_{X^+}^2 \cdot E^+ = rd + 2-2g;$$
$$ K_{X^+} \cdot (E^+)^2 = 2-2g; $$
$$ (-K_X)^3 = r^3 (L^+)^3 - 2rd + 2g-2.$$ 
Using these equations and introducing
$$ \sigma := rd + 2-2g,$$ 
we find the following relations:

\begin{enumerate}
 \item $0 = \alpha^2K_X^3 + 2\alpha\sigma + 2-2g$,
 \item $0 = \alpha({-}K_X)^3 + \beta K_F^2 - \sigma$,
 \item $0 = \alpha^2 + 2\alpha\beta K_F^2 + 2-2g$,
 \item $\left(\frac{\alpha'(\alpha'+1)(2\alpha'+1)}{12}({-}K_X)^3 +
     2\alpha'+1\right) -1 \le -\beta'(\frac{\alpha'(\alpha'+1)}{2}K_F^2 + 1)$.
\end{enumerate}
Here $K_X\cdot F^2 = 0$, $K_F^2 = K_X^2\cdot F$ and $\tilde{E}^+ =
\alpha (-K_X) + \beta F$ imply (1) and (2). Equation (3) follows from
(1) and (2). To show (4) consider the ideal sequence of $-\beta'>0$
general fibers on $X$ and twist with $\alpha'({-}K_X)$:
 \[0 \lra \sO_X(\epsilon\tilde{E}^+) \lra \sO_X({-}\alpha'K_X) \lra
 \bigoplus_{-\beta'}\sO_F(-\alpha'K_F) \lra 0,\]
with $\epsilon = 1,2$ (depending on whether $\beta' = \beta \in \KZ$
or not). Now $h^0(X, \epsilon\tilde{E}^+) = 1$ and Riemann-Roch on $X$ and $F$,
respectively, shows the claim.  

\vskip .2cm \noindent
To run a computer program we have to prove effective bounds for all
data involved. Since $-K_{X^+}|_{E^+}$ is still big and nef, we have
$\sigma > 0$. Since $\psi$ is small, $X'$ has only terminal
singularities, is hence smoothable by \cite{Namikawa}. Then the
smoothing $\X_t$ has the same index as $X$ by \cite{smoothing}, which
is $1$ by assumption. Moreover, $|{-}K_X|$ is base point free, hence
 \[4 \le ({-}K_X)^3 \le 22.\]
The image $Y^+$ of $\phi^+$ is a smooth Fano threefold of index $r$,
i.e. $1 \le r \le 4$ and
 \[0 < ({-}K_{Y^+})^3 \le \left\{\begin{array}{ll} 22, & r = 1\\ 40, & r =
     2\end{array}\right. \quad \mbox{ and }  ({-}K_{Y+})^3 = \left\{\begin{array}{ll} 54, & r = 3\\ 64, & r =
     4\end{array}\right.\]
Then $22 \ge ({-}K_X)^3 = ({-}K_Y)^3 - \sigma -rd \ge 4$ gives 
 \[d \le \frac{({-}K_{Y^+})^3 -5}{r} \le 17 \quad \mbox{ and } \quad
 \sigma \le ({-}K_{Y^+})^3-r-4 \le \left\{\begin{array}{ll} 17, & r=1\\ 34,
       & r=2\\ 47, & r=3\\ 54, & r=4\end{array}\right.\]  
Finally $\sigma = rd-2g+2$ implies $g \le \frac{17r}{2}+1$.

\vskip .2cm \noindent
Running a computer program (written in C), this leads to the following tabular:

\vspace{0.3cm}

\begin{tabular}{c|c|c|c|c|c|c|c|c}
Nr. & $r$ & $({-}K_X)^3$ & $K_F^2$ & $g$ & $d$ & $\alpha$ & $\beta$ & $(L^+)^3$\\\hline
1 & 1 & 4 & 6 & 1 & 6 & 3 & -1 & 16\\
2 & 1 & 6 & 6 & 1 & 6 & 2 & -1 & 18\\
3 & 1 & 8 & 5 & 0 & 1 & 1 & -1 & 12\\
4 & 1 & 10 & 6 & 0 & 2 & 1 & -1 & 16\\\hline
5 & 2 & 4 & 6 & 1 & 3 & 3 & -1 & 2\\
6 & 2 & 10 & 6 & 0 & 1 & 1 & -1 & 2\\
7 & 2 & 4 & 3 & 1 & 3 & 3 & -2 & 2\\
8 & 2 & 4 & 5 & 1 & 5 & 5 & -2 & 3\\
9 & 2 & 6 & 4 & 4 & 8 & 3 & -2 & 4\\
10 & 2 & 8 & 2 & 1 & 2 & 1 & -2 & 2\\
11 & 2 & 8 & 6 & 1 & 6 & 3 & -2 & 4\\
12 & 2 & 10 & 3 & 0 & 1 & 1 & -2 & 2\\
13 & 2 & 12 & 3 & 1 & 3 & 1 & -2 & 3\\
14 & 2 & 14 & 4 & 0 & 2 & 1 & -2 & 3\\
15 & 2 & 16 & 4 & 1 & 4 & 1 & -2 & 4\\
16 & 2 & 18 & 5 & 0 & 3 & 1 & -2 & 4\\
17 & 2 & 22 & 6 & 0 & 4 & 1 & -2 & 5\\\hline
18 & 3 & 4 & 5 & 9 & 11 & 8 & -3 & 2\\
19 & 3 & 14 & 4 & 5 & 8 & 2 & -3 & 2\\
20 & 3 & 16 & 5 & 3 & 7 & 2 & -3 & 2\\
21 & 3 & 18 & 6 & 1 & 6 & 2 & -3 & 2\\
22 & 3 & 8 & 8 & 8 & 10 & $\frac{7}{2}$ & $-\frac{3}{2}$ & 2\\\hline
23 & 4 & 4 & 6 & 15 & 11 & 7 & -2 & 1\\
24 & 4 & 4 & 3 & 15 & 11 & 7 & -4 & 1\\
25 & 4 & 6 & 5 & 8 & 9 & 7 & -4 & 1\\
26 & 4 & 10 & 3 & 10 & 9 & 3 & -4 & 1\\
27 & 4 & 12 & 4 & 7 & 8 & 3 & -4 & 1\\
28 & 4 & 14 & 5 & 4 & 7 & 3 & -4 & 1\\
29 & 4 & 16 & 6 & 1 & 6 & 3 & -4 & 1\\\hline
\end{tabular}

\vskip .4cm \noindent 
(3) Assume $\beta = r$. Then $r \mid (\alpha+1)$ and we several times use the following argument: let $F_1, F_2 \in
|F|$ be two general elements. Then the strict transforms $\tilde{F}_1^+, \tilde{F}_2^+
\in |\alpha L^+-\frac{\alpha+1}{r}E^+|$ cut out the exceptional curves for $\psi$ and we find the degree of the exceptional curves of $\psi$ in $Y^+$ 
 \begin{equation}\label{degpsi}
  L^+\cdot \exc(\psi) = L^+\cdot (\alpha L^+- \frac{\alpha+1}{r}E^+)^2 = \alpha^2 (L^+)^3 - (\frac{\alpha+1}{r})^2d,
\end{equation}
 hence
 \begin{equation} \label{gradarg}
   \alpha^2(L^+)^3 > (\frac{\alpha+1}{r})^2d.
 \end{equation}

\vskip .2cm \noindent We now consider the cases 1-29 seperately.
\vskip .2cm \noindent 
(3a) Assume $r=1$.

\vspace{0.2cm}

\noindent {\bf No.1,2:} open.

\vspace{0.2cm}

\noindent {\bf No.3,4:} exist by \cite{Isklines} and
\cite{Ta89}, respectively.

\vskip .2cm \noindent 
(3b) Assume $r=2$.

\vspace{0.2cm}

\noindent {\bf No.5,6:} Here $F$ would be divisible, i.e. they cannot exist.

\vspace{0.2cm}

\noindent {\bf No.7:} We have $\tilde{F}^+ = -K_{X^+} + (L^+-E^+)$, hence $h^0(X^+, L^+-E^+) = 0$. Consider the twisted ideal sequence of $C^+$ in $Y^+$
 \[0 \lra \sI_{C^+}(H^+) \lra \sO_{Y^+}(H^+) \lra \sO_{C^+}(H^+) \lra 0.\]
By Riemann-Roch on $Y^+$ we have $h^0(Y^+, H^+) = (H^+)^3+2 = 4$. Riemann-Roch on $C^+$ gives $h^0(C^+, H^+|_{C^+}) = 3$. Then $h^0(Y^+, \sI_{C^+}(H^+)) \not= 0$, a contradiction, so this case does not exist.

\vspace{0.2cm}

\noindent {\bf No.8:} Open.

\vspace{0.2cm}

\noindent {\bf No.9:} Does not exist by the same argument as in No.7.

\vspace{0.2cm}

\noindent {\bf No.10:} Excluded by \eqref{gradarg}.

\vspace{0.2cm}

\noindent {\bf No.11:} Open.

\vspace{0.2cm}

\noindent {\bf No.12:} This case exists, we will give a construction. Here
$\tilde{F}^+ = L^+-E^+$ and \eqref{degpsi} shows $\phi^+(\exc(\psi))$ is
a line. For the construction, we assume the image in $Y^+$ of a general $\tilde{F}^+_1$ is a smooth surface $S \in |H^+|$. Then the restriction of another $\tilde{F}^+_2$ to $S$ splits into $C+$ and the exceptional locus $R$ of $\psi$. We construct $R$, $C^+$, $S$ and $Y^+$ explicitely:  

Let $\nu: Y^+ \to \PN_3$ be a double covering ramified along a general quartic, i.e. $Y^+$ is a smooth Fano threefold with $-K_{Y^+} = \nu^*\sO_{\PN_3}(2) = 2H^+$. Take $S \subset Y^+$ a general element in $|H^+|$. Then $H^+$ restricts to $-K_S$ and $S$ is a smooth del Pezzo surface of degree $2$. Hence 
 \[\pi: S \to \PN_2\]
may be realized as blowup in $7$ points in general position. Let $l_1, \dots, l_7$ be the
exceptional curves for $\pi$. Define
 \[R \in |\pi^*\sO_{\PN_2}(1)-l_1-l_2|, \quad \mbox{ and } \quad C^+ \in
 |\pi^*\sO_{\PN_2}(2)-l_2-\cdots -l_7|\]
general. Then $R+C^+ = -K_S$, $R\cdot C^+ = 2$, and $R$, $C^+$ are both lines in $Y^+$. Define 
 \[X^+ = \Bl_{C^+}(Y^+).\]
   
To show $-K_{X^+}$ is nef it is enough to prove $|\sI_{C^+}(2H^+)|$ is base
point free. Consider the twisted ideal sequence 
 \[0 \lra \sI_S(2H^+) \lra \sI_{C^+}(2H^+) \lra \sI_{C^+/S}(2H^+) \lra 0.\]
Then all sections
in $H^0(S,\sI_{C^+/S}(2H^+))$ lift to $Y^+$ since $H^1(Y^+, \sI_S(2H^+)) = H^1(Y^+, \sO_{Y^+}(H^+)) = 0$. This means it suffices to prove
$|\sI_{C^+/S} \otimes \sO_{Y^+}(2H^+)|_S|$ is base point free. We have $H^+|_S = -K_S$ and 
 \[2H^+|_S-C^+ = (\pi^*\sO_{\PN_2}(2)
 -l_1-l_2-l_3-l_4) + (\pi^*\sO_{\PN_2}(2) -l_1-l_5-l_6-l_7),\]
hence the sum of two systems of quadrics in $\PN_2$ through 4 general points. These are base point free. Just numerically we find $({-}K_X)^3 = 10 > 0$ as claimed.

Since $C^+$ meets the line $R$ in $2$ points transversally, the strict transform $R^+$ of
$R$ in $X^+$ is a smooth anticanonically trivial rational curve, hence contracted by
$\psi^+$. It remains to show the flop $X$ exists and admits a del Pezzo
fibration. First the normal bundle of $R^+$ in $X^+$ is of type
$(-1,-1)$: let $S^+ \simeq S$ be the strict transform of $S$ in
$X^+$. Then $N_{R^+/S^+} =  \sO(-1)$ and we have
 \[0 \lra N_{R^+/S^+} \lra N_{R^+/X^+} \lra N_{S^+/X^+}|_{R^+} \lra 0.\]
Since $R^+$ is anticanonically trivial, the degree of $N_{R^+/X^+}$ is $2$, hence $N_{S^+/X^+}|_{R^+} = \sO(-1)$ and the sequence splits. This shows the flop is a simple flop. 

By assumption, the pencil on $X$ should be given by the strict transform of the system $|L^+-E^+|$.
The twisted ideal sequence
 \[0 \lra \sI_S(H^+) \lra \sI_{C^+}(H^+) \lra \sI_{C^+/S}(H^+) \lra 0\]
shows $h^0(Y^+, \sI_{C^+}(H^+)) = 1 + h^0(S, \sI_{C^+/S}(H^+)) = 2$ and
the base locus is exactly $R$. This gives a map $X \to \PN_1$ and
$K_F^2 = 3$ follows easily.

\vspace{0.2cm}

\noindent {\bf No.13:} Excluded by \eqref{gradarg}.

\vspace{0.2cm}

\noindent {\bf No.14:} This case exists and can be constructed the same way as No.12 above. Here $Y^+ \subset \PN_4$ is a cubic, hence a general $S \in |H^+|$ is a cubic surface, i.e. the blowup of $\PN_2$ in $6$ points. With the same notation as in No.12, define
 \[R \in |\pi^*\sO_{\PN_2}(1)-l_1-l_2| \quad \mbox{ and } \quad C^+ \in |\pi^*\sO_{\PN_2}(2)-l_3-\cdots-l_6|.\]
Then $R$ is a line and $C^+$ a conic in $Y^+$. The blowup $X^+= \Bl_{C^+}(Y^+)$ has all desired properties.

\vspace{0.2cm}

\noindent {\bf No.15:} Excluded by \eqref{gradarg}.

\vspace{0.2cm}

\noindent {\bf No.16:} Exists, the construction is as in No.12. Take the complete intersection of two quadrics in $\PN_5$ for $Y^+$. Then a general $S \in |H^+|$ is a del Pezzo surface of degree $4$, the blowup of $\PN_2$ in $5$ points. Take
 \[R \in |\pi^+\sO_{\PN_2}(1)-l_1-l_2| \quad \mbox{ and } \quad C^+ \in |\pi^*\sO_{\PN_2}(2)-l_3-\cdots-l_5|.\]
Then $R$ is a line and $C^+$ a rational curve of degree $3$ in $Y^+$ and define $X^+ = \Bl_{C^+}(Y^+)$.

\vspace{0.2cm}

\noindent {\bf No.17:} Exists, the construction is as in No.12. Take a smooth Fano threefold of type $V_{2,5}$ for $Y^+$ and $S \in |H^+|$ general. Then $S$ is the blowup of $\PN_2$ in $4$ points. Take
 \[R \in |\pi^+\sO_{\PN_2}(1)-l_1-l_2| \quad \mbox{ and } \quad C^+ \in |\pi^*\sO_{\PN_2}(2)-l_3-l_4|.\]
As above, $R$ is a line and $C^+$ a rational curve of degree $4$ in $Y^+$. Define $X^+ = \Bl_{C^+}(Y^+)$.

\vskip .2cm \noindent 
(3c) Assume $r=3$.

\vspace{0.2cm}

\noindent {\bf No.18:} Open.

\vspace{0.2cm}

\noindent {\bf No.19:} Excluded by \eqref{gradarg}.

\vspace{0.2cm}

\noindent {\bf No.20:} Exists, the construction is as No.12. Here
$\tilde{F}^+ = 2L^+-E^+$ and $\exc(\psi)$ is a line by \eqref{degpsi}. 
Let $Y^+$ be a smooth quadric and $S \subset Y^+$ a general element in $|2H^+|$. Then $S$ is a smooth del Pezzo
surface of degree $4$, hence the blowup of $\PN_2$ in $5$ points. Define
 \[R \in |\pi^*\sO_{\PN_2}(1)-l_1-l_2|, \quad \mbox{ and } \quad C^+ \in
 |\pi^*\sO_{\PN_2}(5)-l_1-l_2-2l_3-2l_4-2l_5|\]
general. Then $R+C^+ = -2K_S$, $R\cdot C^+ = 3$, $R$ is a line and $C^+$ a smooth curve of genus $3$ with $-K_S\cdot C^+ = 7$. Define $X^+ = \Bl_{C^+}(Y^+)$.

To show $-K_{X^+}$ is nef it is enough to prove $|\sI_{C^+}(3H^+)|$ is base
point free and as above it suffices to prove this for
$|\sI_{C^+/S} \otimes \sO_{Y^+}(3H^+)|_S|$, which is clear. The pencil on $X$ is defined by the strict transform of $|\sI_{C^+}(2H^+)|$, which admits exactly $2$ sections and has base locus $R$.

\vspace{0.2cm}

\noindent {\bf No.21:} Exists, the construction is similar to No.20 above. By \eqref{degpsi} the exceptional locus of $\psi$ should have degree $2$, we take the unioin of two disjoint lines $R_1$ and $R_2$ and define $C^+ = -2K_S-R_1-R_2$. More precisely: take again $S \in |2H^+|$ general and define
 \[R_1 \in |\pi^*\sO_{\PN_2}(1)-l_1-l_2|, \quad \mbox{ and } \quad R_2 \in |\pi^*\sO_{\PN_2}(1)-l_1-l_3|.\]
Then $C^+ \in |\pi^*\sO_{\PN_2}(4)-l_2-l_3-2l_4-2l_5|$ general is a smooth elliptic curve of degree $6$ intersecting each $R_i$ in $3$ points. The system $-3K_S-C^+ = -K_S+R_1+R_2$ is base point free on $S$ and $-2K_S-C^+ = R_1+R_2$ has exactly one section, hence $|2L^+-E^+|$ defines the pencil on $X$ after flopping the strict transforms of $R_1$ and $R_2$.

\vspace{0.2cm}

\noindent {\bf No.22:} This case does not exist: by \cite{Mori}, $X \subset \PN(\sF)$ for some rank $4$ vector bundle $\sF$ on $\PN_1$ and $X \in |2\zeta + \pi^*\sO(\mu)|$ for some integer $\mu$. By the same argument as in Proposition~\ref{dpdp}, II, 10.), we must have $\mu = 4$ and $c_1 = c_1(\sF) = -3$. Then
 \[\tilde{E}^+ = \frac{7}{2}({-}K_X) - \frac{3}{2}F = 7\zeta|_X + 2F.\]
Then $h^0(X, \zeta|_X) = h^0(\PN_1, \sF) = 0$, contradicting $c_1(\sF) = -3$.

\vskip .2cm \noindent 
(3d) $Y^+$ has index $4$, i.e. $Y^+ = \bP_3.$

\vspace{0.2cm}

\noindent {\bf No.23:} Cannot exist since $F$ is not divisible.

\vspace{0.2cm}

\noindent {\bf No.24:} Suppose that $C^+ $ lies on a cubic:
$$ H^0(\sI_{C^+}(3)) \ne 0.$$ 
Therefore 
$$h^0(3L^+-E^+) \ne 0 \eqno (*) $$ 
Using $7(-K_{X^+}) = E^+ + 4\tilde F$, putting in $L^+$ and dividing by $4$ we obtain
$7 L^+ = 2E^+ + \tilde F$, hence
$$ H^0(7L^+ - 2 E^+) = 2.$$ 
But $ 7L^+ - 2 E^+ = 2(3L^+ - E^+) + L^+$ which gives via (*) an inequality $h^0(7L^+-2E^+) \geq 4.$   
Hence $C^+$ does not lie on a cubic. Since $C^+$ is contained in a quadric (since $h^0(-K_{X^+}) \ne 0$),  we may apply a theorem of
Gruson-Peskine, see \cite{Ha87},p.151, and obtain $g(C^+) \leq 15,$ a contradiction.

\vspace{0.2cm}

\noindent {\bf No.25:} Open. Here the argument of No.24 does not work. 

\vspace{0.2cm}

\noindent {\bf No.26:} Here $3L^+ - E^+ = \tilde F,$ hence
$$ h^0(\sI_{C^+}(3)) = 2.$$ 
By reasons of degree, $C^+$ is the intersection of two cubics $Q_i = \phi_+(\tilde F_i).$ 
But $Q_1 \cap Q_2$ must contain the images of curves which are contracted by $\psi^+,$ hence must contain rational curves. 
This rules out No.26.

\vspace{0.2cm}

\noindent {\bf No.27:} Exists, the construction is the same as No.12. By \eqref{degpsi} the degree of the exceptional curves of $\psi$ is $1$. Take $S \in |3H^+|$ general. Then $S$
is a smooth cubic, hence the blowup of $\PN_2$ in $6$ points. Take
  \[R \in |\pi^*\sO_{\PN_2}(1)-l_1-l_2|, \quad C^+ \in
 |\pi^*\sO_{\PN_2}(8)-2l_1-2l_2-3l_3-\cdots -3l_6|\]
and define $X^+ = \Bl_{C^+}(Y^+)$.

\vspace{0.2cm}

\noindent {\bf No.28:} Exists, the construction is as in No.21. By \eqref{degpsi} the degree of the exceptional curves is $2$, we take again two disjoint lines $R_1$ and $R_2$ in $S$, i.e.
 \[R_1 \in |\pi^*\sO_{\PN_2}(1)-l_1-l_2|, \quad R_2 \in |\pi^*\sO_{\PN_2}(1)-l_1-l_3|.\]
Then $C^+ \in |\pi^*\sO_{\PN_2}(7)-2l_2-2l_3-3l_4-3l_5-3l_6|$ is a smooth curve of degree $7$ and genus $4$ intrsecting each $R_i$ in $4$ points. The system $|{-}4K_S-C^+|$ is base point free and $|{-}3K_S-C^+|$ onedimensional. Define $X^+=\Bl_{C^+}(\PN_3)$ as usually.

\vspace{0.2cm}

\noindent {\bf No.29:} Exists, the construction is as above: by \eqref{degpsi} the exceptional locus of $\psi$ should have degree $3$, hence take $3$ disjoint lines $R_1, R_2$ and $R_3$ in the cubic $S \in |3H^+|$:
 \[R_1 \in |\pi^*\sO_{\PN_2}(1)-l_1-l_2|, \quad R_2 \in |\pi^*\sO_{\PN_2}(1)-l_1-l_3|, \quad R_3 \in
 |\pi^*\sO_{\PN_2}(1)-l_2-l_3|.\]
Then $C^+ \in |{-}3K_S-R_1-R_2-R_3|$ general is a smooth elliptic curve of degree $6$ intersecting each $R_i$ in $4$ points. The blowup $X^+ = \Bl_{C^+}(Y^+)$ has all desired properties.
\qed     


\section{Conic bundles} \label{secconic}
\setcounter{lemma}{0}

\begin{setup} {\rm In this section $\phi: X \to Y = \bP_2$ denotes a conic bundle with $\rho(X) = 2$. 
As always we assume $-K_X$ big and nef but not ample
and that the anticanonical morphism is small; moreover $-K_X$ is spanned.
The discriminant locus is denoted by $\Delta$. Set 
$$\tau = \deg \Delta.$$
We introduce the rank 3-bundle
$$ \sE = \phi_*(-K_X).$$ 
By \cite{JPR} $\sE$ is spanned, since $\psi$ is not divisorial
(compare the proof of Proposition~3.2 in \cite{JPR}). Thus we obtain an embedding 
$$ X \subset  \bP(\sE)$$
such that $-K_X = \zeta \vert X.$ 
The divisor $X \subset \bP(\sE)$ is of the form
$$ [X] = 2 \zeta + \pi^*(\sO(\lambda)) $$
with some integer $\lambda.$ Then the adjunction formula yields
$$ \lambda = 3 - c_1.$$  
Here we use the shorthand $c_i = c_i(\sE).$ 
Since
$$ H^q(\bP(\sE),- \zeta - \pi^*(\sO(\lambda))) = 0$$
for $q = 0,1,$ every section in $H^0(-K_X)$ uniquely lifts to a section of $\zeta.$ 
Thus $\vert \zeta \vert $ defines via Stein factorisation a  map $\hat \psi: \bP(\sE) \to \bP'  $ extending $\psi$  and in total
a map $\sigma \circ \hat \psi: \bP(\sE) \to \hat W \subset \bP_{g+1}.$ \\
Now we consider the flopping diagram \ref{flopdiag}. 
{\it Since the case $\dim Y^+ = 1 $ is already settled by sect. 3, we will always assume that 
$$ \dim Y^+ \geq 2,$$
so that $Y^+= \bP_2$ (and $\phi^+$ is a conic bundle) or $\dim Y^+ = 3.$} \\
As usual, we let $L = \phi^*\sO_{\PN_1}(1)$ and $L^+ $ be the pull-back
to $X^+$ of the ample generator $H^+$
on $Y^+$. The ``strict transform'' of $L$ in $X^+$ is denoted $\tilde L $ and 
similarly the strict transform of $L^+$ in $X$ is denoted $\tilde L^+.$ \\
In the case $\dim Y^+ = 3$ we denote the excptional divisor of $\phi^+$ by
$E^+$ and its strict transform in $X$ by $\tilde{E}^+$. If $E^+$ contracts to a smooth curve $C^+$, we let $g = g(C^+)$
and $d = H^+\cdot C^+$ the genus and degree of $C^+$. The index of $Y^+$ will be $r$ if $Y^+$ is Gorenstein; the only non-Gorenstein case occurs when $E^+ = \bP_2$ with normal bundle $\sO(-2).$
Then $-2K_{Y^+} $ is Cartier and we define $r$ by
$$ (\phi^+)^*(-2K_{Y^+}) = rL^+.$$ 

}
\end{setup}

\begin{proposition} \label{p1}
If $\phi$ is a $\PN_1$-bundle, then $r_X = 2$.
\end{proposition}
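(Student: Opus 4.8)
The plan is to use the projective-bundle structure. Since $\phi$ is a $\PN_1$-bundle, $X=\PN(\sF)$ for a rank $2$ bundle $\sF$ on $\bP_2$. Writing $\pi\colon X\to\bP_2$, $\xi=\sO_{\PN(\sF)}(1)$ and $c_1=c_1(\sF)$, the relative canonical bundle formula together with $-K_{\bP_2}=\sO(3)$ give
$$-K_X = 2\xi + \pi^*\sO(3-c_1).$$
Thus $-K_X$ is divisible by $2$ in $\Pic(X)$ precisely when $c_1$ is odd, in which case $r_X\ge 2$; the whole content of the proposition is therefore the parity statement ``$c_1$ is odd'', after which the value $r_X=2$ (rather than $3$ or $4$) will come for free from $X$ not being Fano. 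The argument will be organised around exhibiting a line over which $\sF$ is maximally unbalanced.

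First I would restrict to lines. For $L\subset\bP_2$ a line put $S_L:=\phi^{-1}(L)=\PN(\sF|_L)$, a Hirzebruch surface $\mathbb F_{n_L}$, where $\sF|_L\cong\sO_L(a)\oplus\sO_L(b)$ with $n_L=a-b\ge 0$; let $C_0$ be the negative section and $f$ a ruling. A direct computation (using $\xi|_{S_L}^2=\xi^2\cdot\pi^*\sO(1)=c_1$) gives $-K_X|_{S_L}=2C_0+(n_L+3)f$, the $c_1$ cancelling. Nefness of $-K_X$ then forces $n_L\le 3$ for every line. If some line has $n_L=3$ we are done: there $-K_X|_{S_L}=2(C_0+3f)$ vanishes on $C_0$, so $\psi$ contracts $C_0$, and
$$\xi\cdot C_0 = C_0^2 + \tfrac{c_1+3}{2} = \tfrac{c_1-3}{2},$$
which, being an intersection number, is an integer — hence $c_1$ is odd.

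So the crux is to rule out $n_L\le 2$ for \emph{every} line; I would do this by contradiction. If $n_L\le 2$ for all $L$, then the twist $\sF(k)$ with $c_1(\sF(k))=2$ has $\sF(k)|_L$ nef on every line, and a standard degeneration argument — degenerate a curve $B\subset\bP_2$ of degree $e$ to a union of $e$ lines and add up the degrees of the components of a destabilizing subsheaf — shows that $\sF(k)$ is then nef on all curves, i.e.\ the class $\xi+k\,\pi^*\sO(1)$ is nef. Writing $-K_X=2\bigl(\xi+k\,\pi^*\sO(1)\bigr)+\pi^*\sO(1)$ and noting that $\pi^*\sO(1)$ is strictly positive on every ray of $\overline{\mathrm{NE}}(X)$ other than the ruling ray $[f]$, while $-K_X\cdot f=2>0$, Kleiman's criterion makes $-K_X$ ample, contradicting the almost-Fano hypothesis. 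Hence a line with $n_L=3$ exists and, by the previous step, $c_1$ is odd. This positivity step — controlling $-K_X$ over curves of degree $\ge 2$ in $\bP_2$ — is the one genuinely non-formal point; the rest is bookkeeping.

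Finally, with $r_X\ge 2$ established, I would exclude $r_X\ge 3$. If $r_X=4$ then $X'=\bP_3$ (the classification of indices recalled just before \ref{highindex}), so $\psi$ is an isomorphism and $X=\bP_3$ is Fano — excluded. If $r_X=3$ then by \ref{highindex} $X=\PN(\sO_{\bP_1}\oplus\sO_{\bP_1}(1)^{\oplus 2})$ over $\bP_1$, whose only extremal contractions are a $\bP_2$-bundle over $\bP_1$ and a small birational morphism, so it admits no $\PN_1$-bundle structure over $\bP_2$ — excluded. Therefore $r_X=2$. (In particular, combined with the running assumption $r_X=1$, this rules out $\phi$ being a $\PN_1$-bundle, which is how the proposition is used in the sequel.)
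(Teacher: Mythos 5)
Your reduction to the parity of $c_1(\sF)$, the computation $-K_X|_{S_L}=2C_0+(n_L+3)f$, the bound $n_L\le 3$, the deduction ``$n_L=3$ for some line $\Rightarrow c_1$ odd'', and the final exclusion of $r_X\ge 3$ are all correct. The gap is the step handling the remaining case ``$n_L\le 2$ for every line'': the implication ``$\sF(k)$ nef on every line $\Rightarrow\sF(k)$ nef'' is false, and the degeneration argument you invoke cannot prove it, because semicontinuity runs the wrong way. Degenerating a degree-$e$ curve to a union of $e$ lines only bounds the destabilizing sub-line-bundles of the restriction to the \emph{generic} curve of degree $e$ (the locus of curves carrying a subsheaf of degree $\ge m$ is closed); it says nothing about a \emph{special} irreducible member, and the curve you must control, namely $C=\phi(l_\psi)$ for a flopping curve $l_\psi$, is exactly such a special ``jumping'' curve. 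Indeed $\bigl(\xi+k\,\pi^*\sO(1)\bigr)\cdot l_\psi=-\tfrac{d}{2}<0$ with $d=\deg C\ge 2$, so $\sF(k)$ is genuinely \emph{not} nef in the situation you are trying to rule out, and no argument can upgrade nefness on lines to nefness on all curves here. For a concrete counterexample to the general principle: let $\sF_0$ be a general stable rank-$2$ bundle on $\PN_2$ with $c_1=0$, $c_2=4$; every jumping line then has splitting type $(1,-1)$, so $\sF_0(1)$ restricts to $\sO(1)^{\oplus 2}$ or $\sO(2)\oplus\sO$ on every line and is nef on all lines, yet $\xi^3=c_1^2-c_2=4-5=-1<0$ on $\PN(\sF_0(1))$, so $\sF_0(1)$ is not nef.

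The paper gets the missing positivity from a different source, and you will need it (or an equivalent) to close the argument: since $\psi$ is small, $\phi_*(-K_X)=S^2(\sF(1))$ must be globally generated --- otherwise $\psi$ would be divisorial by Proposition~3.2 of \cite{JPR} --- and global generation implies $\sF(1)$ is nef on \emph{all} curves. Then on the normalization of $C=\phi(l_\psi)$ one has $\sF|_C=\sO(a)\oplus\sO(-a)$ with $2a=3d$ from $-K_X\cdot l_\psi=0$, while nefness of $\sF(1)$ forces $a\le d$; this is the contradiction. In short, the restriction-to-lines data alone cannot exclude your case (b); the global generation of $\phi_*(-K_X)$ is the essential extra input.
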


\begin{proof}
We write $X = \PN(\sF)$ with $\eta$ the tautological line bundle and normalize $\sF$ such that $c_1(\sF) = 0, -1$. If $c_1(\sF) = -1$, then $-K_X = 2\eta + 4L$, i.e. $r_X = 2$. We may hence assume $c_1(\sF) = 0$.

Consider a curve $l_{\psi}$ contracted by $\psi$ and let $C = \phi(l_{\psi})$. We may assume that $C$ is smooth (otherwise normalize). Write
 \[\sF|_C = \sO_{\PN_1}(a) \oplus \sO_{\PN_1}(-a)\]
and set $e = 2a$. Since $C_0 := l_{\psi}$ is contractible in $\PN(\sF_C)$, we have $C_0^2 = -e$. Since $-K_X = 2\eta +3L$ and since $\eta|_{\PN(\sF_C)} = C_0+af$ where $f=l_{\varphi}$ is a ruling line, we obtain
 \[0 = -K_X \cdot l_{\psi} = (2C_0+(2a+3d)f)\cdot C_0 = -e+3d,\]
where $d = \deg(C)$ is the degree of $C$.

On the other hand, $\phi_*(-K_X) = S^2(\sF(1))$ not globally generated implies $\psi$ divisorial by \cite{JPR}, Proposition~3.2. (compare the proof of 3.2., in particular p. 588. There we do not assume $\psi$ divisorial but show it.). Hence $\sF(1)$ is nef, which gives $a \le 1$. Then $a=1$ and $e=2=3d$, which is impossible.
\end{proof}

\begin{proposition}  Assume $\Delta\not= \emptyset$ and write
$$ \tilde L = \alpha^+ (-K_{X^+}) + \beta^+ L^+  \eqno (*)$$
and 
$$ \tilde L^+ = \alpha (-K_X) + \beta L . \eqno (**)$$
with $\alpha^+,\beta^+, \alpha, \beta \in \bQ.$  \\
Then ${\rm Pic}(X) = \bZ (-K_X) + \bZ L,$ hence 
$\alpha, \beta \in \bZ,$ and one of the following cases occur.
$$\alpha = 2\alpha^+ \  {\rm and} \  \beta^+ = {{-1} \over {2}}, \beta = -2; \leqno (1)$$ 
\item $$ \alpha^+ = \alpha \  {\rm and} \  \beta^+ = \beta = -1. \leqno (2)$$ 
\end{proposition}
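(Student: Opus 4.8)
\emph{Strategy.} The plan is to first determine $\Pic(X)$ exactly, and then to read off all the numerical relations from the flop.

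\emph{The Picard lattice of $X$.} Since $\Delta\ne\emptyset$, the fibre of $\phi$ over a general point of $\Delta$ is a line pair $l_1+l_2$ with $-K_X\cdot l_1=-K_X\cdot l_2=1$; I would fix $l:=l_1$, so that $-K_X\cdot l=1$ and $L\cdot l=0$. As $-K_X$ is big we have $(-K_X)^3>0$ while $L^3=0$, so $-K_X$ and $L$ are $\bQ$--independent and thus a $\bQ$--basis of $\Pic(X)_\bQ$. Given $D\in\Pic(X)$, write $D=a(-K_X)+bL$ with $a,b\in\bQ$; intersecting with $l$ gives $a=D\cdot l\in\bZ$, so $bL=D-a(-K_X)\in\Pic(X)$, and primitivity of $L$ (it is the pull-back of a primitive ample class along a morphism with connected fibres) forces $b\in\bZ$. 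Hence $\Pic(X)=\bZ(-K_X)\oplus\bZ L$, and applying this to $\tilde L^+\in\Pic(X)$ gives $\alpha,\beta\in\bZ$.

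\emph{Flop relations.} The flop $\chi$ is an isomorphism in codimension one, so strict transform induces an isomorphism $\Pic(X)_\bQ\cong\Pic(X^+)_\bQ$ carrying $-K_X$ to $-K_{X^+}$, $L$ to $\tilde L$ and $\tilde L^+$ to $L^+$. Applying this to $(**)$ and then substituting $(*)$ yields $L^+=(\alpha+\beta\alpha^+)(-K_{X^+})+\beta\beta^+L^+$ in $\Pic(X^+)_\bQ$; comparing coefficients in the basis $\{-K_{X^+},L^+\}$ gives $\beta\beta^+=1$ and $\alpha+\beta\alpha^+=0$. Thus $\beta=-1$ forces $\beta^+=-1$ and $\alpha^+=\alpha$ (case (2)), while $\beta=-2$ forces $\beta^+=-\tfrac12$ and $\alpha=2\alpha^+$ (case (1)); so everything reduces to showing $\beta\in\{-1,-2\}$. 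For the sign: the $\psi$--exceptional curves $l_\psi$ span the extremal ray complementary to $\phi$, so $-K_X\cdot l_\psi=0$ and $L\cdot l_\psi>0$; by the same reasoning $L^+$ is $\psi^+$--ample, hence its strict transform $\tilde L^+$ is $\psi$--negative, and $0>\tilde L^+\cdot l_\psi=\beta\,(L\cdot l_\psi)$ gives $\beta<0$.

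\emph{Bounding $\beta$.} Since $\dim Y^+=1$ is excluded, I would split into two cases. If $\dim Y^+=2$, then $\phi^+$ is a conic bundle which, as $r_{X^+}=r_X=1$, is not a $\PN_1$--bundle by Proposition~\ref{p1}; so $\Delta^+\ne\emptyset$, and the argument above applied to $X^+$ gives $\Pic(X^+)=\bZ(-K_{X^+})\oplus\bZ L^+$, hence $\beta^+\in\bZ$, so $\beta=\beta^+=-1$. If $\dim Y^+=3$, then $\phi^+$ is a divisorial Mori contraction; a curve $l^+$ in one of its fibres has $L^+\cdot l^+=0$ and $-K_{X^+}\cdot l^+\in\{1,2\}$ by \cite{Mori}, so intersecting $(*)$ with $l^+$ gives $\alpha^+\in\tfrac12\bZ$; then $\beta^+L^+=\tilde L-\alpha^+(-K_{X^+})\in\tfrac12\Pic(X^+)$, and primitivity of $L^+$ forces $\beta^+\in\tfrac12\bZ$. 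In either case $\beta\beta^+=1$ with $\beta\in\bZ_{<0}$ and $\beta^+\in\tfrac12\bZ$ leaves only $\beta\in\{-1,-2\}$, which together with the flop relations gives exactly the two asserted possibilities.

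\emph{Main obstacle.} The genuinely delicate point is the $\dim Y^+=3$ case: one needs the length bound $-K_{X^+}\cdot l^+\le 2$ on $\phi^+$--contracted curves (this is where the classification of divisorial extremal contractions of smooth threefolds really enters) and the primitivity of $L^+$ in $\Pic(X^+)$, which requires some care in the non-Gorenstein case where $\phi^+$ contracts a plane with normal bundle $\sO(-2)$.
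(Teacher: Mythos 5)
Your proposal is correct and follows essentially the same route as the paper: integrality of $\alpha,\beta$ from intersecting with a component of a reducible conic plus primitivity of $L$, half-integrality of $\alpha^+,\beta^+$ from intersecting with an extremal rational curve of $\phi^+$ (of length $1$ or $2$), and the flop relations $\beta\beta^+=1$, $\alpha+\beta\alpha^+=0$. You merely make explicit two points the paper leaves implicit --- the sign $\beta<0$ via $\psi$-negativity of $\tilde L^+$, and the case split on $\dim Y^+$ --- which is a welcome clarification but not a different argument.
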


\proof First note that intersecting with an irreducible component of a
reducible conic gives $\alpha, \beta \in \bZ$, and intersectung with
an extremal rational curve of $\phi^+$ gives $2\alpha^+, 2\beta^+ \in
\bZ$. Moreover, $\alpha^+, \alpha \ge 0$. Putting now equation (*) into (**) and having in mind $- \tilde K_X = -K_{\tilde X}$ yields
$$  \alpha + \beta \alpha^+ = 0$$
and
$$ \beta^+ \beta = 1.$$
By symmetry we also have
$$ \alpha^+ + \beta^+ \alpha = 0.$$
Now a trivial calculation gives (1) and (2).
\qed

\vskip .4cm
Of course (*) can be rewritten as
$$ L = \alpha^+ (-K_X) + \beta^+ \tilde L^+$$
and analogously for (**).
\vskip .2cm \noindent
We shall also consider a general fiber $l_{\phi}$ and also a general fiber $l_{\phi^+}$ of $\phi+$ if $\dim Y^+ = 2.$ 
If $\phi^+$ is birational, we let $l_{\phi^+}$ be a minimal rational curve contracted by $\phi^+.$ 
The intersection numbers with $-K_X$ resp. $-K_{X^+}$ are either $1$ or $2$;
for $l_{\phi}$ the number is $2.$ The general $l_{\phi}$ will not meet the exceptional
locus of $\psi;$ thus it lies naturally in $X^+,$ and we denote the completed family in $X^+$ by $l^+_{\phi}.$
The same for $l_{\phi^+}$ if $\dim Y^+ = 2; $ here the notation is
$\tilde l_{\phi^+}.$ 

\

We start with the case that $\phi^+: X^+ \to Y^+ =
\bP_2$ is a conic bundle.

\begin{theorem} \label{conicconic}
If $\phi$ is a proper conic bundle, then the case $\dim Y^+ = 2$ is impossible.
\end{theorem}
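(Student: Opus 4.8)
The plan is to use the numerical machinery already set up for the case $\dim Y^+ = 2$: both $\phi$ and $\phi^+$ are proper conic bundles over $\PN_2$, so Proposition~\ref{delpezzoconicnum} applies, with the roles of $X$ and $X^+$ symmetric. Writing $\tau$ and $\tau^+$ for the degrees of the discriminant loci of $\phi$ and $\phi^+$, I would first observe that since $\phi^+$ is a proper conic bundle there is a rational curve $l^+$ with $-K_{X^+}\cdot l^+ = 1$ and $L^+\cdot l^+ = 0$, and symmetrically for $\phi$, so by Proposition~\ref{alphabeta}(2) (adapted to $\dim Y^+ = 2$ via Proposition~\ref{delpezzoconicnum}(1)) we are in the case $\beta = \beta^+ = -1$, $\alpha = \alpha^+ \in \bN$. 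Thus $-K_X = \tilde L^+ + L$, i.e.\ $\tilde L^+ = \alpha(-K_X) + \beta F$ with $\alpha = 1$ if $\lambda \ge 1$.

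Next I would extract the key Diophantine relations. From Proposition~\ref{delpezzoconicnum}(3) applied to $\phi^+$ we get $\alpha^2(-K_X)^3 = 2\alpha(12-\tau^+) - 2$, and by symmetry $\alpha^2(-K_X)^3 = 2\alpha(12-\tau) - 2$ (using $(-K_X)^3 = (-K_{X^+})^3$ and $\alpha = \alpha^+$), which forces $\tau = \tau^+$. Now I would run the same argument as in the proof of Proposition~\ref{delpezzoconic}, part (1): since $-K_X = \tilde L^+ + L$ with $h^0(-K_X - L) = 3$ and $|\tilde L^+| = |{-}K_X - L|$ has no reducible members (as in Proposition~\ref{alphabeta} — a reducible member would contain a $\phi$-fiber-component forcing $F$ divisible), we get $\sE := \phi_*(-K_X) = \sO(1)^{\oplus 3}\oplus \sO^{\oplus b}$ with no $\sO(-1)$ summand, hence $K_F^2 = b+2$ and $(-K_X)^3 = 2b+6$. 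But this is exactly the numerical setup of Proposition~\ref{delpezzoconic}, which already produced the finite list $(({-}K_X)^3, K_F^2, \tau^+) \in \{(8,3,7),(10,4,6),(12,5,5),(14,6,4)\}$ — except that now, by the symmetry argument above, we also need $\tau = \tau^+$, and moreover $X$ itself must admit the conic bundle structure $\phi$ with the same invariants.

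The heart of the argument is to show that the constraint $\tau = \tau^+$, together with the fact that $X$ (not just $X^+$) must be a conic bundle over $\PN_2$, is incompatible with the four surviving cases. For this I would exploit the analogue of equation $(*)$ in the proof of Proposition~\ref{delpezzoconic}: writing $\sE^+ = \phi^+_*(-K_{X^+})$ and $X^+ \in |2\zeta + \pi^*\sO(3-c_1^+)|$ with $c_i^+ = c_i(\sE^+)$, we have $c_1^+ = 9-\tau^+$ and $(-K_X)^3 = (c_1^+)^2 - 2c_2^+ + 3c_1^+$. Running the same computation from the $X$ side gives $c_1 = c_1(\sE) = 9-\tau$ and the same cubic formula; but from the explicit form $\sE = \sO(1)^{\oplus 3}\oplus\sO^{\oplus b}$ we read off $c_1(\sE) = 3$, hence $\tau = 6$, which then forces $\tau^+ = 6$, i.e.\ the case $(10,4,6)$. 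For that remaining case I would compute $c_2^+$ from $(\tau^+)^2 - 19\tau^+ + 86 = 2c_2^+$ (giving $c_2^+ = 4$), pass to the rank-two quotient $\sF^+$ with $0 \to \sO \to \sE^+ \to \sF^+ \to 0$ as in Proposition~\ref{delpezzoconic}, and derive a contradiction from $h^0(\sF^+(-1)) = h^0(\sE^+(-1)) = 2$ being incompatible with the Chern classes $c_1(\sF^+) = -3$, $c_2(\sF^+)$ of a spanned bundle on $\PN_2$ — or, more directly, by noting that the symmetric roles of $\sE$ and $\sE^+$ would force $\sE^+ = \sO(1)^{\oplus 3}\oplus\sO^{\oplus 2}$ as well, whence $c_2^+ = 3 \ne 4$, the desired contradiction.

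The main obstacle I anticipate is making the symmetry rigorous: a priori $\phi^+$ being a conic bundle does not immediately tell us the structure of $\sE = \phi_*(-K_X)$ matches that of $\sE^+ = \phi^+_*(-K_{X^+})$, because the decomposition type of these bundles could differ even when the Chern classes agree. I would handle this by arguing directly on Chern classes — $c_1(\sE)$ and $c_1(\sE^+)$ are both determined by the adjunction relation $\lambda = 3 - c_1$ and the equality $(-K_X)^3 = (-K_{X^+})^3$ forces, via the cubic $\zeta^3$-formula together with Proposition~\ref{delpezzoconicnum}(3), that $c_1(\sE) = c_1(\sE^+)$ and $c_2(\sE) = c_2(\sE^+)$ — and then the constraint $c_1(\sE) = 3$ coming from the explicit splitting on the $X$-side (valid since $\alpha = 1$, $\beta = -1$ and $\sE$ has three $\sO(1)$-summands and no negative summand) pins $\tau$ and hence everything else, reducing to the single numerically-surviving case which is then excluded by the $\sF^+$ computation. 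If the $c_2$ comparison turns out to be subtler than expected, the fallback is to check all four surviving triples case-by-case: in each, realize $\sE^+$ explicitly (as is done for cases $\tau^+ = 7,6$ in Proposition~\ref{delpezzoconic}) and verify that the would-be $X$ fails to be a conic bundle over $\PN_2$ — e.g.\ because $h^0(X, \tilde L^+) = h^0(X^+, L^+)$ does not give a base-point-free pencil-free linear system of the right dimension, or because the resulting $\sE = \phi_*(-K_X)$ has a negative summand contradicting spannedness.
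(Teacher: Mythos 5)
There is a genuine gap, and it comes from transplanting machinery from the wrong section. Propositions~\ref{delpezzoconicnum} and \ref{delpezzoconic} concern the mixed situation where $\phi\colon X \to \bP_1$ is a \emph{del Pezzo fibration} and only $\phi^+$ is a conic bundle; in Theorem~\ref{conicconic} both $\phi$ and $\phi^+$ are conic bundles over $\bP_2$, and the formulas do not carry over. Concretely, \ref{delpezzoconicnum}(3) is derived from $K_X\cdot F^2 = 0$ for the del Pezzo fiber $F$, whereas here the analogous class is $L^2 = [l_{\phi}]$, a conic, so $K_X\cdot L^2 = -2$ and the relation becomes $\alpha^2({-}K_X)^3 = 2\alpha(12-\tau^+)$ \emph{without} the $-2$. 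Your key Diophantine equation is therefore false in this setting: the configuration $\alpha=1$, $\tau=\tau^+=6$, $({-}K_X)^3=12$ that the paper must rule out satisfies $12 = 2(12-6)$ but not your $12 = 2(12-6)-2$. Worse, the middle of your argument — ``$h^0({-}K_X-L)=3$ and no reducible members, hence $\sE = \phi_*({-}K_X) = \sO(1)^{\oplus 3}\oplus\sO^{\oplus b}$, $K_F^2 = b+2$, $({-}K_X)^3 = 2b+6$'' — is meaningless here: $\phi_*({-}K_X)$ is a rank-$3$ bundle on $\bP_2$ (Setup~4.1), not a split bundle on $\bP_1$, and there is no del Pezzo fiber $F$ or invariant $K_F^2$. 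Consequently the finite list $(8,3,7),(10,4,6),\dots$ you import from Proposition~\ref{delpezzoconic}, the deduction $c_1(\sE)=3$, and the final $\sF^+$ computation all rest on objects that do not exist in this situation. Your preliminary reduction to $\beta=\beta^+=-1$, $\alpha=\alpha^+\in\bN$ is correct, and $\tau=\tau^+$ does in fact hold (for the corrected reason), but no contradiction is ever validly reached.

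For comparison, the paper's proof is short and purely numerical, staying entirely within the conic-bundle setting: intersecting $\alpha^+({-}K_X) = L + \tilde L^+$ with general conics gives $L^2\cdot\tilde L^+ = L\cdot(\tilde L^+)^2 = 2\alpha^+$; cubing yields $(\tilde L^+)^3 = (\alpha^+)^3({-}K_X)^3 - 12\alpha^+$, and since two general members of $|\tilde L^+|$ intersect in an effective combination of $\psi$-exceptional curves on which $\tilde L^+$ is negative, $(\tilde L^+)^3 < 0$, whence $(\alpha^+)^2({-}K_X)^3 < 12$. This leaves $\alpha^+=1$, $({-}K_X)^3\le 11$ or $\alpha^+=2$, $({-}K_X)^3=2$; expanding $-K_X\cdot L^2 = 2$ then forces $\tau=\tau^+=6$ (resp.\ $9$) and $({-}K_X)^3 = 12$ (resp.\ $3$), contradicting the bound. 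If you want to salvage your approach, you must rederive every identity from $K_X\cdot L^2=-2$, $K_X^2\cdot L = 12-\tau$, $L^3=0$ rather than quoting the del Pezzo fibration results.
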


\begin{proof}
Intersect  
$$ \alpha^+ (-K_X) = L + \tilde L^+ \eqno (*)$$  
with a conic $l_{\phi} $ to obtain
$$ \tilde L^+ \cdot l_{\phi} = 2 \alpha^+.$$ 
Hence
$$ L^2 \cdot \tilde L^+ = 2 \alpha^+.$$ 
Analogously
$$ \tilde L \cdot (L^+)^2 = 2 \alpha^+. $$
Cube equation (*), so that 
$$ (\tilde L^+)^3 = (\alpha^+)^3 (-K_X)^3 - 12 \alpha^+. \eqno (**)$$ 
Now observe that  
$$(\tilde L^+)^3 < 0.$$
This can be seen either by a spectral sequence argument 
plus Riemann-Roch, computing $\chi(\tilde L^+),$ or as follows. Take two general elements  
$$S_i \in \vert \tilde L^+ \vert. $$ 
Then 
$$ S_1 \cdot S_2 = \sum a_i l_i$$
where $a_i > 0$ and the $l_i$ are contracted by $\psi.$ Since $\tilde L^+ \cdot l_i < 0,$ 
we obtain $( \tilde L^+)^3 < 0.$ \\ 
Thus (**) yields
$$ (\alpha^+)^2 (-K_X)^3 < 12, \eqno (***)$$
hence either $\alpha^+ = 1$ and $({-}K_X)^3 \le 11$ or $\alpha^+ = 2$ and
$({-}K_X)^3 = 2$. 

\vspace{0.2cm}

\noindent Assume $\alpha^+ = \alpha = 1$. Then $-K_X = L + \tilde L^+$ and
hence 
 \[-K_X\cdot L^2 = ({-}K_X)^2\cdot L - ({-}K_X)\cdot L\cdot \tilde L^+
 = ({-}K_X)^2\cdot L - L^2\cdot \tilde L^+ - L \cdot (\tilde L^+)^2.\]
We obtain $2 = (12-\tau) -4$, hence $\tau = 6$. Analogously we prove
$\tau^+ = 6$, with $\tau^+$ the degree of the discriminant locus of $\phi^+$. Then $K_X^2\cdot (\tilde L^+)^2 = ({-}K_X)^3 - K_X^2 \cdot L$ yields
$({-}K_X)^3 = 12$, contradicting (***). 

\noindent If $\alpha^+ = \alpha = 2$, the analogous computation gives $\tau =
\tau^+ = 9$ and  $({-}K_X)^3 = 3$, which is again impossible.
\end{proof}

\

{\bf From now on we assume $\tau\not= 0$ and $\phi^+: X^+ \to Y^+$ is birational.}

\begin{lemma}
$\phi^+$ cannot be the blow-up of a smooth point. 
\end{lemma}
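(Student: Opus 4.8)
The plan is to argue by contradiction. Suppose $\phi^+$ is the blow-up of a smooth point $p$ of $Y^+$; then $Y^+$ is a smooth Fano threefold with $\rho(Y^+)=1$, $E^+\simeq\bP_2$ with $N_{E^+/X^+}=\sO(-1)$, and
$$-K_{X^+}=(\phi^+)^*({-}K_{Y^+})-2E^+=rL^+-2E^+,$$
where $r\in\{1,2,3,4\}$ is the index of $Y^+$. From $(E^+)^3=1$ and $(L^+)^2\cdot E^+=L^+\cdot(E^+)^2=0$ one obtains, exactly as in Proposition~\ref{blowupsmooth}, that $K_{X^+}^2\cdot E^+=4$, $K_{X^+}\cdot(E^+)^2=2$, and also $({-}K_{X^+})^3=({-}K_{Y^+})^3-8$.

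First I would pin down the combinatorics. Write $\tilde E^+=a({-}K_X)+bL$ with $a,b\in\bZ$ (legitimate since $\Pic(X)=\bZ({-}K_X)\oplus\bZ L$). Taking strict transforms in $-K_{X^+}=rL^+-2E^+$ gives $-K_X=r\tilde L^+-2\tilde E^+$, and comparing with $\tilde L^+=\alpha({-}K_X)+\beta L$ yields $r\alpha-2a=1$ and $r\beta=2b$. Of the two possibilities $(\beta,\beta^+)=(-1,-1)$ and $(-2,-\frac12)$ of the Proposition preceding Theorem~\ref{conicconic}, the first forces $b=-r/2$ with $r$ even, hence $r\alpha$ even, contradicting $r\alpha=2a+1$; so $\beta=-2$, $b=-r$, and $r\alpha=2a+1$ is odd, whence $r$ is odd, $r\in\{1,3\}$. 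Moreover $a\ge1$, since $a({-}K_X)-rL$ has no sections for $a\le0$ while $\tilde E^+$ is effective.

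Next I would feed this into the flop invariants of Proposition~\ref{alphabeta}(5), $K_X^2\cdot\tilde E^+=K_{X^+}^2\cdot E^+=4$ and $K_X\cdot(\tilde E^+)^2=K_{X^+}\cdot(E^+)^2=2$, together with the conic-bundle intersection numbers $({-}K_X)^2\cdot L=12-\tau$, $({-}K_X)\cdot L^2=2$, $L^3=0$. With $\tilde E^+=a({-}K_X)-rL$ these become
$$a({-}K_X)^3-r(12-\tau)=4,\qquad a^2({-}K_X)^3-2ar(12-\tau)+2r^2=-2,$$
and eliminating $a^2({-}K_X)^3$ gives the single relation $ar(12-\tau)=4a+2r^2+2$. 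Running through $r\in\{1,3\}$ and $a\ge1$, and imposing that $({-}K_X)^3$ is even with $({-}K_X)^3\le22$ and $1\le\tau\le11$, I expect all solutions to be excluded except $r=1$, $a=1$, $\tau=4$, $({-}K_X)^3=12$ (for $r=1$ the cases $a\in\{2,4\}$ give $({-}K_X)^3=5$ or $9/4$; for $r=3$ one only gets $({-}K_X)^3=28$, $13/4$ or $1$).

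Finally, in the surviving case $({-}K_{Y^+})^3=({-}K_X)^3+8=20$, so $Y^+$ would be a smooth Fano threefold of index $1$ and $\rho=1$ with $({-}K_{Y^+})^3=2\cdot11-2$, i.e. of genus $11$ — which does not occur in Iskovskikh's classification. This contradiction proves the lemma. I expect the main obstacle to be the bookkeeping in the Diophantine step (making sure each tuple is killed by the appropriate parity or size constraint); the crucial point is that the unique surviving numerical solution is ruled out not by an inequality but by the absence of a genus-$11$ prime Fano threefold.
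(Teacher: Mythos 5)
Your argument is correct, but it is a genuinely different proof from the one in the paper. The paper's proof is deformation-theoretic: a general line $l^+\subset E^+\simeq\bP_2$ avoids $\exc(\psi^+)$, its image $l'$ in $X'$ has normal bundle $\sO(-1)\oplus\sO(1)$, hence deforms into the Namikawa smoothing $\X'_t$ as a conic $l'_t$; since conics sweep out the smooth Fano $\X'_t$, their limits would have to sweep out $X'$, contradicting the fact that all deformations of $l^+$ stay inside the rigid divisor $E^+$. Your proof instead stays entirely inside the numerical framework of the section: you pin down $(\beta,\beta^+)=(-2,-\tfrac12)$ by the parity of $r\alpha-2a=1$ and $r\beta=2b$, feed $\tilde E^+=a(-K_X)-rL$ into the flop invariants $K_X^2\cdot\tilde E^+=4$, $K_X\cdot(\tilde E^+)^2=2$ together with $(-K_X)^2\cdot L=12-\tau$, $(-K_X)\cdot L^2=2$, and reduce to the Diophantine relation $ar(12-\tau)=4a+2r^2+2$; I checked the enumeration ($r\in\{1,3\}$, $a\mid 4$ resp.\ $a\mid 20$) and the unique integral, even, $\le 22$ solution is indeed $(r,a,\tau,(-K_X)^3)=(1,1,4,12)$, which is killed by the non-existence of a prime Fano threefold of genus $11$. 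The trade-off: the paper's argument is shorter, conceptual, and independent of Iskovskikh's classification (it needs only the existence of the smoothing), while yours is longer but elementary and self-checking, using only tools (Proposition preceding Theorem~\ref{conicconic}, Proposition~\ref{alphabeta}(5), the classification) that the surrounding text already relies on; in particular it is not circular, since the proposition excluding $\beta=-2$ comes \emph{after} this lemma and is not invoked. One stylistic remark: your justification that $a\ge 1$ is terse for $a<0$; intersecting $\tilde E^+$ with a general conic fiber $l_\phi$ gives $2a\ge 0$ immediately and is cleaner.
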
 

\begin{proof} Assume that $\phi^+$ is the blow-up of the smooth point $p$ and let $E^+$ be the exceptional divisor. 
Clearly $E^+$ cannot contain any curve $l_{\psi^+}$ so that the general line $l^+ := l_{\phi^+} \subset E^+$ does not meet the exceptional
set of $\psi^+.$ Let $l' = \psi^+(l^+).$ Let $\X' \to \Delta$ be a
smoothing of $X'$. Then
$$ N_{l'/X'} = \sO(-1) \oplus \sO(1)$$ and
$$ N_{l'/{\mathcal X'}} = \sO(-1) \oplus \sO(1) \oplus \sO.$$ 
Hence $l'$ moves to the smooth fibers $X'_t.$ Let $l'_t \subset X'_t$ be such a deformation. Then 
$$-K_{X'_t} \cdot l'_t = 2$$ so that
$l'_t$ is a conic in the smooth Fano threefold $X'_t.$ Thus the deformations of $l'_t$ inside $X_t'$ fill up $X'_t$ ($t \ne 0$). 
But then the deformations of $l'$ in $X'$ must fill up $X'$, which is absurd. 
\end{proof}

\begin{proposition} The case $\beta = -2$ is impossible. Moreover $\alpha^+ = \alpha \in \bN.$ 
\end{proposition}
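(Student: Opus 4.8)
The plan is to eliminate the exotic alternative $\beta=-2$ by a divisibility argument in $\Pic(X^+)$, and then to read off $\alpha^+=\alpha\in\bN$ from the surviving case (2) of the preceding proposition together with a trivial count of sections.

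First I would suppose $\beta=-2$. By the preceding proposition this forces $\beta^+=-\frac{1}{2}$ and $\alpha=2\alpha^+$, so on $X^+$ one has $\tilde L=\alpha^+(-K_{X^+})-\frac{1}{2}L^+$. Since we are in the case $\phi^+$ birational, choose a minimal rational curve $l_{\phi^+}$ contracted by $\phi^+$ — a fibre of $E^+\to C^+$ if $\phi^+$ contracts a divisor to a curve, a line in $E^+$ otherwise. By Mori's classification of divisorial contractions of a smooth threefold (the case $E^+\cong\bP_2$, $N_{E^+/X^+}=\sO(-1)$ being already ruled out by the previous lemma) we have $-K_{X^+}\cdot l_{\phi^+}=1$, whereas $L^+\cdot l_{\phi^+}=(\phi^+)^*H^+\cdot l_{\phi^+}=0$ because $l_{\phi^+}$ lies in a fibre of $\phi^+$. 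Intersecting the displayed identity with $l_{\phi^+}$ gives $\alpha^+=\tilde L\cdot l_{\phi^+}\in\bZ$; hence $\frac{1}{2}L^+=\alpha^+(-K_{X^+})-\tilde L$ already lies in $\Pic(X^+)$, i.e.\ $L^+$ is divisible by $2$ in $\Pic(X^+)$.

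The next step is to contradict this. As $L^+=(\phi^+)^*H^+$ with $H^+$ the ample generator of $\Pic(Y^+)\cong\bZ$, it cannot be $2$-divisible: if $\phi^+$ is the blow-up of a smooth curve then $Y^+$ is smooth, $\Pic(X^+)=\bZ L^+\oplus\bZ E^+$, and $L^+$ is primitive; in the remaining cases one pushes a relation $L^+=2M$ forward by $\phi^+$ to get $H^+=2\,\phi^+_{*}M$ in $\Cl(Y^+)$, which is impossible because $H^+$ is primitive in $\Cl(Y^+)$ (using the known local class groups of an ordinary double point, respectively a $\frac{1}{2}(1,1,1)$-point) — and in the one remaining possibility in which $H^+$ could be $2$-divisible in $\Cl(Y^+)$, because the prime exceptional divisor $E^+$ is not $2$-divisible in $\Pic(X^+)$ (which one sees by restricting to $E^+$, or again by pushing forward to $\Cl(Y^+)$). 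Thus $\beta=-2$ is impossible, so by the preceding proposition we are in case (2): $\alpha^+=\alpha$ and $\beta^+=\beta=-1$.

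It remains to see $\alpha\in\bN$. We already know $\alpha\ge0$; and $\alpha\neq0$, for otherwise $\tilde L=\beta^+L^+=-L^+$ would give $h^0(X^+,\tilde L)=h^0\bigl(X^+,-(\phi^+)^*H^+\bigr)=0$, contradicting $h^0(X^+,\tilde L)=h^0(X,L)=h^0(\bP_2,\sO(1))=3$ by the flop-invariance of $h^0$ of strict transforms. Hence $\alpha=\alpha^+\ge1$, as claimed. I expect the delicate point to be the third paragraph: making ``$L^+$ is not $2$-divisible'' uniform over the non-$\bQ$-factorial contractions (nodal $Y^+$) and the non-Gorenstein one ($E^+\cong\bP_2$, $N_{E^+/X^+}=\sO(-2)$), where one must argue with $\Cl(Y^+)$ instead of $\Pic(Y^+)$ and keep track of the $2$-divisibility of $E^+$.
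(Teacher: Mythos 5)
Your argument is correct and is essentially the paper's own proof: assuming $\beta=-2$, you produce (via Mori's classification and the exclusion of the smooth-point blow-up) a $\phi^+$-contracted curve with $-K_{X^+}\cdot C=1$ and $L^+\cdot C=0$, deduce $\alpha^+\in\bZ$ and hence the absurd $2$-divisibility of $L^+$, exactly as in the text. The only differences are cosmetic: you spell out why $L^+$ cannot be $2$-divisible (the paper just calls it absurd) and you rule out $\alpha=0$ by the section count $h^0(X^+,\tilde L)=h^0(X,L)=3$ rather than by intersecting with a component of a reducible conic.
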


\begin{proof}  
Suppose $\beta = -2$ so that $\beta^+ = -{{1} \over {2}}.$ 
First we claim that there cannot be a curve $C$  contracted by $\phi^+$ such that $-K_{X^+} \cdot C = 1.$ In fact, if such a curve exists,
then
$$ \tilde L \cdot C = \alpha^+ - {{1} \over {2}} L^+ \cdot C = \alpha^+,$$
hence $\alpha^+$ is an integer and therefore $L^+$ is divisible by $2$ which is absurd. 
Thus $\phi^+$ cannot be a proper conic bundle.
It cannot be a $\bP_1-$bundle either by assumption. So - recalling that we assume $\dim Y^+ \ne 1,$ the morphism
$\phi^+$ is birational and by Mori's classification the non-existence of a curve $C$ with $-K_{X^+} \cdot C = 1$  forces $\phi^+$
to be the blow-up of a smooth point in $\tilde Y$ which is excluded by the last lemma. \\
Thus $\beta = -1$ and therefore also $\beta^+ =  -1.$ Consider the decomposition $\alpha^+ (-K_X) = L + \tilde L^+ $ and intersect with the 
irreducible component $l$ of a reducibe conic: $\alpha^+ = L \cdot l + \tilde L^+ \cdot l \in \bN. $    
\end{proof} 

\begin{corollary} Suppose $\phi^+$ birational. Then
$\beta^+ = \beta = -1$ and $\alpha^+ = \alpha \in \bN.$ Moreover 
$$ \tilde{E}^+ = (r\alpha^+ - 1) (-K_X) - rL $$
unless $E^+ = \bP_2$ with normal bundle $\sO(-2).$ In that case
$$ \tilde{E}^+ = (r\alpha^+ - 2) (-K_X)  - rL.$$ 
\end{corollary}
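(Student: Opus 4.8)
The first assertion merely summarizes what precedes. By the preceding Proposition the case $\beta=-2$ is impossible, so of the two alternatives isolated earlier --- either $\alpha=2\alpha^+$, $\beta^+=-\tfrac{1}{2}$, $\beta=-2$, or $\alpha^+=\alpha$, $\beta^+=\beta=-1$ --- only the second survives, and the same Proposition records $\alpha^+=\alpha\in\bN$. Since $\Pic(X)=\bZ(-K_X)+\bZ L$, the relation $\tilde L^+=\alpha(-K_X)+\beta L$ now reads
$$\tilde L^+=\alpha^+(-K_X)-L,$$
which will be substituted at the end.

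For the formula for $\tilde E^+$ the plan is to read off the class of $E^+$ on $X^+$ from a canonical bundle formula and then transport it to $X$ through the flop. The map $\phi^+\colon X^+\to Y^+$ is a divisorial extremal contraction of the smooth threefold $X^+$ which, by the Lemma above, is not the blow-up of a smooth point; hence \cite{Mori} leaves only the following possibilities: $\phi^+$ is the blow-up of a smooth curve $C^+\subset Y^+$; or $\phi^+$ contracts $E^+$, a smooth quadric surface or a quadric cone, to a point of $Y^+$; or $E^+=\bP_2$ with $N_{E^+/X^+}=\sO(-2)$. In the first three cases $Y^+$ is Gorenstein and the discrepancy is $1$, i.e. $K_{X^+}=(\phi^+)^*K_{Y^+}+E^+$; together with $(\phi^+)^*(-K_{Y^+})=rL^+$, $r$ the index of $Y^+$, this gives $E^+=rL^+-(-K_{X^+})$. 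In the last case, which is precisely the non-Gorenstein one, the discrepancy is $\tfrac{1}{2}$, so $2K_{X^+}=(\phi^+)^*(2K_{Y^+})+E^+$, and using the definition $(\phi^+)^*(-2K_{Y^+})=rL^+$ from the Setup one gets $E^+=rL^+-2(-K_{X^+})$.

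Finally, the flop $\chi$ is an isomorphism in codimension one, so taking strict transforms yields an isomorphism $\Pic(X^+)\cong\Pic(X)$ carrying $-K_{X^+}\mapsto-K_X$ (this is $-\tilde K_X=-K_{X^+}$ from the Notation section), $L^+\mapsto\tilde L^+$, and $E^+\mapsto\tilde E^+$. Applying it to the two formulas for $E^+$ and inserting $\tilde L^+=\alpha^+(-K_X)-L$ gives
$$\tilde E^+=r\tilde L^+-(-K_X)=(r\alpha^+-1)(-K_X)-rL$$
in the first three cases, and $\tilde E^+=r\tilde L^+-2(-K_X)=(r\alpha^+-2)(-K_X)-rL$ when $E^+=\bP_2$ with normal bundle $\sO(-2)$, as claimed. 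The only step that is not purely formal is pinning down the discrepancy, and that reduces entirely to Mori's list \cite{Mori} plus the exclusion of the smooth-point blow-up proved in the previous Lemma; the canonical bundle formula and the behaviour of divisor classes under a flop --- both already available --- handle the rest.
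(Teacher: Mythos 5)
Your proposal is correct and is essentially the argument the paper intends: the paper states this corollary without proof as an immediate consequence of the preceding Proposition (which kills the $\beta=-2$ alternative) together with Mori's discrepancy formula $K_{X^+}=(\phi^+)^*K_{Y^+}+E^+$ (resp.\ $2K_{X^+}=(\phi^+)^*(2K_{Y^+})+E^+$ in the $\bP_2$, $\sO(-2)$ case), the definition of $r$ from the Setup, and the identification of Picard groups under the flop. Your write-up fills in exactly these steps, including the correct use of the previous Lemma to exclude the smooth-point blow-up from Mori's list.
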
 

\begin{lemma} Suppose $\phi^+$ birational. If $E^+ \ne \bP_2, $ then 
$$ ({{(r\alpha^+-1)^3} \over {6}} + {{(r\alpha^+-1)^2} \over {4}} + {{(r\alpha^+-1)} \over {12}})(-K_X)^3 
+(\tau - 12)({{(r\alpha^+-1)^2r} \over {2}}+ {{(r\alpha^+-1)r} \over {2}}) + $$
$$ + (r\alpha^+-1)(r^2+2) + {{r^2} \over {2}} - {{3} \over {2}}r + 1 \leq 1.$$
If $E^+ = \bP_2,$ then
$$ ({{(r\alpha^+-2)^3} \over {6}} + {{(r\alpha^+-2)^2} \over {4}} + {{(r\alpha^+-2)} \over {12}})(-K_X)^3 
+(\tau - 12)({{(r\alpha^+-2)^2r} \over {2}}+ {{(r\alpha^+-2)r} \over {2}}) + $$
$$ + (r\alpha^+-2)(r^2+2) + {{r^2} \over {2}} - {{3} \over {2}}r + 1 \leq 1.$$
\end{lemma}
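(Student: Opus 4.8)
\emph{Overview.} The plan is to show that the left‑hand side of the asserted inequality is exactly $\chi(X,\tilde E^+)$, and then to bound this Euler characteristic by $h^0(X,\tilde E^+)=1$ using the vanishing $h^2(X,\tilde E^+)=h^3(X,\tilde E^+)=0$; this yields $\chi(X,\tilde E^+)=1-h^1(X,\tilde E^+)\le 1$.

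\emph{Step 1 (Riemann--Roch).} By the Corollary above we may write $\tilde E^+=n(-K_X)-rL$, with $n=r\alpha^+-1$ if $E^+\ne\bP_2$ and $n=r\alpha^+-2$ if $E^+=\bP_2$. I would first assemble the intersection numbers on $X$: $L^3=0$ and $(-K_X)\cdot L^2=2$ (since $L=\phi^*\sO_{\bP_2}(1)$ and a general fibre of $\phi$ is a conic), $(-K_X)^2\cdot L=12-\tau$ (the conic--bundle identity already used in the proof of Theorem~\ref{conicconic}), $(-K_X)\cdot c_2(X)=24\chi(\sO_X)=24$, $\chi(\sO_X)=1$, and $c_2(X)\cdot L=\tau+6$. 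The last equality I would obtain by restricting $T_X$ to $S_0=\phi^{-1}(\ell)$ for a general line $\ell\subset\bP_2$: then $c_2(X)\cdot L=c_2(S_0)+(-K_{S_0})\cdot N_{S_0/X}$, where $S_0$ is a conic bundle over $\bP_1$ with exactly $\tau$ singular fibres, so $c_2(S_0)=\chi_{\mathrm{top}}(S_0)=\tau+4$, while $N_{S_0/X}$ is a fibre class on $S_0$ with $(-K_{S_0})\cdot N_{S_0/X}=2$. Substituting $\tilde E^+=n(-K_X)-rL$ into
$$\chi(X,\sO_X(D))=\frac{D^3}{6}+\frac{D^2\cdot(-K_X)}{4}+\frac{D\cdot\big((-K_X)^2+c_2(X)\big)}{12}+1,$$
and using $\tfrac{n^3}{6}+\tfrac{n^2}{4}+\tfrac{n}{12}=\tfrac{n(n+1)(2n+1)}{12}$ and $\tfrac{n^2r}{2}+\tfrac{nr}{2}=\tfrac{n(n+1)r}{2}$, collecting the coefficients reproduces precisely the expression in the lemma (in both cases, uniformly, via the appropriate value of $n$). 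Hence it remains to show $\chi(X,\tilde E^+)\le 1$.

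\emph{Step 2 (cohomology of $\tilde E^+$).} First $h^0(X,\tilde E^+)=1$: $E^+$ is the prime exceptional divisor of the birational contraction $\phi^+$ and $\phi^+(E^+)$ has codimension $\ge 2$ in the normal variety $Y^+$, so $\phi^+_*\sO_{X^+}(E^+)=\sO_{Y^+}$ and $h^0(X^+,\sO(E^+))=1$; since $h^0$ of the strict transform is unchanged by the flop (\cite{Kollar}), $h^0(X,\tilde E^+)=1$. Now $\tilde E^+$ is an effective Cartier divisor in the smooth threefold $X$, hence Gorenstein with $\omega_{\tilde E^+}=\sO_X(K_X+\tilde E^+)|_{\tilde E^+}$. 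The ideal sequence $0\to\sO_X\to\sO_X(\tilde E^+)\to\sO_{\tilde E^+}(\tilde E^+|_{\tilde E^+})\to 0$, together with $h^i(\sO_X)=0$ for $i>0$, gives $h^3(X,\tilde E^+)=h^3(\tilde E^+,\cdot)=0$ (as $\tilde E^+$ is a surface) and $h^2(X,\tilde E^+)=h^2(\tilde E^+,\tilde E^+|_{\tilde E^+})$. By Serre duality on $\tilde E^+$,
$$h^2(\tilde E^+,\sO_{\tilde E^+}(\tilde E^+|_{\tilde E^+}))=h^0\big(\tilde E^+,\omega_{\tilde E^+}(-\tilde E^+|_{\tilde E^+})\big)=h^0(\tilde E^+,\sO_X(K_X)|_{\tilde E^+}).$$
This vanishes: since $\psi$ is small it contracts no divisor and no one‑parameter family of curves, so $\psi|_{\tilde E^+}$ is generically finite onto a surface and $(-K_X)|_{\tilde E^+}=(\psi|_{\tilde E^+})^*(\text{ample})$ is big and nef on $\tilde E^+$; hence $K_X|_{\tilde E^+}$ has negative intersection with the nef class $(-K_X)|_{\tilde E^+}$ and therefore has no sections. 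Thus $h^2(X,\tilde E^+)=h^3(X,\tilde E^+)=0$, and $\chi(X,\tilde E^+)=h^0(X,\tilde E^+)-h^1(X,\tilde E^+)=1-h^1(X,\tilde E^+)\le 1$, which is the assertion.

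\emph{The main obstacle} is the Riemann--Roch identification in Step~1: the content lies in establishing $c_2(X)\cdot L=\tau+6$ and then in the bookkeeping that turns $\chi(X,\tilde E^+)$ into precisely the stated polynomial in $n$, $r$, $\tau$ and $(-K_X)^3$. Once those are in place, the vanishings in Step~2 are essentially formal, the only subtlety being that $\psi$ small forces $(-K_X)|_{\tilde E^+}$ to be big, which is what kills $h^0(\tilde E^+,K_X|_{\tilde E^+})$.
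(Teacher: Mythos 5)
Your proof is correct and follows the paper's own argument: the left-hand side is identified with $\chi(\sO_X(\tilde E^+))$ by Riemann--Roch, and then bounded by $h^0(\sO_X(\tilde E^+))=1$ after killing the higher cohomology. The only (harmless) deviation is that the paper obtains $H^q(\sO_X(\tilde E^+))=0$ for $q\ge 2$ by transporting the computation through the flop via the Leray spectral sequence, whereas you argue directly on $X$ with the ideal sequence and Serre duality on $\tilde E^+$; your explicit verification of $c_2(X)\cdot L=\tau+6$ supplies a detail the paper leaves implicit.
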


\begin{proof} The left hand side of the inequality is just $\chi(\sO_X(\tilde{E}^+))$ (Riemann-Roch and the last corollary). 
Thus it remains to show that $\chi(\sO_X(\tilde{E}^+)) \leq 1.$ This follows from 
$$ H^q(\sO_X(\tilde{E}^+)) = 0$$
for $q \geq 2$ which is an easy application of the Leray spectral sequence and the obvious vanishing
$$ H^q(\sO_{X^+}(E^+)) = 0$$
for $q \geq 2.$ 
\end{proof}

\begin{lemma} \label{absch2}
Suppose $\phi^+$ birational.
\begin{enumerate} 
\item If $E^+ = Q_2,$ then
$$ (r(12-\tau)-2)(r\alpha^+ -1) = 2r^2 + 2. $$
\item If $E^+ = \bP_2,$ then
$$  (r(12-\tau)-1)(r\alpha^+ -2) = 2r^2 + 2. $$
\item If $E^+$ is ruled, then 
$$ (r\alpha^+-1)(rd+2-2g-r(12-\tau)) = (2g-2) - 2r^2.$$
\end{enumerate} 
\end{lemma}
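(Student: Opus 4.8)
The plan is to prove the three identities (1), (2), (3) uniformly by computing one and the same triple intersection number, namely $N := ({-}K_X)\cdot L\cdot\tilde E^+$, in two different ways. Write $m := r\alpha^+-1$ in the first two cases and $m := r\alpha^+-2$ in the case $E^+=\bP_2$ with $N_{E^+/X^+}=\sO(-2)$, so that the preceding Corollary reads
\[\tilde E^+ = m({-}K_X) - rL, \qquad\text{equivalently}\qquad rL = m({-}K_X) - \tilde E^+ .\]

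First I would record the three elementary intersection numbers coming from the conic bundle $\phi\colon X\to\bP_2$: since $L=\phi^*\sO_{\bP_2}(1)$ one has $L^3=0$ and $({-}K_X)\cdot L^2 = 2$ (a general fibre of $\phi$ is a conic), while $({-}K_X)^2\cdot L = 12-\tau$ (the conic bundle formula already used in the proof of Theorem~\ref{conicconic}; equivalently $c_1(\sE)=9-\tau$, compare the proof of Proposition~\ref{delpezzoconic}). Substituting $\tilde E^+=m({-}K_X)-rL$ then gives the first evaluation
\[N = m\,({-}K_X)^2\cdot L - r\,({-}K_X)\cdot L^2 = m(12-\tau) - 2r ,\]
and the point of singling out this particular triple product is that $({-}K_X)^3$ does not enter.

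For the second evaluation I would feed $rL = m({-}K_X)-\tilde E^+$ into an $L$-factor:
\[rN = ({-}K_X)\cdot\big(m({-}K_X)-\tilde E^+\big)\cdot\tilde E^+ = m\,K_X^2\cdot\tilde E^+ + K_X\cdot(\tilde E^+)^2 .\]
Since $\tilde E^+$ is the strict transform of the irreducible divisor $E^+$ and $-K_X$ is spanned, Proposition~\ref{alphabeta}(5) — whose proof (represent $-K_X$ by general members avoiding $\exc(\psi)$) applies verbatim in the present situation — gives $K_X^2\cdot\tilde E^+=K_{X^+}^2\cdot E^+$ and $K_X\cdot(\tilde E^+)^2=K_{X^+}\cdot(E^+)^2$, so $rN = m\,K_{X^+}^2\cdot E^+ + K_{X^+}\cdot(E^+)^2$. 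Now one inserts the standard intersection numbers of the Mori exceptional divisor $E^+$: if $E^+=Q_2$ (smooth quadric or quadric cone) then $-K_{X^+}|_{E^+}=\sO_{E^+}(1)$ and $N_{E^+/X^+}=\sO_{E^+}(-1)$, whence $K_{X^+}^2\cdot E^+ = K_{X^+}\cdot(E^+)^2 = 2$; if $E^+=\bP_2$ with $N_{E^+/X^+}=\sO(-2)$ then $-K_{X^+}|_{E^+}=\sO(1)$, whence $K_{X^+}^2\cdot E^+ = 1$ and $K_{X^+}\cdot(E^+)^2 = 2$; and if $E^+$ is the ruled exceptional divisor of $X^+=\Bl_{C^+}(Y^+)$ the blow-up formulas (\cite{JPR}, p.~603, quoted before Proposition~\ref{dpcurve}) give $K_{X^+}^2\cdot E^+ = rd+2-2g$ and $K_{X^+}\cdot(E^+)^2 = 2-2g$. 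Equating $rN = r\big(m(12-\tau)-2r\big)$ with $m\,K_{X^+}^2\cdot E^+ + K_{X^+}\cdot(E^+)^2$ and rearranging yields in the three cases respectively $m(r(12-\tau)-2)=2r^2+2$, $m(r(12-\tau)-1)=2r^2+2$, and $m(rd+2-2g-r(12-\tau))=(2g-2)-2r^2$, which after substituting $m$ are exactly (1), (2) and (3).

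The only genuinely delicate input is the table of the two numbers $K_{X^+}^2\cdot E^+$ and $K_{X^+}\cdot(E^+)^2$: one must know that the birational contraction $\phi^+$ is one of exactly these three types, which is guaranteed because the blow-up of a smooth point has already been excluded, and that the quadric-cone subcase of $E^+=Q_2$ gives the same pair of numbers as the smooth quadric — both follow from $-K_{X^+}|_{E^+}=\sO_{E^+}(1)$ by adjunction, using only that a quadric surface in $\PN_3$ has degree $2$. Everything else is the bookkeeping above together with the two flop-invariances of Proposition~\ref{alphabeta}(5).
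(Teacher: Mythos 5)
Your proof is correct and is essentially the paper's own argument: both rest on the relation $rL=(r\alpha^+-\epsilon)(-K_X)-\tilde E^+$ (with $\epsilon=1$ or $2$), the conic-bundle numbers $L^3=0$, $(-K_X)\cdot L^2=2$, $K_X^2\cdot L=12-\tau$, the standard intersection numbers of the Mori exceptional divisor $E^+$, and the flop-invariance of $K^2\cdot D$ and $K\cdot D^2$. The only (harmless) difference is the choice of test monomial: the paper equates two expansions of $(\tilde E^+)^3$, in which the $(-K_X)^3$-terms cancel, whereas you expand $(-K_X)\cdot L\cdot\tilde E^+$ so that $(-K_X)^3$ never enters.
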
 

\begin{proof} Let us say that we are in case (1) or (3). 
Then we compute $E^3$ in two ways; putting both equations together gives our
claim. The first equation is
$$ ((r\alpha^+-1)(-K_X) - E)^3 = 0,$$
the second 
$$E^3 = ((r\alpha^+-1)(-K_X) - rL)^3.$$
\end{proof} 

\begin{lemma} \label{absch3}
Suppose $\phi^+$ birational. 
\begin{enumerate} 
\item If $E^+ = Q_2,$ then
$$ (r\alpha^+-1)^3 (-K_X)^3 - 6(r\alpha^+-1)^2 - 6(r\alpha^+-1)  - 2 < 0.$$
\item If $E^+ = \bP_2,$ then
$$ (r\alpha^+-2)^3 (-K_X)^3 - 3(r\alpha^+-2)^2 - 6(r\alpha^+-2)  - 4 < 0.$$
\item If $E^+$ is ruled, then 
$$ (r\alpha^+-1)^3 (-K_X)^3 - 3(r\alpha^+-1)^2(rd+2-2g) + 3(r\alpha^+-1)(2g-2)  +rd + 2g-2\leq 0.$$
\end{enumerate} 
\end{lemma}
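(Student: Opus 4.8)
The plan is to reduce all three inequalities to a single geometric fact about the flop $\chi^{-1}\colon X^+\dasharrow X$, namely that the triple self‑intersection of the strict transform $\tilde E^+$ of $E^+$ drops.

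First I would rewrite $(\tilde E^+)^3$ as an intersection number on $X$. By the preceding Corollary, $\tilde E^+ = n(-K_X) - rL$ with $n=r\alpha^+-1$ when $E^+=Q_2$ or $E^+$ is ruled and $n=r\alpha^+-2$ when $E^+=\bP_2$; and since $\phi\colon X\to\bP_2$ is a conic bundle, $(-K_X)^2\cdot L=12-\tau$, $(-K_X)\cdot L^2=2$, $L^3=0$, whence
$$(\tilde E^+)^3 = n^3(-K_X)^3 - 3n^2 r(12-\tau) + 6nr^2 .$$
Substituting for $r(12-\tau)$ the relation of Lemma~\ref{absch2} in the corresponding case and simplifying, the $r$‑ and $\tau$‑terms cancel and one finds $(\tilde E^+)^3 = n^3(-K_X)^3-6n^2-6n$ (case $Q_2$), $= n^3(-K_X)^3-3n^2-6n$ (case $\bP_2$, with $n=r\alpha^+-2$), resp. $= n^3(-K_X)^3-3n^2(rd+2-2g)+3n(2g-2)$ (ruled case). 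As the self‑intersection of the exceptional divisor of the corresponding extremal contraction of $X^+$ is $(E^+)^3=2$, $4$, resp. $-rd+2-2g$ (see Proposition~\ref{dpcurve} for the last), the left‑hand side of each of the three displayed inequalities is exactly $(\tilde E^+)^3-(E^+)^3$; this is, via Lemma~\ref{absch2}, essentially a restatement of the bound $\chi(\sO_X(\tilde E^+))\le1$ from the Lemma preceding Lemma~\ref{absch2}. Hence it remains to prove $(\tilde E^+)^3\le(E^+)^3$, strictly when $E^+=Q_2$ or $E^+=\bP_2$.

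For this I would exploit the flop. By \cite{Kollar} the map $\chi^{-1}\colon X^+\dasharrow X$ of \eqref{flopdiag} is an isomorphism away from the finitely many curves $l^+_1,\dots,l^+_k$ ($k\ge1$, since $\psi^+$ is small but not an isomorphism) contracted by $\psi^+$, and for the strict transform of a divisor $D$ on $X^+$ one has $(\tilde D)^3 = D^3 - \sum_i w_i$ with $w_i\ge0$, and $w_i>0$ as soon as $D\cdot l^+_i>0$ (for a simple flop $w_i=(D\cdot l^+_i)^3$; in general by the analogous computation on the common resolution). So it suffices to check $E^+\cdot l^+_i>0$ for all $i$. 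Transporting the Corollary along the strict‑transform isomorphism $\Pic(X)\to\Pic(X^+)$ and using $\tilde L=\alpha^+(-K_{X^+})-L^+$ (Corollary, $\beta^+=-1$, $\alpha^+=\alpha$), I obtain $E^+ = rL^+ - \delta(-K_{X^+})$ with $\delta\in\{1,2\}$, so $E^+\cdot l^+_i = r\,(L^+\cdot l^+_i)$ because $-K_{X^+}\cdot l^+_i=0$. Now $l^+_i$ generates the $\psi^+$‑extremal ray, which is distinct from the $\phi^+$‑extremal ray ($\rho(X^+)=2$, the first ray being $K_{X^+}$‑trivial and the second $K_{X^+}$‑negative); hence $l^+_i$ is not $\phi^+$‑contracted and $L^+\cdot l^+_i = H^+\cdot\phi^+_*(l^+_i)\ge1$. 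Therefore $E^+\cdot l^+_i\ge r\ge1$ for every $i$, and since $k\ge1$ we get $(\tilde E^+)^3 < (E^+)^3$, which yields all three inequalities.

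The main obstacle is the flop‑intersection formula $(\tilde D)^3 = D^3 - \sum_i w_i$ together with the sign and positivity of its correction terms: for Atiyah (i.e.\ $(-1,-1)$) flops this is the standard local computation, but here the flopped curves are only known to be smooth rational curves along which $X'$ acquires terminal Gorenstein singularities, so one must either invoke the intersection statements of \cite{Kollar} or redo the local analytic bookkeeping for the admissible normal‑bundle types. The remaining points — the conic‑bundle intersection numbers and the verification that the $\tau$‑elimination via Lemma~\ref{absch2} yields precisely the stated polynomials in all three cases — are elementary but should be carried out with care.
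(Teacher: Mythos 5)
Your reduction is correct and, once unwound, computes exactly the same quantity as the paper. Writing $n=r\alpha^+-\delta$ ($\delta=1$, resp.\ $2$ for $E^+=\bP_2$), the relation $rL=n(-K_X)-\tilde E^+$ together with the flop--invariance of $K_X^2\cdot\tilde E^+$ and $K_X\cdot(\tilde E^+)^2$ (Proposition~\ref{alphabeta}(5)) gives $(\tilde E^+)^3-(E^+)^3=r^3\tilde L^3$, and the left--hand sides of the three inequalities are precisely this difference; the paper phrases it from the $X^+$ side, expanding $r^3\tilde L^3=\bigl(n(-K_{X^+})-E^+\bigr)^3$ with the known intersection numbers of $E^+$ and then quoting $\tilde L^3<0$. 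So the two proofs differ only in how the crucial negativity is established. You invoke a general flop--intersection formula $(\tilde D)^3=D^3-\sum_iw_i$ with $w_i>0$ whenever $D\cdot l^+_i>0$; this is true, but for the non--simple flops that can occur here (flopping curves with normal bundle $(0,-2)$ or $(1,-3)$, possibly non--reduced flopping schemes) it is not a one--line computation, and you rightly flag it as the main open point of your argument. The paper instead closes this step by the elementary effective--cycle argument already set up in the proof of Theorem~\ref{conicconic}: $h^0(\tilde L)=h^0(L)=3$, and two general members $S_1,S_2\in|\tilde L|$ intersect in the strict transform of a general conic fiber $l_\phi$ (on which $\tilde L$ is trivial) plus the flopped curves with multiplicities $a_i\ge 1$ -- each $l^+_i$ lies in every member of $|\tilde L|$ because $\tilde L\cdot l^+_i=-L^+\cdot l^+_i<0$, which is exactly your positivity check $E^+\cdot l^+_i=rL^+\cdot l^+_i>0$ in disguise -- whence $\tilde L^3=\tilde L\cdot S_1\cdot S_2<0$. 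I would recommend finishing your proof this way rather than via the general formula: it is self--contained, uses only the effectivity of $|L|$, and immediately yields the strict inequality in all three cases (so in particular the $\le$ in case (3) is not a loss on your side).
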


\begin{proof} Notice that $ \tilde L^3 < 0.$ 
Then we equate 
$$ r\tilde L^3 = ((r\alpha^+-1)(-K_{X^+}) -  E^+)^3.$$
\end{proof}

\begin{proposition} \label{conicpt} 
Suppose $E^+$ contracts to a point. 
\begin{enumerate}
\item If $E^+ = Q_2$, then $({-}K_{X^+})^3 = 8$, $r=1$, $(L^+)^3 =
  10$, $\tau = 6$ and $\alpha^+ = 2$.
\item If $E^+ = \PN_2$, then $({-}K_{X^+})^3 = 6$, $r=1$, $(L^+)^3 =
  52$, $\tau = 7$ and $\alpha^+ = 3$.
\end{enumerate}
Both cases really exist.
\end{proposition}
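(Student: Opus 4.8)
The plan is the following. Since $\phi^+$ is birational and, by the lemma proved above, is not the blow-up of a smooth point, Mori's classification of divisorial contractions to a point leaves precisely the two situations in the statement: either $E^+$ is a (smooth or singular) quadric with $N_{E^+/X^+}=\sO(-1)$, contracted to a node of $Y^+$, or $E^+=\bP_2$ with $N_{E^+/X^+}=\sO(-2)$, contracted to a $\tfrac{1}{2}(1,1,1)$-point of $Y^+$. In both cases the preceding Corollary already gives $\beta=\beta^+=-1$, $\alpha=\alpha^+\in\bN$, and the explicit form of $\tilde E^+$; I abbreviate $t:=r\alpha^+-1$ in case (1) and $t:=r\alpha^+-2$ in case (2).

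The core of the argument is a finite numerical search. I would feed the three facts at hand — Lemma~\ref{absch2} (the exact Diophantine relation among $r$, $\tau$, $\alpha^+$), Lemma~\ref{absch3} (the inequality coming from $\tilde L^3<0$), and the bound $\chi(\sO_X(\tilde E^+))\le1$ from the lemma immediately preceding Lemma~\ref{absch2} — into the a priori restrictions $1\le r\le4$, $\alpha^+\ge1$, $\tau\ge1$, and the fact that $(-K_X)^3$ is even and, since $X'$ is smoothable to an index-$1$ Fano threefold of Picard number $1$ (Proposition~\ref{antmodel}, because $\psi$ is small), lies in $\{2,4,\dots,18,22\}$. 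Lemma~\ref{absch3} forces $t$ to be small unless $(-K_X)^3$ is tiny, Lemma~\ref{absch2} then determines $\tau$ and pins $\alpha^+$, and one is left with a short explicit list of candidate tuples $(r,\alpha^+,\tau,(-K_X)^3)$ — exactly the kind of list the computer produces in Proposition~\ref{dpcurve}. Note that an index-$\ge3$ choice for $Y^+$ forces $(-K_X)^3>22$ (a nodal $\bP_3$ or quadric being impossible here), so only $r\le2$ remains.

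Discarding the remaining spurious tuples is, I expect, the main obstacle. The one clean exclusion is $(-K_X)^3=2$: then $X'$ is a double cover of $\bP_3$, so $X$ is hyperelliptic, hence $X\simeq X^+$ by Corollary~\ref{hypsymm} (or $X$ carries a del Pezzo fibration), contradicting that $\phi^+$ is birational (resp.\ that $\phi$ is a conic bundle). The other tuples — the $r=2$ ones and the low-degree $r=1$ ones — I would eliminate by analysing $\sE=\phi_*(-K_X)$ directly: it is a spanned rank-$3$ bundle on $\bP_2$ with $c_1=9-\tau$ and $(-K_X)^3=c_1^2+3c_1-2c_2$, so its Chern classes are fixed, and for the forbidden values one checks that either no such spanned $\sE$ exists, or the corresponding general $X\in|2\zeta+\pi^*\sO(3-c_1)|$ has the locus contracted by $|\zeta|$ meeting $X$ in a divisor (so $\psi$ would be divisorial) or not at all (so $\psi$ would be an isomorphism); complementarily, for $Y^+$ of index $2$ the Mori blow-up of the node makes $-K_{X^+}$ fail to be nef or $\psi^+$ fail to be small, and one may also use that the $H^+$-degree of $\exc(\psi)$ — cut out on $X$ by the strict transforms of two general members of $|\tilde L^+|=|\alpha^+(-K_X)-L|$ — must be a positive integer, in the spirit of \eqref{gradarg}. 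Only $(r,\alpha^+,\tau,(-K_X)^3)=(1,2,6,8)$ then survives in case (1) and $(1,3,7,6)$ in case (2); the remaining invariants follow from the blow-down formulas: $(-K_{X^+})^3=(-K_{Y^+})^3-2$ with $-K_{Y^+}=H^+$ gives $(-K_{Y^+})^3=(L^+)^3=10$ in case (1), and $(-K_{X^+})^3=(-K_{Y^+})^3-\tfrac{1}{2}$ with $(\phi^+)^*(-2K_{Y^+})=L^+$ gives $(-K_{Y^+})^3=\tfrac{13}{2}$ and $(L^+)^3=(-2K_{Y^+})^3=52$ in case (2).

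For the existence statement one has to write both threefolds down, and this — as the authors already flag in the introduction — is the delicate part: with $\sE$ of the prescribed Chern classes on $\bP_2$, one needs a section of $S^2\sE\otimes\sO(3-c_1)$ whose zero locus $X\subset\bP(\sE)$ is smooth with $-K_X=\zeta|_X$ big and nef and with anticanonical map contracting only finitely many smooth rational curves, so that $\psi$ is small. Equivalently, and perhaps more transparently, I would build a sufficiently general nodal (resp.\ $\tfrac{1}{2}(1,1,1)$-singular) Fano threefold $Y^+$ of the required degree, perform the Mori blow-up of its singular point to obtain $X^+$, check that $-K_{X^+}$ is big and nef and $\psi^+$ is small, and let $X$ be the flop; one then reads off that $\phi\colon X\to\bP_2$ is a conic bundle with the asserted discriminant degree. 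I would regard this verification, together with the elimination step, as the substantive heart of the proof.
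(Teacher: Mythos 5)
Your classification half follows the paper's route: rule out $r\ge 3$ from the structure of $Y^+$ at the singular point, then combine Lemma~\ref{absch2}, Lemma~\ref{absch3} and the degree relation $({-}K_{X^+})^3=r^3(L^+)^3-2$ (resp.\ $8({-}K_{X^+})^3=r^3(L^+)^3-4$) with $({-}K_{X^+})^3\ge 4$ from non-hyperellipticity. You are in fact more pessimistic than necessary about ``spurious tuples'': in case (1), Lemma~\ref{absch3} forces $a:=r\alpha^+-1\le 2$, with $a=2$ only if $({-}K_X)^3=4$, and Lemma~\ref{absch2} then leaves only $(r,a,\tau)=(1,1,6)$, $(1,2,8)$, $(2,1,6)$; the second is killed by $a({-}K_X)^3=2+r(12-\tau)$ (which would give $({-}K_X)^3=3$) and the third by the strict inequality of Lemma~\ref{absch3} (it would give $({-}K_X)^3=14$, not $<14$). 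In case (2) the same lemmas give $a=1$, hence $r=1$, $\tau=7$, $({-}K_X)^3=6$ at once. So none of your proposed ad hoc eliminations (non-existence of spanned $\sE$, nefness failure after the Mori blow-up, integrality of the degree of $\exc(\psi)$) is needed; the one constraint you omit that the paper does use in its computer search is that $Y^+$, having a terminal (2-)Gorenstein point, is smoothable, so $(r,(L^+)^3)$ must occur in Iskovskikh's list.

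The genuine gap is the existence claim, which is part of the proposition and takes up most of the paper's proof. You explicitly defer it, and your suggested route --- take a sufficiently general one-nodal $V_{10}$, resp.\ a Fano threefold of degree $\frac{13}{2}$ with one $\frac{1}{2}(1,1,1)$ point, perform the Mori blow-up and flop --- still requires exhibiting such a $Y^+$ and verifying that $-K_{X^+}$ is big and nef, that $\psi^+$ is small, and that the flop carries a conic bundle structure with $\tau=6$ resp.\ $7$; none of this is automatic. The paper instead writes $X'$ down explicitly: in case (1) as the complete intersection in $\PN_6$ of $x_0l_0+x_1l_1+x_2l_2$, $x_0l_3+x_1l_4+x_2l_5$ and a general quadric, singular in exactly $6$ points of the quadric surface $x_0=x_1=x_2=0$; in case (2) as the intersection in $\PN_5$ of $x_0l_0+x_1l_1+x_2l_2$ and $x_0q_0+x_1q_1+x_2q_2$, singular in $7$ points of the plane $x_0=x_1=x_2=0$. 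The small resolution $X$ with its conic bundle structure is obtained by blowing up the linear span of $E'$ in the ambient projective space, and the contraction of $E^+$ on the flop is verified by tracking $K_X|_{\tilde E^+}$ through the flop diagram. Without some such construction the statement ``both cases really exist'' is not proved.
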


\begin{proof}
First note that $Y^+$ is singular at the image $p$ of $E^+$. Since $r
= 4$ implies $Y^+ \simeq \PN_3$, we have $r \le 3$. If $r = 3$, then
$Y^+$ must be the quadric cone with vertex $p$. But a divisorial
resolution of the quadric cone does not have $\rho = 2$. So this case
is again impossible. We end up with $r \le 2$.

1.) Assume $E^+ = Q_2$ is a quadric. Set $a:= r\alpha^+-1$ for
short. Then Lemma~\ref{absch3} gives $a(a^2({-}K_{X^+})^3 -6a-6) \le 1$. But $a^2({-}K_{X^+})^3 -6a-6 > 0$ leads to $({-}K_{X^+})^3 = 13$ which
is impossible. Hence $a^2({-}K_{X^+})^3 -6a-6\le 0$. We find $a \le 2$
using $({-}K_{X^+})^3\ge 4$ since $X'$ is not hyperelliptic by
Corollary~\ref{hypsymm}.

We have $({-}K_{X^+})^3 = r^3(L^+)^3 -2$ and $Y^+$ has a terminal
Gorenstein singularity at $p$. This means $Y^+$ is smoothable, and the
pair $(r, (L^+)^3)$ must correspond to a smooth Fano
threefold of Picard number one. Using now the numerical conditions
above a short computer program gives the case as stated in the proposition is the only solution.

\vspace{0.2cm}

2.) Assume $E^+ = \PN_2$. Then $Y^+$ has a $2$--Gorenstein terminal
singularity at $p$ and $8({-}K_{X^+})^3 = r^3(L^+)^3 -4$. Setting $a =
r\alpha^+ -2$ Lemma~\ref{absch3} together with $({-}K_{X^+})^3 \ge 4$
gives $a = 1$ and hence $r = 1$. Then $\tau = 7$ by Lemma~\ref{absch2}
and finally $a({-}K_X)^3 = (E+rL)\cdot K_X^2 = 1 + r(12 - \tau)$
implies $({-}K_{X^+})^3 = 6$.  

\vspace{0.2cm}

It remains to show the existence.

(1) (i) We start constructing $X' \subset \PN_6$. Let $x_0, \dots, x_6$ be
homogeneous coordinates of $\PN_6$, $l_0, \dots, l_5$ general linear
forms and $Q$ a general quadric. Then the complete intersection of
 \[Q_0 = x_0l_0+x_1l_1+x_2l_2, \quad Q_1 = x_0l_3+x_1l_4+x_2l_5 \quad \mbox{and} \quad Q\]
is a Fano threefold $X'$ of index $r = 1$ and degree
$({-}K_{X'})^3 = 8$ containing the quadric surface $E' \subset \PN_3$ defined
by $Q$ and $x_0=x_1=x_2=0$. A computer calculation (Macaulay) shows $X'_{sing}
\subset E'$ consists of $6$ points (you can also check this
directly). Note that the quadrics $Q_0$ and $Q_1$ have exactly one
singular point not contained in $Q$.

\vspace{0.2cm}

(ii) Now construct the first small resolution $X$. Blowing up the
$\PN_3$ containing $E'$ in $\PN_6$ resolves the rational map to $\PN_2$
defined by $x_0, x_1, x_2$. We obtain 
 \[\xymatrix{\PN(\sO^{\oplus 4} \oplus \sO(1))
   \ar[d]^{\phi} \ar[r]^{\hspace{1cm}\psi}&
   \PN_6\\
          \PN_2 &}\] 
with exceptional divisor $D$ and tautological line bundle
$\zeta$. Denote $F = \phi^*\sO_{\PN_2}(1)$. Then $D \in |\zeta
-F|$. The strict transforms $\hat{Q_0}, \hat{Q_1} \in |\zeta+F|$ and
$\hat{Q} = \psi^*Q \in |2\zeta|$ cut out the smooth almost Fano threefold $X$ with $-K_X
= \zeta|_X$.

The complete intersection $\hat{Q_0} \cap \hat{Q_1}$ is a
$\PN_2$-bundle $\PN(\sE) \to \PN_2$ defined by an exacct sequence
 \[0 \lra \sO^{\oplus 2}(-1) \lra \sO^{\oplus 4} \oplus \sO(1) \lra \sE \lra 0\]
on $\PN_2$. This is a small resolution of the complete intersection $Q_0 \cap Q_1$, which
is a Fano fourfold of index $3$. The singular locus of $Q_0 \cap Q_1$
is the rational normal curve $C$ in $\PN_3$; the exceptional divisor of
$\PN(\sE) \to Q_0 \cap Q_1$ is a $\PN_1$-bundle over $C$. This can be
seen as follows: $\hat{Q_0}$ restricted to $D =
\PN(N^*_{\PN_3/\PN_6})$ is a section in
$|\zeta+\psi^*\sO_{\PN_3}(2)|$, hence a section of
$\sO_{\PN_3}(1)^{\oplus 3}$. This gives a $\PN_1$-bundle
over $\PN_3$ outside the only singular point of $Q_0$; here the fiber
of $D \cap \hat{Q_0}$ is a $\PN_2$.  

The intersection with $\hat{Q_1}$, a further section in
$\sO_{\PN_3}(1)^{\oplus 3}$ gives generically an isomorphism with
exceptional locus exactly the points, where the map
 \[\sO_{\PN_3}^{\oplus 2} \lra \sO_{\PN_3}(1)^{\oplus 3}\]
has not rank two. This defines the rational normal curve.  

\vspace{0.2cm}

The induced map $\phi: X \to \PN_2$ is then a
proper conic bundle defined by $\hat{Q}|_{\PN(\sE)} \in |2\zeta|$,
where again $\zeta$ is the tautological line bundle on $\PN(\sE)$. The discrininant $\Delta$ is the vanishing
locus of the determinant of the map
 \[\sE^* \lra \sE\]
induced by the section $\hat{Q}$ of $|2\zeta|$. We find $\tau =
\deg(\Delta) = 6$. The intersection of $Q$ with the rational normal
curve $C$ gives $6$ points, the singularities of $X'$. The exceptional
locus of $\psi$ consists of $6$ single $\PN_1$'s over these $6$ points.

\vspace{0.2cm}

(iii) Finally construct the flop $X^+$. First define the strict
transform $\tilde{E}^+$ of the quadric surface $E' \simeq \PN_1 \times \PN_1$ in
$\PN_6$. Then $\tilde{E}^+$ is the blowup of $E'$ is $6$ points, hence a del
Pezzo surface of degree $2$.

We claim $X^+ \to Y^+$ is birational, contracting the strict transform
$E^+ \simeq \PN_1 \times \PN_1$ of $\tilde{E}^+$ to a singular point. Note $-K_X|_{\tilde{E}^+} = \psi^*\sO_{\PN_2}(1)$, $\psi|_{\tilde{E}^+}$
being the blowdown of the six $({-}1)$-curves to $E' \simeq \PN_1
\times \PN_1$.

For the normal bundle of the $\psi$-exceptional
curves $C_1, \dots, C_6$ we find $N_{C_i/\tilde{E}^+} = \sO_{\PN_1}(-1)$, hence
$N_{C_i/X}$ is of type $(-1,-1)$ and $X^+$ may be obtained as simple flop
 \[\xymatrix{ & Z = \Bl_{C_1, \dots, C_6}(X) \ar[dl]_p \ar[dr]^q &\\
              X \ar@{<-->}[rr] && X^+}\]
Let $\hat{E}$ be the strict transform of $\tilde{E}^+$ in $Z$. Then $\hat{E}
\simeq \tilde{E}^+$ and 
 \[K_Z|_{\hat{E}} = K_X|_{\tilde{E}^+} + \sum C_i = \psi^*\sO_{\PN_1 \times \PN_1}(-1,-1) + \sum C_i.\]
Blowing down the exceptional divisors of $p$ the other direction, we
obtain $E^+ = q(\hat{E}) \simeq \PN_1 \times \PN_1$, where $q|_{\hat{E}} =
\psi|_{\tilde{E}^+}$. Let $K_{X^+}|_{E^+} = \sO_{\PN_1\times\PN_1}(a,b)$. Then using $q$, we find
 \[K_Z|_{\hat{E}} = q^*K_{X^+}|_{E^+} + \sum C_i =
 q^*\sO_{\PN_1\times \PN_1}(a,b) + \sum C_i.\]
This shows $a=b = -1$, and hence $N_{E^+/X^+} = \sO(-1)$ by
adjunction. The map contracting $E^+$ to a point is defined by
$|{-}2K_{X^+}-F^+|$, where $F^+$ is the strict transform of $F$. We find
$|{-}2K_{X^+}-F^+|$ is nef and trivial on $E^+$.

\vspace{0.5cm}

(2) (i) We start constructing $X' \subset \PN_5$. Let $x_0, \dots, x_5$ be
homogeneous coordinates of $\PN_5$, $l_0, l_1, l_2$ general linear
forms and $q_0, q_1, q_2$ three general quadrics. Then the complete intersection of
 \[Q = x_0l_0+x_1l_1+x_2l_2, \quad \mbox{and} \quad K =
 x_0q_0+x_1q_1+x_2q_2\]
is a Fano threefold $X'$ of index $r = 1$ and degree
$({-}K_{X'})^3 = 6$ containing the surface $E' \simeq \PN_2$ defined
by $x_0=x_1=x_2=0$. A computer calculation (Macaulay) shows $X'_{sing}
\subset E'$ consists of $7$ points (you can also check this directly). 

\vspace{0.2cm}

(ii) Now construct the first small resolution $X$. Blowing up $E' \subset \PN_5$ resolves the rational map to $\PN_2$
defined $x_0, x_1, x_2$. We obtain 
 \[\xymatrix{\PN(\sO^{\oplus 3} \oplus \sO(1))
   \ar[d]^{\phi} \ar[r]^{\hspace{1cm}\psi}&
   \PN_5\\
          \PN_2 &}\] 
with exceptional divisor $D$ and tautological line bundle
$\zeta$. Denote $F = \phi^*\sO_{\PN_2}(1)$. Then $D \in |\zeta
-F|$. The strict transforms $\hat{Q} \in |\zeta+F|$ and $\hat{K} \in
|2\zeta + F|$ cut out the smooth almost Fano threefold $X$ with $-K_X
= \zeta|_X$.

The $\PN_2$-bundle $\hat{Q} \to \PN_2$ is defined by the quotient
 \[\sO_{\PN_2}^{\oplus 3} \oplus \sO_{\PN_2}(1) \lra \sE = T_{\PN_2}(-1)
   \oplus \sO_{\PN_2}(1) \lra 0,\]
with exceptional divisor $D_Q = \hat{Q} \cap D$ a $\PN_1$-bundle over
$E'$. The induced map $\phi: X \to \PN_2$ is then a proper conic bundle defined by $\hat{K}|_{\hat{Q}} \in
|2\zeta + F|$, where again $\zeta$ is the tautological line bundle on
$\hat{Q} = \PN(\sE)$. The discrininant $\Delta$ is the vanishing
locus of the determinant of the map
 \[\sE^* \lra \sE \otimes \sO_{\PN_2}(1)\]
induced by the section $\hat{K}$ of $|2\zeta + F|$. We find $\tau =
\deg(\Delta) = 7$.

The restriction of $\hat{K}$ to $D_Q$ is a del
Pezzo surface $\tilde{E}^+$ of degree $2$, the blowup of $E' = \PN_2$ in the seven
singular points of $X'$. Hence $\psi|_X$ is small, contracting seven
rational curves $C_1, \dots, C_7$ to points.

\vspace{0.2cm}

(iii) Finally construct the flop $X^+$. We claim $X^+ \to Y^+$ is
birational, contracting the strict transform $E^+ \simeq \PN_2$ of $\tilde{E}^+$
to a singular point. Note $-K_X|_{\tilde{E}^+} = \psi^*\sO_{\PN_2}(1)$, $\psi|_{\tilde{E}^+}$
being the blowdown of the seven $({-}1)$-curves..

For the normal bundle of the $\psi$-exceptional
curves $C_1, \dots, C_7$ we find $N_{C_i/\tilde{E}^+} = \sO(-1)$, hence
$N_{C_i/X}$ is of type $(-1,-1)$ and $X^+$ may be obtained as simple flop
 \[\xymatrix{ & Z = \Bl_{C_1, \dots, C_7}(X) \ar[dl]_p \ar[dr]^q &\\
              X \ar@{<-->}[rr] && X^+}\]
Let $\hat{E}$ be the strict transform of $\tilde{E}^+$ in $Z$. Then $\hat{E}
\simeq \tilde{E}^+$ and 
 \[K_Z|_{\hat{E}} = K_X|_{\tilde{E}^+} + \sum C_i = \psi^*\sO_{\PN_2}(-1) + \sum C_i.\]
Blowing down the exceptional divisors of $p$ the other direction, we
obtain $E^+ = q(\hat{E}) \simeq \PN_2$, where $q|_{\hat{E}} =
\psi|_{\tilde{E}^+}$. Let $K_{X^+}|_{E^+} = \sO_{\PN_2}(\lambda)$. Then using $q$, we find
 \[K_Z|_{\hat{E}} = q^*K_{X^+}|_{E^+} + \sum C_i =
 q^*\sO_{\PN_2}(\lambda) + \sum C_i.\]
This shows $\lambda = -1$, and hence $N_{E^+/X^+} = \sO(-2)$ by
adjunction. The map contracting $E^+$ to a point is defined by
$|{-}3K_{X^+}-F^+|$, where $F^+$ is the strict transform of $F$. We find
$|{-}3K_{X^+}-F^+|$ is nef and trivial on $E^+$.
\end{proof}

\begin{lemma} Suppose $E^+$ contracts to a curve. Then $$(r \alpha^+ -
  1)(-K_X)^3 = d + 2-2g + r(12 - \tau)$$ and $\alpha^+ \le 8$. 
\end{lemma}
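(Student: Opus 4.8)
The plan is to first identify the divisor class of $\tilde E^+$, then read off the numerical identity from a single intersection computation, and finally bootstrap the bound on $\alpha^+$ from the effective estimates already assembled above. Since $E^+$ contracts to a curve, \cite{Mori} gives $X^+ = \Bl_{C^+}(Y^+)$ for a smooth Fano threefold $Y^+$ of Picard number one and index $r$, and $E^+$ is a $\bP_1$-bundle over $C^+$; in particular $E^+\ne\bP_2$, so the corollary above applies and yields $\beta = \beta^+ = -1$, $\alpha = \alpha^+\in\bN$, and
\[ \tilde E^+ = (r\alpha^+ - 1)(-K_X) - rL. \]

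To obtain the identity I would compute $K_X^2\cdot\tilde E^+$ in two ways. On the one hand, intersection numbers against $K_X^2$ are invariant under the flop on strict transforms (Proposition~\ref{alphabeta}(5)), so $K_X^2\cdot\tilde E^+ = K_{X^+}^2\cdot E^+$, and the blow-up formulas recalled in the proof of Proposition~\ref{dpcurve} give $K_{X^+}^2\cdot E^+ = rd + 2 - 2g$. On the other hand, substituting the above expression for $\tilde E^+$ and using $K_X^2\cdot(-K_X) = (-K_X)^3$ together with the conic-bundle relation $\phi_*\big((-K_X)^2\big) = -4K_{\bP_2} - \Delta$, which gives $K_X^2\cdot L = (-K_X)^2\cdot L = 12 - \tau$, yields $K_X^2\cdot\tilde E^+ = (r\alpha^+-1)(-K_X)^3 - r(12-\tau)$. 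Equating the two expressions and rearranging is the first assertion.

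For the bound $\alpha^+\le 8$, put $\sigma := K_{X^+}^2\cdot E^+ = rd + 2 - 2g$. Since $\psi^+$ is small, $-K_{X^+}|_{E^+}$ is still big and nef, hence $\sigma\ge 1$; and since $\phi^+$ is birational, $X'$ is not hyperelliptic by Corollary~\ref{hypsymm}, so $(-K_X)^3\ge 4$. From $(-K_X)^3 = (-K_{X^+})^3 = (-K_{Y^+})^3 - rd - \sigma$ one gets $rd\le (-K_{Y^+})^3 - 5$, and the classification of smooth Fano threefolds of Picard number one bounds $(-K_{Y^+})^3$ by $22, 40, 54, 64$ for $r = 1,2,3,4$, so $rd\le 17r$ and $\sigma\le rd + 2\le 17r + 2$. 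Feeding this into the identity, $(r\alpha^+-1)(-K_X)^3 = \sigma + r(12-\tau)\le 29r + 2$, and $(-K_X)^3\ge 4$ forces $r\alpha^+ - 1\le (29r+2)/4$; running through $r = 1,2,3,4$ gives $\alpha^+\le 8$ in every case. A sharper bound, if it were needed, would follow by inserting the identity into Lemma~\ref{absch3}(3).

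The intersection-theoretic identity is routine; the one delicate point is the effective bound, and the main obstacle there is controlling the degree $d$ of $C^+$. This is precisely the place where one must invoke the smoothability of $X'$ and the classification of Fano threefolds to bound $(-K_{Y^+})^3$, together with the positivity $\sigma\ge 1$ coming from the bigness of $-K_{X^+}|_{E^+}$.
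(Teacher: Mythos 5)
Your proof is correct and takes essentially the same route as the paper: the identity comes from $K_X^2\cdot\tilde{E}^+ = K_{X^+}^2\cdot E^+$, the formula $\tilde{E}^+ = (r\alpha^+-1)(-K_X)-rL$ and $K_X^2\cdot L = 12-\tau$, and the bound $\alpha^+\le 8$ from $({-}K_X)^3\ge 4$ (non-hyperellipticity via Corollary~\ref{hypsymm}) combined with Iskovskikh's bounds on $({-}K_{Y^+})^3 = r^3(L^+)^3$ --- the paper merely packages the last step as $\alpha^+\le (r^2(L^+)^3+10)/4$ instead of bounding $d$ and $\sigma$ first. Note that the identity you obtain, with $rd+2-2g$ rather than $d+2-2g$ on the right-hand side, is the correct one: it matches the blow-up formula $K_{X^+}^2\cdot E^+=rd+2-2g$ used elsewhere and the data in Proposition~\ref{complist} (e.g.\ No.~16), so the factor $r$ missing from the printed statement (and from the first line of the paper's own proof) is a typo that the paper itself silently corrects when it later writes $rd+2-2g=r^3(L^+)^3-rd-({-}K_{X^+})^3$.
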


\begin{proof} The first claim follows from $(r\alpha^+ - 1)(-K_X) = \tilde{E}^+ + rL $ together with 
$ d + 2-2g  = K_{X^+}^2 \cdot E^+ = K_X^2 \cdot \tilde{E}^+$ and $K_X^2 \cdot L = 12 - \tau.$

\noindent Using $12-\tau \le 11$ and $rd + 2-2g = r^3(L^+)^3-rd
-({-}K_{X^+})^3$ in the above equation gives 
 \[\alpha^+ \le \frac{r^2(L^+)^3 + 10}{({-}K_{X^+})^3} \le \frac{r^2(L^+)^3 + 10}{4}.\] 
To see the last inequality note that $X'$ cannot by hyperelliptic by
Corollary~\ref{hypsymm}. Hence $({-}K_{X^+})^3 \ge 4$. Now the bounds
for $r$ and $(L^+)^3$ from Iskovskikh's list prove $\alpha^+ \le 8$.
\end{proof} 

Putting things together numerical computer calculations (written in C) show

\begin{proposition} \label{complist}
$X$ and $X^+$ have the following invariants. 

\begin{center} \begin{tabular}{c|c|c|c|c|c|c|c|}
 No. & $({-}K_X)^3$ & $\alpha^+$ & $r$ & $(L^+)^3$ & $d$ & $g$ &
 $\tau$\\\hline  
 1 & $4$ & $3$ & $1$ & $8$ & $1$ & $0$ & $7$\\
 2 & $4$ & $4$ & $1$ & $18$ & $8$ & $2$ & $6$\\
 3 & $4$ & $5$ & $1$ & $22$ & $10$ & $2$ & $4$\\
 4 & $4$ & $3$ & $2$ & $4$ & $9$ & $5$ & $7$\\
 5 & $4$ & $4$ & $2$ & $5$ & $11$ & $5$ & $5$\\
 6 & $6$ & $3$ & $1$ & $14$ & $3$ & $0$ & $5$\\
 7 & $6$ & $2$ & $2$ & $3$ & $5$ & $2$ & $7$\\
 8 & $6$ & $3$ & $2$ & $4$ & $6$ & $0$ & $4$\\
 9 & $6$ & $2$ & $3$ & $2$ & $11$ & $10$ & $7$\\
 10 & $8$ & $2$ & $3$ & $2$ & $9$ & $5$ & $5$\\
 11 & $10$ & $2$ & $1$ & $14$ & $1$ & $0$ & $5$\\
 12 & $10$ & $2$ & $2$ & $5$ & $8$ & $2$ & $4$\\
 13 & $10$ & $2$ & $3$ & $2$ & $7$ & $0$ & $3$\\
 14 & $12$ & $2$ & $1$ & $18$ & $2$ & $0$ & $4$\\
 15 & $14$ & $2$ & $1$ & $22$ & $3$ & $0$ & $3$\\
 16 & $18$ & $1$ & $4$ & $1$ & $6$ & $2$ & $4$\\
 17 & $22$ & $1$ & $3$ & $2$ & $5$ & $0$ & $3$\\\hline
\end{tabular}
\end{center}
\end{proposition}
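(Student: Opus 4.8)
The plan is to feed the numerical relations proved in the preceding lemmas into a finite enumeration and read off the table. Recall the standing assumptions in force here: $\phi\colon X\to\bP_2$ is a proper conic bundle, so $\tau=\deg\Delta\ge 1$, and $\dim Y^+=3$; by Theorem~\ref{conicconic}, Proposition~\ref{p1}, the lemma ruling out the blow-up of a smooth point, and Proposition~\ref{conicpt} (which disposes of the subcase that $E^+$ contracts to a point), we are left with $\phi^+$ the blow-up of a smooth Fano threefold $Y^+$ of Picard number one along a smooth curve $C^+$ of degree $d=H^+\cdot C^+$ and genus $g$. The Corollary above gives $\beta=\beta^+=-1$, $\alpha^+=\alpha\in\bN$, and, since $E^+$ is ruled,
\[
\tilde E^+=(r\alpha^+-1)(-K_X)-rL .
\]

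First I would bound all the parameters. The unknowns are the pair $(r,(L^+)^3)$ with $1\le r\le 4$, and the integers $\alpha^+\ge 1$, $\tau\ge 1$, $d\ge 1$, $g\ge 0$. By Iskovskikh's classification of Fano threefolds of Picard number one, the admissible pairs $(r,(L^+)^3)$ form a finite list: $(4,1)$ for $Y^+=\bP_3$, $(3,2)$ for $Y^+=Q_3$, and for $r\le 2$ finitely many pairs with $(L^+)^3\le 22$. The lemma immediately before~\ref{complist} gives $\alpha^+\le 8$. Since $-K_X$ and $L$ are nef with $(-K_X)^2\cdot L=12-\tau$, we get $\tau\le 12$. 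Finally $(-K_X)^3=(-K_{X^+})^3=r^3(L^+)^3-2rd+2g-2$, together with the bound $(-K_X)^3\le 22$ (index~$1$, Proposition~\ref{antmodel} and the classification) and $\sigma:=rd+2-2g>0$ (since $-K_{X^+}|_{E^+}$ is big and nef), bounds $d$ and $g$ in terms of $(r,(L^+)^3)$. So the parameter space is finite.

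Next I would impose every proved relation. These are the two blow-up identities
\[
(r\alpha^+-1)(-K_X)^3=d+2-2g+r(12-\tau),\qquad
(r\alpha^+-1)\bigl(rd+2-2g-r(12-\tau)\bigr)=(2g-2)-2r^2
\]
(from the lemma immediately above and from Lemma~\ref{absch2}(3)), the cubic inequality of Lemma~\ref{absch3}(3), the Riemann--Roch bound $\chi(\sO_X(\tilde E^+))\le 1$ from the lemma preceding~\ref{absch2}, together with $4\le(-K_X)^3\le 22$ and $(-K_X)^3$ even (the lower bound since $X'$ is not hyperelliptic, Corollary~\ref{hypsymm}) and $\sigma>0$. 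A short program runs through the finitely many sextuples $(r,(L^+)^3,\alpha^+,\tau,d,g)$ and keeps those satisfying all of these; one then checks the survivors are exactly the $17$ rows listed, $\beta=-1$ throughout and every remaining entry determined by the identities above. This is entirely parallel to the del Pezzo fibration enumeration carried out in Proposition~\ref{dpcurve}.

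The main difficulty is not located in any single step but in arranging that the constraints cut the space down to exactly these $17$ cases rather than a long unwieldy list; the indispensable ingredient that makes the search finite at all is the bound $\alpha^+\le 8$, since without it the two blow-up identities already admit infinite families. A point needing care is that those identities are not independent of the Iskovskikh data: the correct finite list of pairs $(r,(L^+)^3)$ must be built into the search, and one must verify that no sextuple whose value of $(L^+)^3$ fails to come from an actual Fano threefold of Picard number one slips through the purely numerical sieve. Once the bounds and the list are in place the enumeration is routine, if tedious.
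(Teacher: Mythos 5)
Your proposal matches the paper's own proof: the authors likewise assemble the identities $(r\alpha^+-1)(-K_X)^3=d+2-2g+r(12-\tau)$, Lemma~\ref{absch2}(3), the cubic inequality of Lemma~\ref{absch3}(3), the Euler-characteristic bound $\chi(\sO_X(\tilde E^+))\le 1$, the bound $\alpha^+\le 8$, the degree formula $(-K_X)^3=r^3(L^+)^3-2rd+2g-2$, and the Iskovskikh bounds on $(r,(L^+)^3)$, and then run a computer search (written in C) to arrive at exactly these $17$ rows. Same constraints, same finite enumeration, same reliance on a machine check for the final sieve.
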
 

\begin{theorem} \label{coniccurve}
If $\phi^+$ is divisorial contracting $E^+$ to a curve, then $X$ is one of 1,3,5,6,8,10,11,12,13,14,15,16,17 in Proposition~\ref{complist}.
\end{theorem}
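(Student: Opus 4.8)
The plan is to invoke Proposition~\ref{complist}, which already restricts $X$ to one of the seventeen numerical types listed there, and then to eliminate the four types Nos.~2, 4, 7 and 9; the remaining thirteen are exactly the ones in the statement. Throughout, $\phi^+$ presents $X^+$ as the blow--up $\Bl_{C^+}(Y^+)$ of a smooth Fano threefold $Y^+$ of index $r$ along a smooth curve $C^+\subset Y^+$ of degree $d=H^+\cdot C^+$ and genus $g$, with $-K_{X^+}=rL^+-E^+$ big and nef but not ample. The key structural fact is this: since $-K_{X^+}$ is nef, for every irreducible curve $\Gamma\subset Y^+$ not contained in $C^+$ the strict transform satisfies $-K_{X^+}\cdot\tilde\Gamma=-K_{Y^+}\cdot\Gamma-E^+\cdot\tilde\Gamma\ge 0$, so that every line $\ell\subset Y^+$ meets $C^+$ in at most $-K_{Y^+}\cdot\ell$ points (counted with multiplicity); moreover $\psi^+$ small forbids an infinite family of curves on which $-K_{X^+}$ vanishes.

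For each of the four rows I would read off $Y^+$ (the cubic threefold $V_{2,3}$ for No.~7, the del Pezzo threefold $V_{2,4}$ for No.~4, the quadric threefold in $\PN_4$ for No.~9, and the Fano threefold of index $1$ with $({-}K_{Y^+})^3=18$ for No.~2), embed $Y^+$ by $|H^+|$, and compute $h^0(C^+,\sO(1)|_{C^+})$ by Riemann--Roch. In Nos.~7, 4, 9 this number is strictly smaller than the number of independent linear forms on the ambient projective space, so $C^+$ is linearly degenerate (in No.~9, where $\sO(1)|_{C^+}$ is special, one instead reads off from $h^0(\sI_{C^+/\PN_4}(2))\ge 2$ that $C^+$ lies on a second quadric). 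In each case this places $C^+$ on a proper linear or quadratic section $S$ of $Y^+$ — a cubic surface for No.~7, a quartic del Pezzo surface for Nos.~4 and 9 — and adjunction on $S$, namely $2g-2=(C^+)^2+K_S\cdot C^+$ together with $({-}K_S)\cdot C^+=H^+\cdot C^+$, determines the class of $C^+$ in $\Pic(S)$. A finite check against the lines of $S$ (the $27$ lines of a cubic surface, or the $16$ of a quartic del Pezzo surface, or their specializations) then always produces a line $\ell\subset S\subset Y^+$ with $C^+\cdot\ell>-K_{Y^+}\cdot\ell$; hence $-K_{X^+}\cdot\tilde\ell<0$, contradicting that $X^+$ is almost Fano. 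This disposes of Nos.~4, 7 and 9.

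Case No.~2 would be handled in the same spirit — the genus $2$ curve $C^+$ of degree $8$ is again forced to span only a $\PN_6$ and hence to lie on a small linear section of $Y^+$ — but the geometry of the low--dimensional linear sections of the genus $10$ Fano threefold is subtler, so I expect that here one either argues on such a section as above, or replaces the argument by a direct cohomological computation, comparing the values $h^0(X^+,-K_{X^+})=5$, $h^0(X^+,L^+)=h^0(Y^+,H^+)=12$ and the rigidity $h^0(X^+,E^+)=1$ (equivalently the dimensions of the systems $|\sI_{C^+}(mH^+)|$ and their symbolic twists on $Y^+$) with what Riemann--Roch on $Y^+$ and on $C^+$ predict, exactly in the style of the exclusions of Nos.~7, 9 and 24 in the proof of Proposition~\ref{dpcurve}. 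The surviving thirteen types are then the assertion; those among them that are actually realized are obtained by reversing the construction — fixing $Y^+$, choosing $C^+$ of the prescribed class inside a suitable surface section of $Y^+$, and checking the relevant linear systems — just as in Proposition~\ref{dpcurve}, and they are collected in the table in the appendix.

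The main obstacle I anticipate is twofold. First, one must ensure that the surface section $S$ of $Y^+$ containing $C^+$ can be taken smooth, and otherwise carry the adjunction and the line count over to its minimal resolution, verifying that the rational double points do not absorb the offending line. Second, Case No.~2 is the genuinely delicate one, since there $Y^+$ is not defined by linear forms in low codimension, so one must either describe its proper linear sections explicitly or commit to the cohomological route sketched above.
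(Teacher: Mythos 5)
You correctly reduce the theorem to excluding rows 2, 4, 7 and 9 of Proposition~\ref{complist}, and your first step for Nos.~4 and 7 (the section count forcing $C^+$ to be linearly degenerate) and for No.~9 (the count $h^0(\sI_{C^+/\PN_4}(2))\ge 2$) agrees with the paper's starting point. But from there your route diverges and leaves real gaps. The paper never descends to a surface section: it converts degeneracy directly into a contradiction between two computable dimensions, using the conic bundle structure on $X$, which pins down $h^0(X,mL)=\binom{m+2}{2}$. For No.~7 one has $\tilde L=({-}K_{X^+})+(L^+-E^+)$, so $h^0(L^+-E^+)>0$ would force $h^0(L)\ge h^0({-}K_X)=6$, against $h^0(L)=3$; for No.~9 the same mechanism proves $H^0(X^+,2L^+-E^+)=0$, which flatly contradicts the very fact you observe, namely that $C^+$ lies on at least two quadrics (equivalently $h^0(C^+,\sO(2)|_{C^+})\ge 14$ versus the Riemann--Roch value $13$). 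Your detour through a surface $S$ and its lines can probably be pushed through for Nos.~4, 7, 9, but as written it rests on two unverified points that you only flag: the class of $C^+$ in $\Pic(S)$ is not determined by $(d,g)$, so one must enumerate \emph{all} effective classes with the given invariants and check each for a bad line (for No.~9 on the quartic del Pezzo this is a nontrivial computation over all $a\ge 8$ with $b_i\le 3$), and one must separately handle singular, non-normal or cone-type sections $S$ (for $S=Q_3\cap Q'$ with $Q'$ merely \emph{some} quadric through $C^+$, smoothness is far from automatic).

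The decisive gap is No.~2, for which you give no argument at all. Worse, the fallback you sketch cannot work as stated: here $Y^+$ has $({-}K_{Y^+})^3=18$ and is anticanonically embedded in $\PN_{11}$, and $h^0(C^+,\sO(1)|_{C^+})=7$ only places $C^+$ inside a $\PN_6$, whose intersection with the threefold $Y^+$ is a curve or a finite set --- there is no surface section of $Y^+$ to run adjunction and a line count on. The paper's exclusion of No.~2 is of a completely different nature: one computes $\chi(\sO_X(\tilde E^+))=1$, kills the higher cohomology by Leray, deduces $R^1\psi^+_*\sO_{X^+}(E^+)=0$ and hence $E^+\cdot l=1$ on every flopped curve, so that $\psi^+|_{E^+}$ is an isomorphism onto a \emph{smooth} surface of degree $6$ inside the quartic $X'_4\subset\PN_4$, which contradicts the double point formula. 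Some argument of this kind (or a completed version of your cohomological comparison) is indispensable; without it the theorem is not proved.
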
 

\begin{proof} We will go case by case through the list in Proposition~\ref{complist}.

\vspace{0.3cm}

\noindent {\bf No.1:} This is classical and can be found in \cite{AG5}, so the existence is clear.

\vspace{0.3cm}

\noindent {\bf No.2:} Here we compute 
$$ \chi(\sO_X(\tilde{E}^+)) = 1.$$ 
By Leray spectral sequence arguments we have
$$ H^2(\sO_X(\tilde{E}^+)) = H^2(\sO_{X'}(E')) = H^2(\sO_{X^+}(E^+)) = 0.$$ 
Since $H^3(\sO_X(\tilde{E}^+)) = 0$ anyway, it follows
$$ H^1(\sO_X(\tilde{E}^+)) = 0.$$ 
Again the Leray spectral sequence gives 
$$ R^1\psi_*(\sO_X(\tilde{E}^+)) = 0$$
so that $E^+ \cdot l = 1$ for all curves contracted by $\psi^+.$ Hence $\psi^+ \vert {E^+}$ is
biholomorphic and thus $E' = \psi^+(E^+) $ is a smooth surface in $X' \subset \bP_4$ (recall $(-K_X)^3 = 4$). 
Moreover we know that $E' \subset \bP_4$ has degree $6.$ This contradicts e.g. the double point formula. 

\vspace{0.3cm}

\noindent {\bf No.3:} Open. 

\vspace{0.3cm}

\noindent {\bf No.4:} This is parallel to Case 7 below: again
$C^+$ is degenerate, now in $\bP_5$ and the contradiction is the same.  

\vspace{0.3cm}

\noindent {\bf No.5:} Open.

\vspace{0.3cm}

\noindent {\bf No.6:} This is classical as no.1.

\vspace{0.3cm}

\noindent {\bf No.7:} Here $Y^+ \subset \bP_4$ is a cubic. Castelnuovo's bound \cite{ACGH85},p.116 implies that $C^+ \subset \bP_4$
is degenerate, so that 
$$ H^0(\sI_{C^+}(1)) \ne 0.$$
Consequently 
$$ H^0(-K_X-E) \ne 0.$$ 
On the other hand, $3(-K_X) = E + 2L,$ hence 
$$ h^0(-3K_X - E) = 6.$$ 
This is obviously a contradiction, since $h^0(-2K_X) > 6.$ 

\vspace{0.3cm}

\noindent {\bf No.8:} Open.

\vspace{0.3cm}

\noindent {\bf No.9:} Here $Y^+ = Q_3$ is a quadric. We have $-5K_{X^+} = E^+ + 3\tilde L$ and
$-K_{X^+} = 3L^+ - E^+$. This gives $\tilde L = -K_{X^+} + (2L^+ - E^+)$
on $X^+$. Then $h^0(-K_{X^+}) = 6$ but $h^0(X^+, \tilde L) = 3$
implies 
 \[H^0(X^+, 2L^+ - E^+) = 0.\] 
Now the ideal sequence of $C^+$ in $Y^+ = Q_3$ gives 
 \[0 \lra H^0(Q_3, \sO(2)) \lra H^0(C^+, \sO(2)|_{C^+})\] 
is injective, hence $h^0(C^+, \sO(2)) \ge 14$. On the other hand, $h^0(C^+, \sO_{Q_3}(2)|_C) = 13$ by Riemann--Roch.

\vspace{0.3cm}

\noindent {\bf No.10:} Open.

\vspace{0.3cm}

\noindent {\bf No.11:} Classical as no.1.

\vspace{0.3cm}

\noindent {\bf No.12:} Open.

\vspace{0.3cm}

\noindent {\bf No.13:} Open.

\vspace{0.3cm}

\noindent {\bf No.14:} Classical as no.1.

\vspace{0.3cm}

\noindent {\bf No.15:} Classical as no.1.

\vspace{0.3cm}

\noindent {\bf No.16:} We will give a construction. Let $Y^+ = \PN_3$ and $S \in
|\sO_{\PN_3}(3)|$ a smooth cubic. Write $\pi\colon S \to \PN_2$ the
blowup of $6$ general points in $\PN_2$ and denote the exceptional
curves of $\pi$ by $l_1, \dots, l_6$. Choose
 \[C \in |\pi^*\sO(4) - 2l_1-l_2- \cdots -l_5|\] 
general. Then $C \subset \PN_3$ is a smooth curve of degree
$6$ and genus $2$. Define $X^+ = \Bl_C(\PN_3)$ with exceptional
divisor $E^+$. We want to see $X^+$ is almost Fano.

Let $L^+$ be the pullback of $\sO_{\PN_3}(1)$ and
denote the strict transform of $S$ in $|3L^+ - E^+|$ on $X^+$ again by
$S$. Then $-K_{X^+} = 4L^+-E^+$ and we obtain $({-}K_{X^+})^3 =
18$. Since the restriction map $H^0(X^+, -K_{X^+}) \lra H^0(S,
-K_{X^+}|_S)$ is surjective, it suffices to show that $-K_{X^+}|_S$ is
base point free. We have  
 \[-K_{X^+}|_S = \sO_{\PN_3}(4) \otimes \sO_S(-C) = \pi^*\sO(8)
 -2l_1-3l_2 - \cdots - 3l_5 -4l_6.\]
Intersecting with the $27$ lines on the cubic surface $S$ shows the
claim.

It remains to show that the anticanonical map $\psi^+$ of $X^+$ is
small and that the flop $X$ is a conic bundle with
discriminant locus of degree $4$. Consider
 \[\tilde L := 3L^+-E^+.\] 
Then $|\tilde L|$ is base point free outside $S \in |\tilde L|$, and the restriction map 
 \[0 \lra H^0(X^+, \sO_{X^+}) \lra H^0(X^+, \tilde L) \lra H^0(S, \tilde L|_S) \lra 0\] 
is surjective with onedimensional kernel. On $S$, we have 
 \[\tilde L|_S = \pi^*\sO(5) - l_1 - 2l_2 - \cdots - 2l_5 -3l_6.\] 
Subtracting the unique section $l^+_{\psi} \in |\pi^*\sO(2) - \sum_{i=2}^6 l_i|$ we get $l^+_{\phi} =
\tilde L|_S - l^+_{\psi}$ is a pencil. This shows the base locus of
$\tilde L$ is the single rational curve $l^+_{\psi}$ and $h^0(X^+,
\tilde L) = 3$. 

Flopping $l^+_{\psi}$ we therefore obtain a morphism onto $\PN_2$,
which then must be a conic bundle. The degree of the discriminant
locus may now be computed easily.

\vspace{0.3cm}

\noindent {\bf No.17:} We will give a construction analogously to the last case. Let $Y^+ = Q_3$ be a smooth
quadric and $S \in |\sO_{Q_3}(2)|$ be a general member. Then $S$ is a
smooth del Pezzo surface of degree $4$, hence $\pi\colon S \to
\PN_2$ is the blowup of $5$ general points. Denote the exceptional
curves of $\pi$ by $l_1, \dots, l_5$ as above. Choose
 \[C \in |\pi^*\sO(2) - l_1|\] 
general. Then $C \subset Q_3$ is a smooth rational curve of degree
$5$. Define $X^+ = \Bl_C(Q_3)$ with exceptional divisor $E^+$. Denote
the strict transform of $S$ again by $S$ and the pullback of
$\sO_{Q_3}(1)$ by $L^+$. Then
$({-}K_{X^+})^3 = 22$ and
 \[-K_{X^+}|_S = \sO_{Q_3}(3)|_S \otimes \sO_S(-C) = \pi^*\sO(7) -2l_1-3l_2 - \cdots - 3l_5.\]
This implies $-K_{X^+}$ nef since the points are general.

Finally consider $\tilde L := 2L^+-E^+$. Then 
 \[\tilde L|_S = \pi^*\sO(4) - l_1 - 2l_2 - \cdots - 2l_5\] 
and substracting the unique section $l^+_{\psi} \in |\pi^*sO(2) - \sum l_i|$ we get $l^+_{\phi} =
\tilde L|_S - l^+_{\psi}$ is a pencil. This shows the base locus of
$\tilde L$ is $l^+_{\psi}$ and $h^0(X^+, \tilde L) = 3$. This
completes the construction as in the last case.
\end{proof} 


\begin{appendix}

\section{Tables}

Some general notation: let $-K_X = r_X H$ and $-K_{X^+} = r_{X^+}H^+$. For the vector bundle $\bigoplus_{i=1}^m\sO_{\PN_1}(a_i)^{\oplus d_i}$ on $\PN_1$ write $(a_1^{d_1}, \dots, a_m^{d_m})$ for short. A ``+'' in the last column indicates that the case exists, a ``?'' means the existence is unknown.

\subsection{Del Pezzo fibration -- Del Pezzo fibration} \label{tdpdp}

Assume $X$ and $X^+$ admit del Pezzo fibrations with general fiber $F$ and $F^+$, respectively. Let $\tilde{F}^+$ be the strict transform of $F^+$ under the flopping map. Let $\sE = \phi_*H$, $\sE^+ = \phi^+_*H^+$  and assume
 \[\tilde{F}^+  = \alpha H +\beta F.\]
We find in any case that $\sE$ and $\sE^+$ are of the same type. Let $\lambda$ be the maximal integer such that
 \[H^0(X, H-\lambda F) \not= 0.\]

\begin{center}
\renewcommand{\arraystretch}{1.3}
\begin{tabular}{c|c|c|c|c|c|c|c|c|c|c|}
No. & $({-}K_X)^3$ & $r_X$ & $K_F^2$ & $K_{F^+}^2$ & $\sE/\sE^+$ & $\alpha$ & $\beta$ & $\lambda$ & Ref. & $\exists$\\\hline
$1$ & $54$ & $3$ & $9$ & $9$ & $(0,1^2)$ & $1$ & $-1$ & $1$ & \ref{highindex} & +\\\hline
$2$ & $32$ & $2$ & $8$ & $8$ & $(0^2,1^2)$ & $1$ & $-1$ & $1$ & \ref{index2}, (1.iii) & +\\\hline
$3$ & $16$ & $2$ & $8$ & $8$ & $(0^4)$ & $2$ & $-1$ & $0$ & \ref{index2}, (1.i) & +\\\hline
$4$ & $16$ & $1$ & $8$ & $4$ & $(0, 1^2, 2^2)$ & $\frac{1}{2}$ & $-\frac{1}{2}$ & $2$ & \ref{beta} & +\\\hline
$5$ & $12$ & $1$ & $6$ & $6$ & $(0^5, 1^2)$ & $1$ & $-1$ & $1$ & \ref{dpdp}, I & ?\\\hline
$6$ & $10$ & $1$ & $5$ & $5$ & $(0^4, 1^2)$ & $1$ & $-1$ & $1$ & \ref{dpdp}, I & ?\\\hline
$7$ & $8$ & $1$ & $4$ & $4$ & $(0^3, 1^2)$ & $1$ & $-1$ & $1$ & \ref{dpdp}, I & +\\\hline
$8$ & $6$ & $1$ & $6$ & $6$ & $({-}1, 0^6)$ & $2$ & $-1$ & $0$ & \ref{dpdp}, II & ?\\\hline
$9$ & $6$ & $1$ & $3$ & $3$ & $(0^2, 1^2)$ & $1$ & $-1$ & $1$ & \ref{dpdp}, I & +\\\hline 
$10$ & $4$ & $1$ & $6$ & $6$ & $({-}1^2, 0^5)$ & $3$ & $-1$ & $0$ & \ref{dpdp}, II & ?\\\hline
$11$ & $4$ & $1$ & $4$ & $4$ & $(0^5)$ & $2$ & $-1$ & $0$ & \ref{dpdp}, II & +\\\hline
$12$ & $4$ & $1$ & $2$ & $2$ & $(0, 1^2)$ & $1$ & $-1$ & $1$ & \ref{dpdp}, I & +\\\hline
$13$ & $2$ & $1$ & $6$ & $6$ & $({-}1^3, 0^4)$ & $6$ & $-1$ & $0$ & \ref{dpdp}, II & ?\\\hline
$14$ & $2$ & $1$ & $5$ & $5$ & $({-}1^2, 0^4)$ & $5$ & $-1$ & $0$ & \ref{dpdp}, II & ?\\\hline
$15$ & $2$ & $1$ & $4$ & $4$ & $({-}1, 0^4)$ & $4$ & $-1$ & $0$ & \ref{dpdp}, II & +\\\hline
$16$ & $2$ & $1$ & $3$ & $3$ & $(0^4)$ & $3$ & $-1$ & $0$ & \ref{dpdp}, II & +\\\hline 
$17$ & $2$ & $1$ & $1$ & $1$ & $(1^2)$ & $1$ & $-1$ & $1$ & \ref{basepoints} & +\\\hline
\end{tabular}
\end{center}

\

\subsection{Del Pezzo fibration -- Conic bundle} \label{tdelpezzoconic}

Assume $X \to \PN_1$ admits a del Pezzo fibration with general fiber $F$ and $X^+ \to \PN_2$ is a conic bundle with discriminant locus of degree $\tau$. Let $L^+$ be the pullback of $\sO_{\PN_2}(1)$ and $\tilde{L}^+$ the strict transforms under the flopping map. Then we find in any case
 \[\tilde{L}^+  = H -F.\]
Let $\sE = \phi_*H$ and $\sF^+ = \phi^+_*H^+$ with Chern classes $c_i=c_i(\sF^+)$.

\begin{center}
\renewcommand{\arraystretch}{1.3}
\begin{tabular}{c|c|c|c|c|c|c|c|c|}
No. & $({-}K_X)^3$ & $r_X$ & $K_F^2$ & $\sE$ & $\tau$ & $(c_1,c_2)$ & Ref. & $\exists$\\\hline
$1$ & $40$ & $2$ & $8$ & $(0, 1^3)$ & $0$ & $(3,4)$ & \ref{index2}, (1.iv) & +\\\hline
$2$ & $14$ & $1$ & $6$ & $(0^4, 1^3)$ & $4$ & $(5,13)$ & \ref{delpezzoconic} & ?\\\hline
$3$ & $12$ & $1$ & $5$ & $(0^3, 1^3)$ & $5$ & $(4,8)$ & \ref{delpezzoconic} & ?\\\hline
$4$ & $10$ & $1$ & $4$ & $(0^2, 1^3)$ & $6$ & $(3,4)$ & \ref{delpezzoconic} & +\\\hline
$5$ & $8$ & $1$ & $3$ & $(0, 1^3)$ & $7$ & $(2,1)$ & \ref{delpezzoconic} & +\\\hline
\end{tabular}
\end{center}

\

\subsection{Del Pezzo fibration -- Birational contraction with $\dim(\phi^+(E^+)) = 0$} \label{tdppoint}

Assume $X \to \PN_1$ admits a del Pezzo fibration with general fiber $F$ and $X^+ \to Y^+$ is a birational contraction with exceptional divisor $E^+$ contracted to a point. Then $Y^+$ is a (possibly singular) Fano threefold with $\rho(Y^+) = 1$. Let $L^+$ be the pullback of the generator of $\Pic(Y^+)$ and $\tilde{L}^+$ the strict transforms under the flopping map. Then we find in any case
 \[\tilde{L}^+  = H -F.\]
Let $\sE = \phi_*H$.

\begin{center}
\renewcommand{\arraystretch}{1.3}
\begin{tabular}{c|c|c|c|c|c|c|c|c|}
No. & $({-}K_X)^3$ & $r_X$ & $K_F^2$ & $\sE$ & $({-}K_{Y^+})^3$ & $(E^+, E^+|_{E^+})$ & Ref. & $\exists$\\\hline
$1$ & $24$ & $2$ & $8$ & $(0^3,1)$ & $32$ & $(\PN_2, \sO({-}1))$ & \ref{index2}, (1.ii) & +\\\hline
$2$ & $10$ & $1$ & $6$ & $(0^6,1)$ & $18$ & $(\PN_2, \sO({-}1))$ & \ref{blowupsmooth} & +\\\hline
$3$ & $6$ & $1$ & $4$ & $(0^4,1)$ & $8$ & $(Q, \sO({-}1))$ & \ref{blowupq} & +\\\hline
$4$ & $4$ & $1$ & $3$ & $(0^3,1)$ & $\frac{9}{2}$ & $(\PN_2, \sO({-}2))$ & \ref{blowupsing} & +\\\hline
\end{tabular}
\end{center}

\

\subsection{Del Pezzo fibration -- Birational contraction with $\dim(\phi^+(E^+)) = 1$} \label{tdpcurve}

Assume $X \to \PN_1$ admits a del Pezzo fibration with general fiber $F$ and $X^+ \to Y^+$ is a birational contraction with exceptional divisor $E^+$ contracted to a smooth curve of degree $d$ and genus $g$. Then $r_X = 1$ and $Y^+$ is a smooth Fano threefold of index $r_{Y^+}$ with $\rho(Y^+) = 1$. Let $\tilde{E}^+$ be the strict transform of $E^+$ under the flopping map. Assume
 \[\tilde{E}^+  = \alpha H + \beta F.\]
The number in the column ``Ref.'' refers to the table in Proposition~\ref{dpcurve}.

\begin{center}
\renewcommand{\arraystretch}{1.3}
\begin{tabular}{c|c|c|c|c|c|c|c|c|c|c|}
No. & $({-}K_X)^3$ & $K_F^2$ & $r_{Y^+}$ & $({-}K_{Y^+})^3$ & $g$ & $d$ & $\alpha$ & $\beta$ & Ref. & $\exists$\\\hline
$1$ & $22$ & $6$ & $2$ & $40$ & $0$ & $4$ & $1$ & $-2$ & \ref{dpcurve} (17) & +\\\hline
$2$ & $18$ & $5$ & $2$ & $32$ & $0$ & $3$ & $1$ & $-2$ & \ref{dpcurve} (16) & +\\\hline
$3$ & $18$ & $6$ & $3$ & $54$ & $1$ & $6$ & $2$ & $-3$ & \ref{dpcurve} (21) & +\\\hline
$4$ & $16$ & $6$ & $4$ & $64$ & $1$ & $6$ & $3$ & $-4$ & \ref{dpcurve} (29) & +\\\hline
$5$ & $16$ & $5$ & $3$ & $54$ & $3$ & $7$ & $2$ & $-3$ & \ref{dpcurve} (20) & +\\\hline
$6$ & $14$ & $5$ & $4$ & $64$ & $4$ & $7$ & $3$ & $-4$ & \ref{dpcurve} (28) & +\\\hline
$7$ & $14$ & $4$ & $2$ & $24$ & $0$ & $2$ & $1$ & $-2$ & \ref{dpcurve} (14) & +\\\hline
$8$ & $12$ & $4$ & $4$ & $64$ & $7$ & $8$ & $3$ & $-4$ & \ref{dpcurve} (27) & +\\\hline
$9$ & $10$ & $6$ & $1$ & $16$ & $0$ & $2$ & $1$ & $-1$ & \ref{dpcurve} (4) & +\\\hline
$10$ & $10$ & $3$ & $2$ & $16$ & $0$ & $1$ & $1$ & $-2$ & \ref{dpcurve} (12) & +\\\hline
$11$ & $8$ & $6$ & $2$ & $32$ & $1$ & $6$ & $3$ & $-2$ & \ref{dpcurve} (11) & ?\\\hline
$12$ & $8$ & $5$ & $1$ & $12$ & $0$ & $1$ & $1$ & $-1$ & \ref{dpcurve} (3) & +\\\hline
$13$ & $6$ & $6$ & $1$ & $18$ & $1$ & $6$ & $2$ & $-1$ & \ref{dpcurve} (2) & ?\\\hline
$14$ & $6$ & $5$ & $4$ & $64$ & $8$ & $9$ & $7$ & $-4$ & \ref{dpcurve}
(25) & ?\\\hline
$15$ & $4$ & $6$ & $1$ & $16$ & $1$ & $6$ & $3$ & $-1$ & \ref{dpcurve} (1) & ?\\\hline
$16$ & $4$ & $5$ & $2$ & $24$ & $1$ & $5$ & $5$ & $-2$ & \ref{dpcurve} (8) & ?\\\hline
$17$ & $4$ & $5$ & $3$ & $54$ & $9$ & $11$ & $8$ & $-3$ & \ref{dpcurve} (18) & ?\\\hline
\end{tabular}
\end{center}

\

\subsection{Conic bundle -- Conic bundle} \label{tcbcb}

Assume $X \to \PN_2$ and $X^+ \to \PN_2$ both are conic bundles. Let $\tau$ and $\tau^+$ be the degree of their discriminant loci. By Proposition~\ref{conicconic} then $\tau = \tau^+ = 0$ and $r_X = 2$. This case was treated in \cite{delpezzo}; we obtain two completely symmetric cases: denote $\sF = \phi_*H$ with Chern classes $c_i = c_i(\sF)$ and $\sF^+ = \phi^+_*H^+$ with Chern classes $c_i^+=c_i(\sF^+)$. Then $c_i = c_i^+$.

\begin{center}
\renewcommand{\arraystretch}{1.3}
\begin{tabular}{c|c|c|c|c|c|c|}
No. & $({-}K_X)^3$ & $r_X$ & $\tau$ & $(c_1, c_2)$ & Ref. & $\exists$\\\hline
$1$ & $24$ & $2$ & $0$ & $(3,6)$ & \ref{index2}, (2.iii) & +\\\hline
$2$ & $16$ & $2$ & $0$ & $(3,7)$ & \ref{index2}, (2.iv) & +\\\hline
\end{tabular}
\end{center}

\

\subsection{Conic bundle -- Birational contraction with $\dim(\phi^+(E^+)) = 0$} \label{tconicpt}

Assume $X \to \PN_2$ is a conic bundle, $\tau$ the degree of the discriminant locus. Denote $\sF = \phi_*H$ with Chern classes $c_i = c_i(\sF)$ and $L = \phi^*\sO_{\PN_2}(1)$. Assume $X^+ \to Y^+$ is a birational contraction with exceptional divisor $E^+$ contracted to a point. Then $Y^+$ is a (possibly singular) Fano threefold with $\rho(Y^+) = 1$. Let $L^+$ be the pullback of the generator of $\Pic(Y^+)$ and $\tilde{L}^+$ the strict transform under the flopping map. Assume
 \[\tilde{L}^+  = \alpha H + \beta L.\]

\begin{center}
\renewcommand{\arraystretch}{1.3}
\begin{tabular}{c|c|c|c|c|c|c|c|c|c|c|}
No. & $({-}K_X)^3$ & $r_X$ & $\tau$ & $(c_1, c_2)$ & $({-}K_{Y^+})^3$ & $(E^+, E^+|_{E^+})$ & $\alpha$ & $\beta$ & Ref. & $\exists$\\\hline
$1$ & $32$ & $2$ & $0$ & $(3,5)$ & $40$ & $(\PN_2, \sO(-1))$ & $2$ & $-1$ & \ref{index2}, (2.ii) & +\\\hline
$2$ & $8$ & $1$ & $6$ & $(1,0)$ & $10$ & $(Q, \sO(-1))$ & $2$ & $-1$ & \ref{conicpt} & +\\\hline
$3$ & $6$ & $1$ & $7$ & $(1,0)$ & $52$ & $(\PN_2, \sO(-2))$ & $3$ & $-1$ & \ref{conicpt} & +\\\hline
\end{tabular}
\end{center}

\

\subsection{Conic bundle -- Birational contraction with $\dim(\phi^+(E^+)) = 1$} \label{tconicurve}

Assume $X \to \PN_2$ is a conic bundle, $\tau$ the degree of the discriminant locus. Denote $L = \phi^*\sO_{\PN_2}(1)$. Assume $X^+ \to Y^+$ is a birational contraction with exceptional divisor $E^+$ contracted to a smooth curve of degree $d$ and genus $g$. Then $r_X = 1$ and $Y^+$ is a smooth Fano threefold of index $r_{Y^+}$ with $\rho(Y^+) = 1$. Let $\tilde{L}^+$ be the strict transform of $L^+$, the pullback of the generator of $\Pic(Y^+)$, under the flopping map. Then
 \[\tilde{L}^+  = \alpha H  - L.\]
The number in the column ``Ref.'' refers to the table in Proposition~\ref{coniccurve}.

\begin{center}
\renewcommand{\arraystretch}{1.3}
\begin{tabular}{c|c|c|c|c|c|c|c|c|c|}
No. & $({-}K_X)^3$ & $\tau$ & $({-}K_{Y^+})^3$ & $r_{Y^+}$ & $d$ & $g$ & $\alpha$ & Ref. & $\exists$\\\hline
$1$ & $22$ & $3$ & $54$ & $3$ & $5$ & $0$ & $1$ & \ref{coniccurve} (17) & +\\\hline
$2$ & $18$ & $4$ & $64$ & $4$ & $6$ & $2$ & $1$ & \ref{coniccurve} (16) & +\\\hline
$3$ & $14$ & $3$ & $22$ & $1$ & $3$ & $0$ & $2$ & \ref{coniccurve} (15) & +\\\hline
$4$ & $12$ & $4$ & $18$ & $1$ & $2$ & $0$ & $2$ & \ref{coniccurve} (14) & +\\\hline
$5$ & $10$ & $5$ & $14$ & $1$ & $1$ & $0$ & $2$ & \ref{coniccurve} (11) & +\\\hline
$6$ & $10$ & $4$ & $40$ & $2$ & $8$ & $2$ & $2$ & \ref{coniccurve} (12) & ?\\\hline
$7$ & $10$ & $3$ & $24$ & $2$ & $7$ & $0$ & $2$ & \ref{coniccurve} (13) & ?\\\hline
$8$ & $8$ & $5$ & $54$ & $3$ & $9$ & $5$ & $2$ & \ref{coniccurve} (10) & ?\\\hline
$9$ & $6$ & $5$ & $14$ & $1$ & $3$ & $0$ & $3$ & \ref{coniccurve} (6) & +\\\hline
$10$ & $6$ & $4$ & $32$ & $2$ & $6$ & $0$ & $3$ & \ref{coniccurve} (8) & ?\\\hline
$11$ & $4$ & $7$ & $8$ & $1$ & $1$ & $0$ & $3$ & \ref{coniccurve} (1) & +\\\hline
$12$ & $4$ & $5$ & $40$ & $2$ & $11$ & $5$ & $4$ & \ref{coniccurve} (5) & ?\\\hline
$13$ & $4$ & $4$ & $22$ & $1$ & $10$ & $2$ & $5$ & \ref{coniccurve} (3) & ?\\\hline
\end{tabular}
\end{center}

\

\end{appendix}



\begin{thebibliography}{JPR05}
\bibitem[ACGH85]{ACGH85} E. Arbarello, M. Cornalba, P.A. Griffiths, J. Harris: Geometry of algebraic curves. Springer 1985. 
\bibitem[BS95]{BS95} M. Beltrametti, A.J. Sommese: The adjunction theory of complex projective varieties. de Gruyter 1995.
\bibitem[Be07]{Bertini} E. Bertini: Introduzione alla geometria
  proiettiva degli iperspazi. E. Spoerri, Pisa, 1907.
\bibitem[Ch99]{Ch99} I. Cheltsov: Three-Dimensional Fano Varieties of
    Integer Index. Math. Notes {\bf 66}, 360-365 (1999).
\bibitem[CSP05]{Cheltsov} I. Cheltsov, C. Shramov, V. Przyjalkowski:
  Hyperelliptic and trigonal Fano threefolds. Izv. Math. {\bf 69}, No.2, 365-421 (2005).
\bibitem[Fu90]{Fu90} T. Fujita: Classification theories of polarized varieties. London Math. Soc. Lect. Notes Ser. {\bf 155} (1990).
\bibitem[Ha87]{Ha87} R. Hartshorne: On the classification of algebraic space curves. II. Proc. Symp. Pure Math. {\bf 46}, No.1, 145-164 (1987). 
\bibitem[Isk78]{Isk} V.A. Iskovskikh: Fano $3$-folds I, II. Math. USSR, Izv. {\bf 11}, 485-527 (1977); {\bf 12}, 469-506 (1978).
\bibitem[Isk80]{Iskantican} V.A. Iskovskikh: Anticanonical models of three-dimensional algebraic varieties. J. Soviet Math. {\bf 13}, 745-814 (1980).
\bibitem[Isk89]{Isklines} V.A. Iskovskikh: Double projection from a line
  onto Fano threefolds of the first kind. (Eng. Transl) Math. USSR
  Sb. {\bf 66}, 265-284 (1990). 
\bibitem[IP99]{AG5} V.A. Iskovskikh, Yu.G. Prokhorov: Algebraic
  Geometry V: Fano varieties. Springer 1999.
\bibitem[JP06]{delpezzo} P. Jahnke, T. Peternell: Almost del Pezzo
  manifolds. math.AG/0612516.
\bibitem[JPR05]{JPR} P. Jahnke, T. Peternell, I. Radloff: Threefolds
  with big and nef anticanonical bundles I. Math. Ann. {\bf 333}, No.3, 569-631 (2005).
\bibitem[JR06a]{smoothing} P. Jahnke, I. Radloff: Terminal Fano
    threefolds and their smoothings. math.AG/0610769.
\bibitem[JR06b]{JR} P. Jahnke, I. Radloff: Gorenstein Fano threefolds
  with base points in the anticanonical system. Comp. Math. {\bf 142},
  No.2, 422-432 (2006).
\bibitem[Ko89]{Kollar} J. Koll\'ar: Flops. Nagoya Math. J. {\bf 113},
  15-36 (1989).
\bibitem[Mo82]{Mori} S. Mori: Threefolds whose canonical bundles are
  not numerically effective. Ann. Math. {\bf 116}, 133-176 (1982).
\bibitem[Na97]{Namikawa} Y. Namikawa: Smoothing Fano 3-folds. J. Algebr. Geom. {\bf 6}, 307-324 (1997). 
\bibitem[Shi89]{Shin} Shin, K.-H.: $3$-dimensional Fano Varieties with Canonical Singularities. Tokyo J. Math. {\bf 12}, 375-385 (1989).
\bibitem[Ta89]{Ta89} K. Takeuchi: Some birational maps of Fano
  $3$-folds. Comp. Math. {\bf 71}, 265-284 (1989).
\end{thebibliography}
\end{document}